\let\frak\mathfrak
\let\Bbb\mathbb
\def\>{\relax\ifmmode\mskip.666667\thinmuskip\relax\else\kern.111111em\fi}
\def\<{\relax\ifmmode\mskip-.333333\thinmuskip\relax\else\kern-.0555556em\fi}
\def\vsk#1>{\vskip#1\baselineskip}
\def\vv#1>{\vadjust{\vsk#1>}\ignorespaces}
\def\vvn#1>{\vadjust{\nobreak\vsk#1>\nobreak}\ignorespaces}
\let\Medskip\medskip
\def\medskip{\par\Medskip}
\let\Bigskip\bigskip
\def\bigskip{\par\Bigskip}
\let\Maketitle\maketitle
\def\maketitle{\Maketitle\thispagestyle{empty}\let\maketitle\empty}
\newtheorem{thm}{Theorem}[section]
\newtheorem{cor}[thm]{Corollary}
\newtheorem{lem}[thm]{Lemma}
\newtheorem{prop}[thm]{Proposition}
\numberwithin{equation}{section}
\theoremstyle{definition}
\newtheorem*{rem}{Remark}
\newtheorem*{example}{Example}
\let\mc\mathcal
\let\nc\newcommand
\nc{\on}{\operatorname}
\nc{\Z}{{\mathbb Z}}
\nc{\C}{{\mathbb C}}
\nc{\N}{{\mathbb N}}
\nc{\pone}{{\mathbb C}{\mathbb P}^1}
\nc{\arr}{\rightarrow}
\nc{\larr}{\longrightarrow}
\nc{\al}{\alpha}
\nc{\W}{{\mc W}}
\nc{\la}{\lambda}
\nc{\su}{\widehat{{\mathfrak sl}}_2}
\nc{\g}{{\mathfrak g}}
\nc{\h}{{\mathfrak h}}
\nc{\m}{{\mathfrak m}}
\nc{\n}{{\mathfrak n}}
\nc{\Gm}{\Gamma}
\nc{\La}{\Lambda}
\nc{\gl}{\widehat{\mathfrak{gl}_2}}
\nc{\bi}{\bibitem}
\nc{\om}{\omega}
\nc{\Res}{\on{Res}}
\nc{\gm}{\gamma}
\nc{\Om}{\Omega}
\nc{\yy}{{\bs y}}
\nc{\kk}{{\bs k}}
\def\z{\mathfrak z}
\def\Res{\on{Res}}
\def\Wr{\on{Wr}}
\def\B{{\mc B}}
\def\D{{\mc D}}
\def\F{{\mc F}}
\def\L{{\mc L}}
\def\M{{\mc M}}
\def\V{{\mc V}}
\let\Dl\Delta
\let\si\sigma
\let\Si\Sigma
\let\der\partial
\let\geq\geqslant
\let\leq\leqslant
\nc{\gln}{\mathfrak{gl}_N}
\nc{\sln}{\mathfrak{sl}_N}
\def\beq{\begin{equation}}
\def\eeq{\end{equation}}
\def\be{\begin{equation*}}
\def\ee{\end{equation*}}
\nc{\bean}{\begin{eqnarray}}
\nc{\eean}{\end{eqnarray}}
\nc{\bea}{\begin{eqnarray*}}
\nc{\eea}{\end{eqnarray*}}
\nc{\bs}{\boldsymbol}
\nc{\Ref}[1]{{\rm(\ref{#1})}}
\nc{\R}{\Bbb R}
\nc{\glN}{\mathfrak{gl}_N}
\nc{\glNt}{\mathfrak{gl}_N[t]}
\nc{\s}{sing}
\nc{\Oml}{{\Om_{\bs\la}}}
\nc{\OmLb}{{\Om_{\bs\La,\bs\la,\bs b}}}
\nc{\Ol}{{\mc O_{\bs\la}}}
\nc{\OLb}{{\mc O_{\bs\La,\bs\la,\bs b}}}
\nc{\Ml}{{\mc M_{\bs\la}}}
\nc{\Mlb}{{\mc M_{\bs\La,\bs\la,\bs b}}}
\nc{\Blb}{{\B_{\bs\La,\bs\la,\bs b}}}
\nc{\Omn}{{\Omega_{\bs n,\bs b,\bs K}}}
\nc{\Omlb}{{\bar\Om_{\bs\la}}}
\nc{\VSl}{{(\V^S)_{\bs\la}}}
\nc{\Dlb}{\Dl_{\bs\La,\bs\la,\bs b,\bs K}}
\nc{\ep}{\epsilon}
\nc{\Vn}{{V^{\otimes n}}}
\nc{\Il}{{\mc I_{\bs\la}}}
\nc{\bla}{{\bs\la}}
\nc{\Fla}{\F_{\bs\la}}
\nc{\GL}{{GL_n(\C)}}
\nc{\ga}{\gamma}
\nc{\Ga}{\Gamma}
\def\Bb{{\mc B}}
\nc{\Nn}{{\mc N}}
\nc\Ll{{\mc L}}
\def\Wr{\on{Wr}}
\nc{\PCN}{{   (\C[x])^2   }}
\nc{\slt}{{\frak{sl}_3}}
\nc\ad{{\on{ad}}}
\nc\gA{{\g(A_2^{(1)})}}
\nc\At{{A_2^{(1)}}}
\nc\Dia{{\on{Diag}}}
\nc\AT{{A^{(2)}_2}}
\begin{document}

\hrule width0pt
\vsk->

\title[{Critical points  and the mKdV hierarchy  of type $A^{(2)}_2$}]{Critical points
of master functions and the  mKdV hierarchy of type $A^{(2)}_2$}

\author
[A.\,Varchenko,  T.\,Woodruff, D.\,Wright]
{ A.\,Varchenko$\>^{\star}$,  T.\,Woodruff, D.\,Wright}

\maketitle

\begin{center}
{\it Department of Mathematics, University of North Carolina
at Chapel Hill\\ Chapel Hill, NC 27599-3250, USA\/}
\end{center}

{\let\thefootnote\relax
\footnotetext{\vsk-.8>\noindent
$^\star$\,Supported in part by NSF grant DMS--1101508}}

\medskip
\begin{abstract}

We consider the population of critical points
generated from the  critical point of the master function with no variables, which is
associated with the trivial representation of the affine Lie algebra $\AT$.
We describe how the critical points of this population define
rational solutions of the equations of the mKdV hierarchy associated with $\AT$.

\end{abstract}

{\small \tableofcontents

}

\setcounter{footnote}{0}
\renewcommand{\thefootnote}{\arabic{footnote}}

\section{Introduction}

Let $\g$ be a Kac-Moody algebra with invariant scalar product $(\,,\,)$,
 $\h\subset \g$ Cartan subalgebra,
  $\al_0,\dots,\al_N$ simple roots. Let
$\Lambda_1,\dots,\Lambda_n$ be dominant integral weights,
 $k_0,\dots, k_N$
nonnegative integers, $k=k_0+\dots+k_N$.

Consider $\C^n$ with coordinates $z=(z_1,\dots,z_n)$.
Consider $\C^k$ with coordinates $u$ collected into $N+1$ groups, the $j$-th group consisting of $k_j$ variables,
\bea
u=(u^{(0)},\dots,u^{(N)}),
\qquad
u^{(j)} = (u^{(j)}_1,\dots,u^{(j)}_{k_j}).
\eea
The {\it master function} is the multivalued function on $\C^k\times\C^n$ defined by the formula
\bea
&&
\Phi(u,z) = \sum_{a<b} (\La_a,\La_b) \ln (z_a-z_b)
-  \sum_{a,i,j} (\al_j,\La_a)\ln (u^{(j)}_i-z_a) +
\\
&&
+ \sum_{j< j'} \sum_{i,i'} (\al_j,\al_{j'})
\ln (u^{(j)}_i-u^{(j')}_{i'})
+  \sum_{j} \sum_{i<i'} (\al_j,\al_{j})
\ln (u^{(j)}_i-u^{(j)}_{i'}),
\eea
with singularities at the places where the arguments of the logarithms are equal to zero.

Examples of master functions associated with  $\g=\frak{sl}_2$
were considered by Stieltjes and Heine in 19th century, see \cite{Sz}.
Master functions were introduced in \cite{SV}
to construct integral representations for solutions of the KZ equations, see also \cite{V1, V2}.
The critical points of master functions with respect to $u$-variables were used  to
find eigenvectors in the associated Gaudin models by the Bethe ansatz method,
see   \cite{BF, RV, V3}. In important cases the algebra of functions on the critical set of master functions
is closely related to Schubert calculus, see \cite{MTV}.

In \cite{ScV, MV1} it was observed that the critical points of master functions with respect to the $u$-variables
can be deformed and
form families. Having one critical point, one can construct a family of new critical points. The family is
called a population of critical points. A point of the population is a critical point of the same master function
or of another master function associated with the same $\g,\La_1,\dots,\La_n$ but with  different integer parameters
$k_0,\dots,k_N$. The population is a variety isomorphic to the flag variety of the Kac-Moody algebra $\g^t$ Langlands dual
to $\g$, see \cite{MV1, MV2, F}.

In \cite{VW}, it was discovered that the population originated from the critical point of the master function associated
with the affine Lie algebra $\widehat{\frak{sl}}_{N+1}$ and the parameters $n=0, k_0=\dots=k_N=0$ is connected with the mKdV
integrable hierarchy associated with $\widehat{\frak{sl}}_{N+1}$. Namely, that population can be naturally embedded into the
space of $\widehat{\frak{sl}}_{N+1}$ Miura opers so that the image of the embedding is invariant with respect
to all mKdV flows on the space of Miura opers. For $N=1$, that result follows from  the classical paper by M.\,Adler and J.\,Moser \cite{AM}.

In this  paper we prove the analogous statement  for the twisted affine Lie algebra $\AT$.

\medskip
In Sections \ref{sec KM} and \ref{sec MKDV}, we follow the paper \cite{DS} by V.\,Drinfled and V.\,Sokolov  and review
the Lie algebras of types $\AT$, $\At$ and the associated mKdV hierarchies.

In Section \ref{sec gene} formula  \Ref{Master}, we introduce our master functions associated with $\AT$,
\bean
\phantom{aaa}
\Phi(u; k_0,k_1) =
2  \sum_{i<i'}
\ln (u^{(0)}_i-u^{(0)}_{i'}) + 8 \sum_{i<i'}
\ln (u^{(1)}_i-u^{(1)}_{i'})
- 4 \sum_{i,i'}\ln (u^{(0)}_i-u^{(1)}_{i'}) .
\notag
\eean
  Following \cite{MV1, MV2, VW}, we describe
the generation procedure of new critical points starting from a given one. We define the population of critical points
generated from the critical point of the function with no variables, namely, the function corresponding to the parameters $k_0=k_1=0$.
That population is partitioned into complex cells $\C^m$ labeled by finite sequences $J=(j_1,\dots,i_m)$, $m\geq 0$, of the form
$(0,1,0,1,\dots)$ or $(1,0,1,0,\dots)$. Such sequences are called basic.

In Section \ref{sec cr and Miu}, to every basic sequence $J$ we assign a map $\mu^J : \C^m \to \mc M(\AT)$ of that cell to the space
$\mc M(\AT)$ of Miura opers of type $\AT$. We describe some properties of that map.

In Section \ref{sec Vector fields}, we formulate and prove our main result. Theorem \ref{thm main} says that for any basic sequence,
the variety $\mu^J(\C^m)$ is invariant with respect to all mKdV flows on $\mc M(\AT)$ and that variety is point-wise fixed by all
flows $\frac\der{\der t_r}$ with index $r$ greater than $3m+1$.

In the next papers we plan to extend this result to arbitrary affine Lie algebras.

\section{Kac-Moody algebra of types $A_2^{(2)}$ and $\At$}
\label{sec KM}

In this section we follow \cite[Section 5]{DS}.

\subsection{Kac-Moody algebra of type $A_2^{(2)}$}

\subsubsection{Definition}
\label{sec def}

 Consider the Cartan matrix of type $A_2^{(2)}$,
\bea
\AT= \left(
\begin{matrix}
a_{0,0} & a_{0,1} \\
a_{1,0} & a_{1,1}
\end{matrix}
\right)=\left(
\begin{matrix}
2 & -1 \\
-4 & 2
\end{matrix}
\right) .
\eea
The diagonal matrix $D=\on{diag}(d_0,d_2)=\on{diag}(4,1)$ is such that $B=DA=\left(
\begin{matrix}
8 & -4 \\
-4 & 2
\end{matrix}
\right)$ is symmetric.
The Kac-Moody algebra $\g=\g(A^{(2)}_2)$ {\it of type} $A_2^{(2)}$ is the Lie algebra with {\it canonical generators} $e_i,h_i,f_i\in\g, i=0,1$,
subject to the relations
\bea
[e_i,f_j]=\delta_{i,j}h_i,
\qquad
[h_i, e_j] = a_{i,j} e_j , \qquad
[h_i,  f_j] = - a_{i,j} f_j\,,
\\
(\on{ad} e_i)^{1-a_{i,j}} e_j=0,
\qquad
(\on{ad}  f_i)^{1-a_{i,j}} f_j=0,
\qquad
2h_0 + h_1 = 0,
\eea
see this definition in \cite[Section 5]{DS}. More precisely, we have
\bea
[h_0, e_0] =  2e_0,
\qquad
[h_0, e_1] = - e_1,
&\qquad &
[h_1, e_0] = - 4 e_0,
\qquad
[h_1, e_1] =  2e_1,
\\
{}[h_0, f_0] = -2f_0,
\qquad
[h_0, f_1] = f_1,
&\qquad&
[h_1, f_0] = 4 f_0,
\qquad
[h_1, f_1] = - 2f_1,
\\
(\on{ad} e_0)^{2} e_1=0,
\qquad
(\on{ad} e_1)^{5} e_0=0,
&\qquad&
(\on{ad} f_0)^{2} f_1=0,
\qquad
(\on{ad} f_1)^{5} f_0=0 .
\eea

The Lie algebra $\g$ is graded with respect to the {\it standard grading}, $\deg e_i=1, \deg f_i=-1$,\ $ i=0,1$.
Let $\g^j=\{x\in\g\ |\ \deg x=j\}$, then $\g = \oplus_{j\in\Z}\,\g^j$.

Notice that $\g^0$ is the one-dimensional space generated by $h_0,h_1$.
Denote $\h=\g^0$. Introduce elements $\al_0,\al_1$ of the dual space $\h^*$ by the conditions
$\langle \al_j, h_i\rangle =a_{i,j}$ for $i,j=0,1$.
More precisely,
\bea
\langle \al_0, h_0 \rangle =2,
\qquad
 \langle \al_1, h_1 \rangle = 2,
\qquad
\langle \al_0, h_1 \rangle =-4,
\qquad
\langle \al_1, h_0 \rangle =-1
\eea

\subsubsection{Realizations of $\g$}
\label{Realizations of g}

Consider the complex Lie algebra $\frak{gl}_3$ with standard basis $e_{i,j}$,\ $i,j=1,2,3$.

Let $w=e^{\pi i/3}$.
Define the {\it Coxeter automorphism} $C : \frak{sl}_3\to \frak{sl}_3 $  of order 6 by the formula
\bea
e_{1,2} \mapsto w^{-1} e_{2,3}, \qquad e_{2,3} \mapsto w^{-1} e_{1,2},
\qquad
e_{2,1} \mapsto w e_{3,2},
\qquad
e_{3,2} \mapsto w e_{2,1}.
\eea
Denote $(\frak{sl}_3)_j=\{ x \in \frak{sl}_3\ | \ Cx=w^jx\}$.
Then
\bea
&
(\frak{sl}_3)_0 = \langle e_{1,1}-e_{3,3}\rangle,
\quad
(\frak{sl}_3)_1 = \langle e_{2,1} + e_{3,2}, e_{1,3} \rangle, \quad
(\frak{sl}_3)_2 = \langle e_{1,2}-e_{2,3}\rangle,
&
\\
&
(\frak{sl}_3)_3 = \langle e_{1,1}-2e_{2,2}+e_{3,3}\rangle,
\quad
(\frak{sl}_3)_4 = \langle e_{2,1}-e_{3,2}\rangle,
\quad
(\frak{sl}_3)_5 = \langle e_{1,2}+e_{2,3}, e_{3,1} \rangle.
&
\eea
The twisted Lie subalgebra $L(\frak{sl}_3, C)\subset \slt[\xi,\xi^{-1}]$ is the subalgebra
\bea
L(\frak{sl}_3, C) = \oplus_{j \in \mathbb{Z}}\, \xi^{j} \otimes (\frak{sl}_3)_{j\, \on{mod}\, 6} .
\eea
The isomorphism $\tau_C: \g \to L(\frak{sl}_3, C)$ is defined by the formula
\bea
&&
e_0\mapsto \xi\otimes e_{1,3},
\qquad
e_1\mapsto \xi\otimes (e_{2,1}+e_{3,2}),
\\
&&
f_0\mapsto \xi^{-1}\otimes e_{3,1},
\qquad
f_1\mapsto \xi^{-1}\otimes (2e_{1,2}+2e_{2,3}),
\\
&&
h_0\mapsto 1\otimes (e_{1,1}-e_{3,3}),
\qquad
h_1\mapsto 1\otimes (-2e_{1,1}+2e_{3,3}).
\eea
Under this isomorphism we have $\g^j=\xi^j\otimes (\slt)_j$.

Define the {\it standard automorphism} $\sigma_0 :\frak{sl}_3\to \frak{sl}_3 $  of order 2 by the formula
\bea
e_{1,2} \mapsto e_{2,3},
\qquad
e_{2,3} \mapsto e_{1,2},
\qquad
e_{2,1} \mapsto  e_{3,2},
\qquad
e_{3,2} \mapsto  e_{2,1}.
\eea
Let $(\slt)_{0,j}=\{ x\in \slt\ | \ \si_0x=(-1)^jx\}$. Then
\bea
(\slt)_{0,0}=\langle e_{1,1}-e_{3,3}, e_{1,2} + e_{2,3}, e_{2,1}+e_{3,2}\rangle,
\qquad
(\slt)_{0,1}=\langle e_{1,2}- e_{2,3}, e_{2,1} - e_{3,2},  e_{1,3},  e_{3,1}\rangle.
\eea
The twisted Lie subalgebra $L(\frak{sl}_3, \si_0)\subset \slt[\la,\la^{-1}]$ is the subalgebra
\bea
L(\frak{sl}_3, \si_0) = \oplus_{j \in \mathbb{Z}}\, \la^{j} \otimes (\frak{sl}_3)_{0,\,j\, \on{mod}\, 2} .
\eea
The isomorphism $\tau_0: \g \to L(\frak{sl}_3, \si_0)$ is defined by the formula
\bea
&&
e_0\mapsto \la \otimes e_{1,3},
\qquad
e_1\mapsto 1 \otimes (e_{2,1}+e_{3,2}),
\\
&&
f_0\mapsto \la^{-1}\otimes e_{3,1},
\qquad
f_1\mapsto 1 \otimes (2e_{1,2}+2e_{2,3}),
\\
&&
h_0\mapsto 1\otimes (e_{1,1}-e_{3,3}),
\qquad
h_1\mapsto 1\otimes (-2e_{1,1}+2e_{3,3}).
\eea

Define the {\it standard automorphism} $\sigma_1 :\frak{sl}_3\to \frak{sl}_3 $  of order 4,
\bea
e_{1,2} \mapsto i e_{2,3}, \qquad
e_{2,3} \mapsto ie_{1,2},
\qquad
e_{2,1} \mapsto i^{-1} e_{3,2},
\qquad
e_{3,2} \mapsto i^{-1} e_{2,1},
\eea
where $i=\sqrt{-1}$.
Let $(\slt)_{1,j}=\{ x\in \slt\ | \ \si_0x=i^jx\}$. Then
\bea
&&
(\slt)_{1,0}=\langle e_{1,3}, e_{3,1}, e_{1,1}-e_{3,3}\rangle,
\qquad
(\slt)_{1,1}=\langle e_{1,2}- e_{2,3}, e_{2,1} + e_{3,2}\rangle,
\\
&&
(\slt)_{1,2}=\langle e_{1,1}-2e_{2,2}+e_{3,3}\rangle,
\qquad
(\slt)_{1,3}=\langle e_{2,1}- e_{3,2}, e_{1,2} + e_{2,3}\rangle.
\eea
The twisted Lie subalgebra $L(\frak{sl}_3, \si_1)\subset \slt[\mu,\mu^{-1}]$ is the subalgebra
\bea
L(\frak{sl}_3, \si_1) = \oplus_{j \in \mathbb{Z}} \,\mu^{j} \otimes (\frak{sl}_3)_{1,\,j\, \on{mod}\, 4} .
\eea
The isomorphism $\tau_1: \g \to L(\frak{sl}_3, \si_1)$ is defined by the formula
\bea
&&
e_0\mapsto 1 \otimes e_{1,3},
\qquad
e_1\mapsto \mu \otimes (e_{2,1}+e_{3,2}),
\\
&&
f_0\mapsto 1\otimes e_{3,1},
\qquad
f_1\mapsto \mu^{-1} \otimes (2e_{1,2}+2e_{2,3}),
\\
&&
h_0\mapsto 1\otimes (e_{1,1}-e_{3,3}),
\qquad
h_1\mapsto 1\otimes (-2e_{1,1}+2e_{3,3}).
\eea

The composition isomorphism $ L(\frak{sl}_3, \si_0)\to  L(\frak{sl}_3, C)$ is given by the formula
$\la^m\otimes e_{k,l} \mapsto \xi^{3m+k-l}\otimes e_{k,l}$.
The composition isomorphism $ L(\frak{sl}_3, \si_0)\to  L(\frak{sl}_3, \si_1)$ is given by the formula
$\la^m\otimes e_{k,l} \mapsto \mu^{2m+l-k}\otimes e_{k,l}$.

\begin{rem}
The standard automorphisms $\si_0,\si_1$ correspond to the two vertices of the Dynkin diagram of type $A^{(2)}_2$,
see \cite[Section 5]{DS}.
\end{rem}

\subsubsection{ Element $\La$}

Denote by $\La$ the element $e_0+e_1\in \g$. Then $\z=\{x\in\g\ | \ [\La,x]=0\}$ is an Abelian Lie subalgebra of $\g$. Denote $\z^j = \z\cap \g^j$, then
$\z =\oplus_{j\in\Z}\,\z^j$. We have $\dim \z^j=1$ if $j=1,5$ mod $6$ and $\dim \z^j=0$
otherwise.

If $\g$ is realized as $L(\slt,C)$, then the
 element $\xi^{6m+1}\otimes (e_{2,1}+e_{3,2}+ e_{1,3})$
generates $\z^{6m+1}$ and the element
 $\xi^{6m-1}\otimes (e_{1,2}+e_{2,3}+ e_{3,1})$
generates $\z^{6m-1}$.

If $\g$ is realized as $L(\slt,\si_0)$, then the
 element $\la^{2m}\otimes (e_{2,1}+e_{3,2}) + \xi^{2m+1}\otimes e_{1,3}$
generates $\z^{6m+1}$ and the element
 $\la^{2m}\otimes (e_{2,1}+e_{3,2})+ \la^{2m-1}\otimes e_{3,1})$
generates $\z^{6m-1}$.

\begin{lem}
For any $m\in \Z$, the elements
\bea
(\tau_C)^{-1}(\xi^{6m+1}\otimes (e_{2,1}+e_{3,2}+ e_{1,3})),
\qquad
(\tau_0)^{-1}(\la^{2m}\otimes (e_{2,1}+e_{3,2}) + \xi^{2m+1}\otimes e_{1,3})
\eea
 of $\z^{6m+1}$
are equal. Similarly,  the elements
\bea
(\tau_C)^{-1}(\xi^{6m-1}\otimes (e_{1,2}+e_{2,3}+ e_{3,1})),
\qquad
(\tau_0)^{-1}(\la^{2m}\otimes (e_{2,1}+e_{3,2})+ \la^{2m-1}\otimes e_{3,1})
\eea
 of $\z^{6m-1}$ are equal.
\qed
\end{lem}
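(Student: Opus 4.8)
The plan is to reduce the statement to a one-line computation inside $L(\slt,C)$. Both $\tau_C$ and $\tau_0$ are isomorphisms out of the abstract Lie algebra $\g$, and the subspaces $\z^{6m\pm1}\subset\g$ are intrinsic to $\g$; hence, in each of the two assertions, the two displayed elements of $\g$ are equal if and only if the composition isomorphism $\varphi:=\tau_C\circ\tau_0^{-1}:L(\slt,\si_0)\to L(\slt,C)$ carries the $\tau_0$-side representative to the $\tau_C$-side representative. Concretely, $(\tau_C)^{-1}(A)=(\tau_0)^{-1}(B)$ is equivalent to $A=\varphi(B)$. But $\varphi$ is exactly the map recorded just above the lemma, namely $\la^m\otimes e_{k,l}\mapsto\xi^{3m+k-l}\otimes e_{k,l}$ (that formula is precisely how $\tau_C\circ\tau_0^{-1}$ acts, as one sees by tracking it on the canonical generators $e_i,f_i,h_i$). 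So the lemma becomes a termwise application of this monomial rule, and I would organize the proof as: (1) recall $\varphi$ and the reduction $A=\varphi(B)$; (2) evaluate $\varphi$ on the $\z^{6m+1}$ representative; (3) evaluate $\varphi$ on the $\z^{6m-1}$ representative.

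For $\z^{6m+1}$, apply $\varphi$ to $\la^{2m}\otimes(e_{2,1}+e_{3,2})+\la^{2m+1}\otimes e_{1,3}$: the three exponents produced are $3(2m)+2-1=6m+1$, $3(2m)+3-2=6m+1$, and $3(2m+1)+1-3=6m+1$, so the image is $\xi^{6m+1}\otimes(e_{2,1}+e_{3,2}+e_{1,3})$, which is the stated generator of $\z^{6m+1}$ on the $L(\slt,C)$ side. For $\z^{6m-1}$, apply $\varphi$ to $\la^{2m}\otimes(e_{1,2}+e_{2,3})+\la^{2m-1}\otimes e_{3,1}$: the exponents are $3(2m)+1-2=6m-1$, $3(2m)+2-3=6m-1$, and $3(2m-1)+3-1=6m-1$, giving $\xi^{6m-1}\otimes(e_{1,2}+e_{2,3}+e_{3,1})$, the stated generator of $\z^{6m-1}$. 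This settles both equalities.

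There is essentially no serious obstacle here; the only care needed is the arithmetic of the exponent $3m+k-l$. I would, however, first correct two apparent misprints in the statement before computing: the second summand of the $\z^{6m+1}$ representative should read $\la^{2m+1}\otimes e_{1,3}$ (written in the variable $\la$ of $\slt[\la,\la^{-1}]$, since $L(\slt,\si_0)$ lives there), and the $\z^{6m-1}$ representative should involve $e_{1,2}+e_{2,3}$ rather than $e_{2,1}+e_{3,2}$. Both corrections are forced: by the grading (a monomial $c\,e_{k,l}$ can lie in $\z^{6m\pm1}$ only for the one power of $\la$ with $3m'+k-l=6m\pm1$), and by the requirement that $\varphi$ land on the already-identified generators $\xi^{6m+1}\otimes(e_{2,1}+e_{3,2}+e_{1,3})$ and $\xi^{6m-1}\otimes(e_{1,2}+e_{2,3}+e_{3,1})$ of $\z^{6m\pm1}$ in $L(\slt,C)$. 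Once the representatives are written correctly, the verification is the short calculation above.
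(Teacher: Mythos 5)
Your proof is correct and is exactly the direct verification the paper leaves to the reader (the lemma is stated with no proof), namely a termwise application of the recorded composition isomorphism $\la^m\otimes e_{k,l}\mapsto\xi^{3m+k-l}\otimes e_{k,l}$. Your two corrections to the statement ($\la^{2m+1}\otimes e_{1,3}$ in place of $\xi^{2m+1}\otimes e_{1,3}$, and $e_{1,2}+e_{2,3}$ in place of $e_{2,1}+e_{3,2}$ for the $\z^{6m-1}$ representative) are indeed forced, and are confirmed by the matrices $B_{6m\pm1}$ displayed in the lemma that follows.
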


Denote the elements $(\tau_C)^{-1}(\xi^{6m+1}\otimes (e_{2,1}+e_{3,2}+ e_{1,3}))$,
$(\tau_C)^{-1}(\xi^{6m-1}\otimes (e_{1,2}+e_{2,3}+ e_{3,1}))$ of $\g$ by $\La_{6m+1}$ and
$\La_{6m-1}$, respectively.  Notice that $\La_1=e_0+e_1=\La$.

We set $\La_j=0$ if $j\neq 1,5$ mod $6$.

\begin{lem}
Let us consider the elements $\xi^{6m+1}\otimes (e_{2,1}+e_{3,2}+ e_{1,3})$,
$\xi^{6m-1}\otimes (e_{1,2}+e_{2,3}+ e_{3,1})$  as  $3\times 3$ matrices,
\bea
A_{6m+1}=\left(
\begin{matrix}
0 & 0 & \xi^{6m+1} \\
\xi^{6m+1} & 0 & 0 \\
0 & \xi^{6m+1} & 0
\end{matrix}
\right),
\qquad
A_{6m-1}=\left(
\begin{matrix}
0 & \xi^{6m-1} & 0 \\
0 & 0 & \xi^{6m-1} \\
\xi^{6m-1} & 0 & 0
\end{matrix}
\right),
\eea
respectively. Then  $A_{6m+1} = (A_1)^{6m+1}$ and $A_{6m-1} = (A_1)^{6m-1}$.

Similarly, let us consider the elements $\la^{2m}\otimes (e_{2,1}+e_{3,2}) + \xi^{2m+1}\otimes e_{1,3},
\la^{2m}\otimes (e_{2,1}+e_{3,2})+ \la^{2m-1}\otimes e_{3,1}$   as  $3\times 3$ matrices,
\bea
B_{6m+1}=\left(
\begin{matrix}
0 & 0 & \la^{2m+1} \\
\la^{2m} & 0 & 0 \\
0 & \la^{2m} & 0
\end{matrix}
\right),
\qquad
B_{6m-1}=\left(
\begin{matrix}
0 & \la^{2m} & 0 \\
0 & 0 & \la^{2m} \\
\la^{2m-1} & 0 & 0
\end{matrix}
\right),
\eea
respectively. Then  $B_{6m+1} = (B_1)^{6m+1}$ and $B_{6m-1} = (B_1)^{6m-1}$.
\qed
\end{lem}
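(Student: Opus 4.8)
The plan is to reduce both assertions to the single fact that the cube of $A_1$ (respectively of $B_1$) is a scalar matrix, after which everything is routine bookkeeping of exponents. First I would write $A_1=\xi P$, where
\[
P=\begin{pmatrix}0&0&1\\ 1&0&0\\ 0&1&0\end{pmatrix}
\]
is the matrix of the $3$-cycle $(1\,2\,3)$, so that $P^3=I$ and hence $(A_1)^3=\xi^3 I$. Since $\xi^3 I$ is central, for every $m\in\Z$ one has $(A_1)^{6m+1}=\big((A_1)^3\big)^{2m}A_1=\xi^{6m}A_1=\xi^{6m+1}P$, whose entries are exactly those of $A_{6m+1}$; likewise $(A_1)^{6m-1}=\big((A_1)^3\big)^{2m-1}(A_1)^2=\xi^{6m-3}\,\xi^{2}\,P^{2}=\xi^{6m-1}P^{2}$, and $P^{2}$ is the matrix of $(1\,3\,2)$, so its entries reproduce $A_{6m-1}$.

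For the matrices $B_{6m\pm1}$ I would run the same argument after observing that $B_1=\on{diag}(\la,1,1)\,P$. A direct multiplication---or the conjugation identity $P\,\on{diag}(\la,1,1)=\on{diag}(1,\la,1)\,P$, which just cycles the position of the entry $\la$---gives
\[
(B_1)^2=\begin{pmatrix}0&\la&0\\ 0&0&\la\\ 1&0&0\end{pmatrix},\qquad (B_1)^3=\la I .
\]
Then $(B_1)^{6m+1}=\big((B_1)^3\big)^{2m}B_1=\la^{2m}B_1$ and $(B_1)^{6m-1}=\big((B_1)^3\big)^{2m-1}(B_1)^2=\la^{2m-1}(B_1)^2$, and comparing entries with the displayed forms of $B_{6m+1}$ and $B_{6m-1}$ completes the proof.

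There is no real obstacle here: the lemma is a short computation. The one point worth a word of care is the exponent bookkeeping---in the $C$-realization the power of $\xi$ goes up by $1$ at each matrix multiplication, whereas in the $\si_0$-realization the power of $\la$ goes up by $1$ only every third multiplication (since $(B_1)^3=\la I$, not $\la^3 I$), which is exactly why $\xi^{6m\pm1}$ in $A_{6m\pm1}$ matches $\la^{2m}$ and $\la^{2m\pm1}$ in $B_{6m\pm1}$. Alternatively, one can skip the second computation altogether: the composition isomorphism $L(\slt,\si_0)\to L(\slt,C)$, $\la^{m}\otimes e_{k,l}\mapsto\xi^{3m+k-l}\otimes e_{k,l}$, of \S\ref{Realizations of g} is multiplicative on $3\times3$ matrices and carries $B_1$ to $A_1$ and $B_{6m\pm1}$ to $A_{6m\pm1}$, so $(B_1)^{6m\pm1}=B_{6m\pm1}$ follows immediately from $(A_1)^{6m\pm1}=A_{6m\pm1}$.
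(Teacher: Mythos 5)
Your proposal is correct, and it supplies exactly the routine computation the paper leaves to the reader (the lemma is stated with no written proof): writing $A_1=\xi P$ with $P^3=I$ and $(B_1)^3=\la I$ and reducing exponents mod $3$ is the intended verification, and your exponent bookkeeping checks out in both realizations. The closing remark that the composition isomorphism $\la^{m}\otimes e_{k,l}\mapsto\xi^{3m+k-l}\otimes e_{k,l}$ is multiplicative and carries $B_{6m\pm1}$ to $A_{6m\pm1}$ is a nice economy, consistent with the Lemma preceding this one in the paper.
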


\subsection{Kac-Moody algebra of type $A_2^{(1)}$}

\subsubsection{Definition}

 Consider the Cartan matrix of type $A_2^{(1)}$,
\bea
\At= \left(
\begin{matrix}
a_{0,0} & a_{0,1} & a_{0,2}\\
a_{1,0} & a_{1,1} & a_{1,2}\\
a_{2,0} & a_{2,1} & a_{2,2}\\
\end{matrix}
\right)=
\left(
\begin{matrix}
2 & -1 & -1\\
-1 & 2 & -1\\
-1 & -1 & 2\\
\end{matrix}
\right) .
\eea
The Kac-Moody algebra $\gA$ {\it of type} $A_2^{(1)}$ is the Lie algebra with {\it canonical generators} $E_i,H_i,F_i\in\g,$\ $ i=0,1,2$,
subject to the relations
\bea
[E_i,F_j]=\delta_{i,j}H_i,
\eea
\bea
[H_i, E_j] = a_{i,j} E_j , \qquad
[H_i,  F_j] = - a_{i,j} F_j\,,
\eea
\bea
(\on{ad} E_i)^{1-a_{i,j}} E_j=0,
\qquad
(\on{ad}  F_i)^{1-a_{i,j}} F_j=0,
\eea
\bea
H_0 + H_1 + H_2 = 0,
\eea
see this definition in \cite[Section 5]{DS}.
The Lie algebra $\gA$ is graded with respect to the {\it standard grading}, $\deg E_i=1, \deg F_i=-1,$\ $ i=0,1,2$.
Let $\gA^j=\{x\in\gA\ |\ \deg x=j\}$, then $\gA = \oplus_{j\in\Z}\,\gA^j$.

For $j=0,1,2$, we denote by $\n_j^-\subset \gA$ the Lie subalgebra generated by $F_i$, $i\in\{0,1,2,\},\, i\ne j$.
For example, $\n^-_0$ is generated by $F_1,F_2$.

\subsubsection{Realizations of $\gA$}
Consider the Lie algebra $\slt[\la,\la^{-1}]$. The isomorphism $\tau_0^{(0)} :\gA\to \slt[\la,\la^{-1}]$ is defined by the formula
\bea
&&
E_0\mapsto \la\otimes e_{1,3},
\qquad
E_1\mapsto 1\otimes e_{2,1},
\qquad
E_2\mapsto 1\otimes e_{3,2},
\\
&&
F_0\mapsto \la^{-1}\otimes e_{3,1},
\qquad
F_1\mapsto 1\otimes e_{1,2},
\qquad
F_2\mapsto 1\otimes e_{2,3},
\\
&&
H_0\mapsto 1\otimes (e_{1,1}-e_{3,3}),
\qquad
H_1\mapsto 1\otimes (e_{2,2}-e_{1,1}),
\qquad
H_2\mapsto 1\otimes (e_{3,3}-e_{2,2}).
\eea

Let $\ep=e^{2\pi i/3}$.
Define the {\it Coxeter automorphism of type $\At$}, \  $C^{(1)} : \frak{sl}_3\to \frak{sl}_3 $  of order 3 by the formula
\bea
e_{1,2} \mapsto \ep^{-1} e_{1,2}, \qquad e_{2,3} \mapsto \ep^{-1} e_{2,3},
\qquad
e_{2,1} \mapsto \ep e_{2,1},
\qquad
e_{3,2} \mapsto \ep e_{3,2}.
\eea
Denote $(\frak{sl}_3)^{(1)}_j=\{ x \in \frak{sl}_3\ | \ C^{(1)}x=\ep^j x\}$.
Then
\bea
(\frak{sl}_3)^{(1)}_0 = \langle e_{1,1}-e_{2,2}, e_{2,2}-e_{3,3}\rangle,
\quad
(\frak{sl}_3)^{(1)}_1 = \langle e_{2,1}, e_{3,2}, e_{1,3} \rangle,
\quad
(\frak{sl}_3)^{(1)}_2 = \langle e_{1,2}, e_{2,3}, e_{3,1} \rangle.
\eea
The twisted Lie subalgebra $L(\frak{sl}_3, C^{(1)})\subset \slt[\xi,\xi^{-1}]$ is the subalgebra
\bea
L(\frak{sl}_3, C^{(1)}) = \oplus_{j \in \mathbb{Z}}\, \xi^{j} \otimes (\frak{sl}_3)^{(1)}_{j\, \on{mod}\, 3} .
\eea
The isomorphism $\tau_{C^{(1)}}: \gA \to L(\frak{sl}_3, C^{(1)})$ is defined by the formula
\bea
&&
E_0\mapsto \xi\otimes e_{1,3},
\qquad
E_1\mapsto \xi\otimes e_{2,1},
\qquad
E_2\mapsto \xi\otimes e_{3,2},
\\
&&
F_0\mapsto \xi^{-1}\otimes e_{3,1},
\qquad
F_1\mapsto \xi^{-1}\otimes e_{1,2},
\qquad
F_2\mapsto \xi^{-1}\otimes e_{2,3},
\\
&&
H_0\mapsto 1\otimes (e_{1,1}-e_{3,3}),
\qquad
H_1\mapsto 1\otimes (e_{2,2}-e_{1,1}),
\qquad
H_2\mapsto 1\otimes (e_{3,3}-e_{2,2}).
\eea
Under this isomorphism we have $\gA^j=\xi^j\otimes (\slt)^{(1)}_j$.

The composition isomorphism $ \slt[\la,\la^{-1}] \to  L(\frak{sl}_3, C^{(1)})$ is given by the formula
$\la^m\otimes e_{k,l} \mapsto \xi^{3m+k-l}\otimes e_{k,l}$.

\subsubsection{ Element $\La^{(1)}$}

Denote by $\La^{(1)}$ the element $E_0+E_1+E_2\in \gA$. Then $\z(\At)=\{x\in\gA\ | \ [\La^{(1)},x]=0\}$
 is an Abelian Lie subalgebra of $\g(\At)$. Denote $\z(\At)^j = \z(\At)\cap \gA^j$, then
$\z(\At) =\oplus_{j\in\Z}\,\z(\At)^j$. We have $\dim \z(\At)^j=1$ if $j\neq 0$ mod $3$ and $\dim \z(\At)^j=0$
othewise. If $\gA$ is realized as $ \slt[\la,\la^{-1}]$ or $ L(\frak{sl}_3, C^{(1)})$, then a
 basis of $\z(\At)$ is formed by the matrices $(\La^{(1)})^j$ with $j\neq 0$ mod $3$.

Let $\gA$ be realized as $ \slt[\la,\la^{-1}]$. Consider $\La^{(1)}= e_{2,1}+e_{3,2}+\la\otimes e_{1,3}$ as a
$3\times 3$ matrix depending on the parameter $\la$.
Let $T = \sum_{j=-\infty}^n T_j$ be a formal series with $T_j \in  \gA^j$.
Denote $T^+ = \sum_{j=0}^n T_j,$\ {}\  $ T^- = \sum_{j<0} T_j$.

By \cite[Lemma 3.4]{DS}, we may represent $T$ uniquely in the form $T = \sum_{j=-\infty}^m b_j\,(\La^{(1)})^j$,\  $b_j\in \Dia$, where
$\Dia\subset\frak{gl}_3$ is the space of diagonal $3\times 3$ matrices.
Denote $(T)_\La^+ = \sum_{j=0}^n b_j\,(\La^{(1)})^j,$\ {}\  $ (T)_\La^- = \sum_{j<0} b_j\,(\La^{(1)})^j$.

\begin{lem}
We have $(T)_\La^+= T^+$, $(T)_\La^-=T^-$, $b_0=T^0$.

\end{lem}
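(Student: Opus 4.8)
\medskip
\noindent\textbf{Proof proposal.}\enspace
The plan is to show that the Drinfeld--Sokolov decomposition $T=\sum_j b_j(\La^{(1)})^j$ coincides, term by term, with the grading decomposition $T=\sum_j T_j$; that is, $b_j(\La^{(1)})^j=T_j$ for every $j$. Granting this, the three assertions are immediate: summing over $j\ge 0$ gives $(T)_\La^+=T^+$, summing over $j<0$ gives $(T)_\La^-=T^-$, and the $j=0$ term gives $b_0=b_0(\La^{(1)})^0=T_0=T^0$.

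First I would record, by a direct $3\times 3$ matrix computation, that $(\La^{(1)})^3=\la\cdot\mathrm{id}$, where $\La^{(1)}=e_{2,1}+e_{3,2}+\la\, e_{1,3}$ and $\mathrm{id}$ is the identity matrix. Hence $\La^{(1)}$ is invertible over $\C[\la,\la^{-1}]$, the powers $(\La^{(1)})^j$ are defined for all $j\in\Z$, and $(\La^{(1)})^{j+3}=\la\,(\La^{(1)})^j$. Next I extend the standard grading of $\gA=\slt[\la,\la^{-1}]$ to all of $\frak{gl}_3[\la,\la^{-1}]$ by setting $\deg(\la^m e_{k,l})=3m+k-l$; this is consistent with $\gA^j=\xi^j\otimes(\slt)^{(1)}_j$ under the composition isomorphism $\la^m\otimes e_{k,l}\mapsto\xi^{3m+k-l}\otimes e_{k,l}$. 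Each of the three summands of $\La^{(1)}$ has degree $1$, so $(\La^{(1)})^j$ is homogeneous of degree $j$; and left multiplication by a diagonal matrix $b\in\Dia$ preserves degree, since diagonal matrices are homogeneous of degree $0$. Consequently every term $b_j(\La^{(1)})^j$ is homogeneous of degree $j$.

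The key step is then a degree count showing that $\Dia\cdot(\La^{(1)})^j$ is exactly the degree-$j$ subspace of $\frak{gl}_3[\la,\la^{-1}]$. Indeed this subspace is three-dimensional: for $j$ divisible by $3$ it equals $\la^{j/3}\Dia$, and otherwise it is spanned by the monomials $\la^m e_{k,l}$ supported on the three nonzero-entry positions of $(\La^{(1)})^j$ --- and those are precisely the positions occupied by $b\,(\La^{(1)})^j$ as $b$ runs over $\Dia$. Hence $\frak{gl}_3[\la,\la^{-1}]=\bigoplus_j\Dia\cdot(\La^{(1)})^j$ is the grading decomposition; in particular this reproves the uniqueness in \cite[Lemma 3.4]{DS} for the present realization, and it forces $b_j(\La^{(1)})^j$ to be the degree-$j$ homogeneous component of $T$, which is $T_j$. (Incidentally $b_j$ is then traceless whenever $3\mid j$, since $T_j\in\gA^j\subset\slt[\la,\la^{-1}]$.)

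With the term-by-term identity $b_j(\La^{(1)})^j=T_j$ in hand, the rest is bookkeeping: the top index $m$ may be taken equal to $n$ because $b_j(\La^{(1)})^j$ vanishes exactly when $T_j$ does, and then the stated formulas for $(T)_\La^+$, $(T)_\La^-$, and $b_0$ follow as in the first paragraph. The most substantive step is the degree count identifying $\Dia\cdot(\La^{(1)})^j$ with the $j$-th graded subspace, and even that is elementary, the only computations involved being $(\La^{(1)})^3=\la\cdot\mathrm{id}$ and the degree formula $\deg(\la^m e_{k,l})=3m+k-l$; so I do not anticipate a genuine obstacle.
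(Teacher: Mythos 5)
Your proposal is correct and follows essentially the same route as the paper: both arguments reduce the lemma to the observation that each term $b_j(\La^{(1)})^j$ is homogeneous of degree $j$ for the standard grading, so the Drinfeld--Sokolov decomposition refines the grading decomposition term by term. The paper verifies this by explicit computation for $j=0,\pm 1$ and an appeal to ``similarly,'' whereas you derive it more systematically from the homogeneity of $\La^{(1)}$ (using the correct degree formula $3m+k-l$, which the paper's proof misstates as $3m+l-k$) and add an optional dimension count; these are cosmetic improvements, not a different proof.
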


\begin{proof}
The isomorphism $ \iota : \slt[\la,\la^{-1}] \to L(\frak{sl}_3, C^{(1)})$ is given by the formula
$\la^m\otimes e_{k,l}\mapsto \xi^{3m+l-k}$.
We have $\iota(b_0) = \iota(b_0^1 e_{1,1}+ b_0^2 e_{2,2}+b_0^3 e_{3,3})=1\otimes (b_0^1 e_{1,1}+ b_0^2 e_{2,2}+b_0^3 e_{3,3})\in \gA^0$,
$\iota(b_1\La^{(1)}) = \iota((b_1^1 e_{1,1}+ b_1^2 e_{2,2}+b_1^3 e_{3,3})(e_{2,1}+e_{3,2}+\la e_{1,3}))
=\iota(b_1^1 \la e_{1,3}+ b_1^2 e_{2,1}+b_1^3 e_{3,2})=\xi\otimes (b_1^1 e_{1,3}+ b_1^2 e_{2,1}+b_1^3 e_{3,2})\in\gA^1$,
$\iota(b_{-1}(\La^{(1)})^{-1}) = \iota((b_{-1}^1 e_{1,1}+ b_{-1}^2 e_{2,2}+b_{-1}^3 e_{3,3})(e_{1,2}+e_{2,3}+\la^{-1} e_{3,1}))
=\iota(b_{-1}^1 e_{1,2}+ b_{-1}^2 e_{2,3}+b_{-1}^3 \la^{-1}e_{3,1})=\xi\otimes (b_{-1}^1 e_{1,2}+ b_{-1}^2 e_{2,3}+b_{-1}^3 e_{3,1})\in\gA^{-1}$.
Similarly one checks that $\iota(b_j\,(\La^{(1)})^j)\in\gA^j$ for any $j$.
\end{proof}

Let us consider the elements $F_0, 2F_1+2F_2$ as the  $3\times 3$ matrices
 $\la^{-1}e_{3,1}, 2e_{1,2}+2e_{2,3}$, respectively.

\begin{lem}
\label{lem exp}
 Let $g \in \C$. Then
 \bean
 \label{formula exp}
 &&
 \\
 \notag
 &&
 e^{gF_0} = 1 + ge_{3,3}(\La^{(1)})^{-1},
 \qquad
 e^{g(2F_1+2F_2)} = 1 + 2g(e_{1,1}+e_{2,2})(\La^{(1)})^{-1} + 2g^2 e_{1,1} (\La^{(1)})^{-2}.
 \eean
 \qed
\end{lem}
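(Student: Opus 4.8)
The plan is to verify both identities of Lemma \ref{lem exp} by direct matrix computation, using the realization $\tau_0^{(0)}$ in which $F_0$ and $2F_1+2F_2$ act as explicit $3\times 3$ matrices with Laurent-polynomial entries in $\la$, and then rewrite the resulting exponentials in the canonical form $\sum_j b_j (\La^{(1)})^j$ guaranteed by Lemma \ref{lem exp}'s predecessor (the decomposition lemma after \cite[Lemma 3.4]{DS}). First I would compute powers of the matrix $F_0 = \la^{-1}e_{3,1}$: since $e_{3,1}^2 = 0$, one has $F_0^2 = 0$, so $e^{gF_0} = 1 + gF_0 = 1 + g\la^{-1}e_{3,1}$. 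It then remains to check that $\la^{-1}e_{3,1} = e_{3,3}(\La^{(1)})^{-1}$; this follows from the observation used in the proof of the previous lemma, namely $(\La^{(1)})^{-1} = e_{1,2}+e_{2,3}+\la^{-1}e_{3,1}$ (the inverse of $\La^{(1)}=e_{2,1}+e_{3,2}+\la e_{1,3}$, which one checks by multiplying out), whence $e_{3,3}(\La^{(1)})^{-1} = e_{3,3}(e_{1,2}+e_{2,3}+\la^{-1}e_{3,1}) = \la^{-1}e_{3,1}$ because $e_{3,3}e_{3,1}=e_{3,1}$ and $e_{3,3}e_{1,2}=e_{3,3}e_{2,3}=0$. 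That settles the first formula.

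For the second formula, I would set $X = 2F_1+2F_2 = 2e_{1,2}+2e_{2,3}$ and compute its powers. One finds $X^2 = (2e_{1,2}+2e_{2,3})^2 = 4e_{1,3}$ (using $e_{1,2}e_{2,3}=e_{1,3}$ and $e_{2,3}e_{1,2}=0$, etc.), and $X^3 = X\cdot X^2 = (2e_{1,2}+2e_{2,3})\cdot 4e_{1,3} = 0$ since $e_{1,2}e_{1,3}=e_{2,3}e_{1,3}=0$. Hence $e^{gX} = 1 + gX + \tfrac{g^2}{2}X^2 = 1 + 2g(e_{1,2}+e_{2,3}) + 2g^2 e_{1,3}$. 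The remaining task is to identify the right-hand side: one needs $2g(e_{1,2}+e_{2,3}) + 2g^2 e_{1,3} = 2g(e_{1,1}+e_{2,2})(\La^{(1)})^{-1} + 2g^2 e_{1,1}(\La^{(1)})^{-2}$. Using $(\La^{(1)})^{-1} = e_{1,2}+e_{2,3}+\la^{-1}e_{3,1}$ again, we get $(e_{1,1}+e_{2,2})(\La^{(1)})^{-1} = e_{1,1}e_{1,2} + e_{2,2}e_{2,3} = e_{1,2}+e_{2,3}$, handling the linear term. For the quadratic term I would compute $(\La^{(1)})^{-2} = (e_{1,2}+e_{2,3}+\la^{-1}e_{3,1})^2$; multiplying out gives $e_{1,3} + \la^{-1}e_{2,1} + \la^{-1}e_{3,2}$ (the cross terms $e_{1,2}e_{2,3}=e_{1,3}$, $e_{2,3}\cdot\la^{-1}e_{3,1}=\la^{-1}e_{2,1}$, $\la^{-1}e_{3,1}e_{1,2}=\la^{-1}e_{3,2}$, all other products vanishing), so $e_{1,1}(\La^{(1)})^{-2} = e_{1,1}e_{1,3} = e_{1,3}$, which matches. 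Both identities then follow.

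The only subtlety — and it is minor — is bookkeeping: one must be careful about the order of matrix multiplication when inserting the diagonal coefficients $b_j$ on the left of $(\La^{(1)})^j$, consistent with the convention fixed in the paragraph preceding the lemma, and one must double-check the formula $(\La^{(1)})^{-1} = e_{1,2}+e_{2,3}+\la^{-1}e_{3,1}$ against $\La^{(1)}=e_{2,1}+e_{3,2}+\la e_{1,3}$ by verifying $\La^{(1)}(\La^{(1)})^{-1}=1$ (indeed $e_{2,1}e_{1,2}=e_{2,2}$, $e_{3,2}e_{2,3}=e_{3,3}$, $\la e_{1,3}\cdot\la^{-1}e_{3,1}=e_{1,1}$, summing to the identity, and the remaining products vanish). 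I do not anticipate a genuine obstacle here; the content of the lemma is that the naive exponential series truncates (because the relevant nilpotent matrices have low order) and that the truncated answer happens to lie in the $(\La^{(1)})$-adic form with \emph{diagonal} coefficients, which is exactly what the two displayed equalities assert. The proof is therefore a short finite computation, and I would present it essentially as written above, perhaps consolidating the two cases by first recording the single identity $(\La^{(1)})^{-1} = e_{1,2}+e_{2,3}+\la^{-1}e_{3,1}$ and its square as a preliminary.
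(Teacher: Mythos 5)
Your computation is correct, and it is exactly the direct matrix verification the paper intends: the lemma is stated with the proof omitted (marked \qed), and the expected argument is precisely to exponentiate the nilpotent matrices $\la^{-1}e_{3,1}$ and $2e_{1,2}+2e_{2,3}$ and rewrite the result using $(\La^{(1)})^{-1}=e_{1,2}+e_{2,3}+\la^{-1}e_{3,1}$ from Lemma \ref{lem lambda}. All of your intermediate products ($F_0^2=0$, $X^2=4e_{1,3}$, $X^3=0$, and the identifications of $e_{3,3}(\La^{(1)})^{-1}$, $(e_{1,1}+e_{2,2})(\La^{(1)})^{-1}$, $e_{1,1}(\La^{(1)})^{-2}$) check out.
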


\begin{lem}
\label{lem lambda} We have
$(\La^{(1)})^{-1} = e_{1,2}+e_{2,3} + \la^{-1}e_{3,1}$, and
\bean
\label{formula La}
e_{i+1,i+1}\La^{(1)} = \La^{(1)} e_{i,i}, \qquad
e_{i,i}(\La^{(1)})^{-1} = (\La^{(1)})^{-1}e_{i+1,i+1}
\eean
for all $i$, where we set $e_{4,4}=e_{1,1}$.
 \qed
\end{lem}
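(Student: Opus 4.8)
The plan is to work entirely with the explicit $3\times 3$ matrix realization, writing $\La^{(1)}=e_{2,1}+e_{3,2}+\la e_{1,3}$, i.e. the matrix $\left(\begin{smallmatrix}0&0&\la\\ 1&0&0\\ 0&1&0\end{smallmatrix}\right)$, and verifying all three assertions by the elementary multiplication table of matrix units, $e_{k,k}e_{a,b}=\dl_{k,a}e_{a,b}$ and $e_{a,b}e_{k,k}=\dl_{k,b}e_{a,b}$. First I would establish the inverse formula: multiplying $(e_{1,2}+e_{2,3}+\la^{-1}e_{3,1})$ by $\La^{(1)}$ on the right, the only surviving products are $e_{1,2}e_{2,1}=e_{1,1}$, $e_{2,3}e_{3,2}=e_{2,2}$ and $(\la^{-1}e_{3,1})(\la e_{1,3})=e_{3,3}$, all other six products vanishing because the inner indices do not match; hence the claimed expression is a left inverse, and therefore (by finite dimensionality) the two-sided inverse, of $\La^{(1)}$.

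Next I would check the identities $e_{i+1,i+1}\La^{(1)}=\La^{(1)}e_{i,i}$ for $i=1,2,3$, with the convention $e_{4,4}=e_{1,1}$. For each $i$ exactly one summand of $\La^{(1)}$ survives on each side: left multiplication by $e_{i+1,i+1}$ keeps the summand whose first index equals $i+1$, right multiplication by $e_{i,i}$ keeps the summand whose second index equals $i$, and these are the same summand, namely $e_{2,1}$ for $i=1$, $e_{3,2}$ for $i=2$, and $\la e_{1,3}$ for $i=3$. This gives the three required identities at once.

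Finally, the relations $e_{i,i}(\La^{(1)})^{-1}=(\La^{(1)})^{-1}e_{i+1,i+1}$ follow formally from the previous ones: multiplying $e_{i+1,i+1}\La^{(1)}=\La^{(1)}e_{i,i}$ on the left and on the right by $(\La^{(1)})^{-1}$ yields $(\La^{(1)})^{-1}e_{i+1,i+1}=e_{i,i}(\La^{(1)})^{-1}$; alternatively one simply repeats the direct computation with the explicit form of $(\La^{(1)})^{-1}$ just obtained. There is no genuine obstacle here — the entire content is the multiplication table of the $e_{i,j}$ — and the only point requiring a little care is the cyclic index convention $e_{4,4}=e_{1,1}$, which records the fact that conjugation $x\mapsto \La^{(1)}x(\La^{(1)})^{-1}$ cyclically shifts the diagonal idempotents $e_{1,1}\mapsto e_{2,2}\mapsto e_{3,3}\mapsto e_{1,1}$.
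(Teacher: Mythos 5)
Your verification is correct and complete: the inverse formula, the three identities $e_{i+1,i+1}\La^{(1)}=\La^{(1)}e_{i,i}$, and their conjugated counterparts all follow exactly as you compute from the multiplication table of the matrix units. The paper states this lemma with the proof omitted as immediate, and your argument is precisely the routine check in the $3\times 3$ matrix realization that the authors had in mind.
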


\subsubsection{ Lie algebra $\g(A^{(2)}_2)$ as a subalgebra of $\gA$}
\label{as a sub}
The map $\varrho: \g(A^{(2)}_2) \to \gA$,
\bea
e_0 \mapsto E_0,
\qquad
e_1 \mapsto E_1+E_2,
\qquad
f_0 \mapsto F_0,
\qquad
f_1 \mapsto 2F_1+2F_2,
\eea
realizes the Lie algebra $\g(A^{(2)}_2)$ as a subalgebra of $\gA$. This embedding preserves the standard grading
and $\varrho(\La)=\La^{(1)}$.  We have $\varrho(\z(A^{(2)}_2))\subset\z(A^{(1)}_2)$.

If there is no confusion, we will consider $\g(\AT)$ as a subalgebra of $\g(\At)$.

\section{mKdV equations}
\label{sec MKDV}

In this section we follow \cite{DS}.

\subsection{mKdV equations of type $A^{(2)}_2$}

Denote by $\Bb$ the space of complex-valued functions of one variable $x$.  Given a finite dimensional vector space $W$, denote by $\Bb(W)$ the space of
$W$-valued functions of $x$. Denote by $\der$ the differential operator $\frac d{dx}$.

Consider the Lie algebra $\tilde \g$ of the formal differential operators of the form $c\der + \sum_{i=-\infty}^n p_i$, $c\in \C, p_i\in \B(\g^i)$.
Let $U=\sum_{i<0} U_i$, $U_i \in \B(\g^{i})$. If $\L\in\tilde\g$, define
\bea
e^{\ad U}(\L) = \L + [U,\L] + \frac 1{2!}[U,[U,\L]]+\dots \ .
\eea
The operator $e^{\ad U}(\L)$ belongs to $\tilde \g$. The map $e^{\ad U}$ is an automorphism of the Lie algebra $\tilde \g$.
The automorphisms of this type form a group.

If elements of $\g$ are realized as matrices depending on a parameter
as in Section \ref{Realizations of g}, then
$e^{\ad U}(\L) = e^U\L e^{-U}$.

A {\it Miura oper} of type $A^{(2)}_2$  is a differential operator of the form
\bean
\label{Miura la}
\L = \der + \Lambda + V
\eean
where $\Lambda = e_0+e_1\in\g$ and $V\in \B(\g^0)$.
Any Miura oper of type $A^{(2)}_2$ is an element of $\tilde \g$.
Denote by $\mc{M}(A^{(2)}_2)$ the space of all Miura opers of type $A^{(2)}_2$.

\begin{prop}
[{\cite[Proposition 6.2]{DS}}]
\label{Prop U}

For any Miura oper $\L$ of type $A^{(2)}_2$ there exists an element
 $U=\sum_{i<0} U_i$, $U_i \in \B(\g^{i})$, such that the operator $\L_0 = e^{\ad U}(\L)$ has the form
\bea
\L_0 = \der + \La + H,
\eea
where $H=\sum_{j<0} H_j, H_j\in \B(\z^{j})$. If $U,\tilde U$ are two such elements, then
$e^{\ad U}e^{-\ad \tilde U} = e^{\ad T}$, where $T = \sum_{j<0} T_j$, $T_j\in \z^j$.

\end{prop}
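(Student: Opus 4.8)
The plan is to construct the conjugating element $U$ degree by degree, then to argue uniqueness. Write $\L_0 = e^{\ad U}(\L) = \der + \La + V + [U,\der+\La+V] + \tfrac12[U,[U,\der+\La+V]]+\dots$, and observe that $[U_i,\der] = -U_i'$ contributes only to degrees $<0$, while $[U,\La]$ shifts degree up by $1$. Sorting $\L_0 = \der + \La + \sum_{j\le 0}(\L_0)_j$ by the standard grading, the degree-$0$ part is $V + [U_{-1},\La]$, the degree-$(-j)$ part for $j\ge 1$ is $[U_{-j-1},\La]$ plus terms involving $U_{-1},\dots,U_{-j}$, $V$, and their $x$-derivatives. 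Since $\La = \La_1$ generates $\z^1$, the adjoint action $\ad\La : \g^{i} \to \g^{i+1}$ has image complementing $\z^{i+1}$ and kernel exactly $\z^i$ (this is the standard fact about the principal/homogeneous Heisenberg for the element $\La$, reflecting $\dim\z^j = 1$ for $j\equiv\pm1\bmod 6$ and $0$ otherwise, together with $\g^i = \z^i \oplus [\g^{i-1},\La]$). Hence at each step I can solve: choose $U_{-j-1}\in\g^{-j}$ so that $[U_{-j-1},\La]$ cancels the non-$\z$ part of the accumulated degree-$(-j)$ term, leaving a residue $H_{-j}\in\B(\z^{-j})$. This recursively determines all $U_i$ (the choice of $U_{-j-1}$ is unique up to adding an element of $\ker\ad\La\cap\g^{-j} = \z^{-j}$) and produces $H = \sum_{j<0}H_j$ with $H_j\in\B(\z^j)$, establishing existence.

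For the decomposition $\g^i = \z^i \oplus \ad\La(\g^{i-1})$ that underlies the recursion, I would pass to the realization $\g \cong L(\slt,C)$ from Section~\ref{Realizations of g}, where $\La$ corresponds to $A_1 = \xi\otimes(e_{2,1}+e_{3,2}+e_{1,3})$, a regular semisimple matrix; then $\ad A_1$ on the loop algebra has kernel the span of the powers $A_1^j$ (which are exactly the generators of $\z^j$ by Lemma~1.2... i.e. the lemma giving $A_{6m\pm1} = A_1^{6m\pm1}$), and the image/kernel complementation in each graded piece is immediate from diagonalizing $\ad A_1$. This is the one genuinely structural input; the rest is bookkeeping with the grading.

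For uniqueness, suppose $e^{\ad U}(\L) = \der+\La+H$ and $e^{\ad\Tilde U}(\L) = \der+\La+\Tilde H$ both have the desired form. Set $e^{\ad U}e^{-\ad\Tilde U} = e^{\ad T}$ with $T = \sum_{j<0}T_j$, $T_j\in\B(\g^j)$ (such $T$ exists since the automorphisms $e^{\ad U}$ with $U$ of negative degree form a group, as stated in Section~\ref{sec MKDV}). Then $e^{\ad T}(\der+\La+\Tilde H) = \der+\La+H$. Expanding and comparing degrees: the degree-$(-j)$ component gives $[T_{-j-1},\La] + (\text{terms in }T_{-1},\dots,T_{-j},\Tilde H\text{ and derivatives}) = H_{-j}-\Tilde H_{-j}$, where the right side lies in $\B(\z^{-j})$. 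I claim inductively that all $T_{-j}$ lie in $\B(\z^{-j})$: granting $T_{-1},\dots,T_{-j}\in\B(\z)$, every bracket $[T_{-a},\Tilde H_{-b}]$ and $[T_{-a},[T_{-b},\cdot]]$ vanishes because $\z$ is abelian and $[\z,\z]=0$, and $[T_{-a},\La]=0$; so the degree-$(-j)$ relation collapses to $[T_{-j-1},\La] = H_{-j}-\Tilde H_{-j} \in \B(\z^{-j})$, and since $\ad\La(\g^{-j-1}) \cap \z^{-j}$ meets $\B(\z^{-j})$ only in $0$ we get both $[T_{-j-1},\La]=0$, i.e. $T_{-j-1}\in\B(\z^{-j-1})$, and $H_{-j}=\Tilde H_{-j}$. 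The base case $j=0$ is $[T_{-1},\La] = 0$ directly. This yields $T_j\in\z^j$ for all $j$ (in fact $x$-independent is not claimed; $T_j\in\B(\z^j)$, consistent with the statement), completing the proof.

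I expect the main obstacle to be purely organizational: carefully tracking how the $x$-derivatives $[U_i,\der] = -U_i'$ and the nonlinear terms $\tfrac1{k!}(\ad U)^k$ distribute across the grading so that at stage $j$ the only unknown entering linearly is $U_{-j-1}$ through $[U_{-j-1},\La]$ — i.e. verifying the recursion is genuinely triangular. The Lie-theoretic core ($\ker\ad\La = \z$ and the graded complement) is supplied by the realizations and lemmas already in the text.
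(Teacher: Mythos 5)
The paper gives no proof of this proposition --- it is imported verbatim from Drinfel$'$d--Sokolov \cite[Proposition 6.2]{DS} --- and your argument is precisely the standard graded-recursion proof from that source, resting on the decomposition $\g^i=\z^i\oplus[\La,\g^{i-1}]$ (valid because $\La$ is regular semisimple in the realization $L(\frak{sl}_3,C)$, so $\ad\La$ has $\ker=\z$ complementary to its image in each graded piece), so it is correct and matches the intended proof. One small caution: in the uniqueness step the term $[T,\der]=-T'$ contributes $-T_{-j}'$ in degree $-j$, so the relation there reads $[T_{-j-1},\La]=H_{-j}-\tilde H_{-j}+T_{-j}'\in\B(\z^{-j})$; the conclusion $T_{-j-1}\in\B(\z^{-j-1})$ survives unchanged, but your incidental claim $H_{-j}=\tilde H_{-j}$ does not follow (nor is it asserted in the proposition).
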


Let $\L, U$ be as in Proposition \ref{Prop U}. Let  $r=1,5$ mod $6$.
The element $\phi(\La_r)=e^{-\ad U}(\La_r)$ does not depend on the choice of $U$ in Proposition \ref{Prop U}.

The element $\phi(\La_r)$
is of the form $\sum^n_{i=-\infty} \phi(\La_r)^i$, $\phi(\La_r)^i\in B(\g^i)$.
We set $\phi(\La_r)^+ = \sum^n_{i=0} \phi(\La_r)^i$,
$\phi(\La_r)^- = \sum_{i<0} \phi(\La_r)^i$.

\medskip

Let $r\in\Z_{>0}$ and  $r=1,5$ mod $6$.
The differential equation
\bean
\label{mKdVr}
\frac{\partial \L}{\partial t_r}  = [\phi(\La_r)^+, \L]
\eean
 is called the {\it  $r$-th  mKdV equation} of type $A^{(2)}_2$.

Equation \Ref{mKdVr} defines a vector field $\frac\der{\der t_r}$ on the space $\M(A^{(2)}_2)$ of Miura opers. For
all $r,s$, the vector fields $\frac\der{\der t_r}$, $\frac\der{\der t_s}$ commute, see \cite[Section 6]{DS}.

\begin{lem}[{\cite{DS}}]
\label{lem der}

We have
\bean
\label{mKdVr 0}
\frac{\partial\L}{\partial t_r}  = - \frac d{dx}\phi(\La_r)^0.
\eean
\end{lem}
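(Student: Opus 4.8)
The plan is to compute both sides of \Ref{mKdVr} restricted to the $\g^0$-component and show they agree. First I would apply the conjugation $e^{\ad U}$ that puts $\L$ into the normal form $\L_0=\der+\La+H$ of Proposition \ref{Prop U}. Since $e^{\ad U}$ is an automorphism of $\tilde\g$, the equation \Ref{mKdVr} is equivalent to its conjugate; and $e^{\ad U}(\phi(\La_r)^+)$ differs from $\La_r$ only by lower-degree terms, because $\phi(\La_r)=e^{-\ad U}(\La_r)$ by definition, so $e^{\ad U}(\phi(\La_r)^+) = \La_r - e^{\ad U}(\phi(\La_r)^-)$. The key point is then that $\La_r\in\z^r$ commutes with $\La\in\z^1$, so in the bracket $[e^{\ad U}(\phi(\La_r)^+),\L_0]$ the leading term $[\La_r,\der+\La+H]$ reduces to $[\La_r,\der+H] = -\frac{d}{dx}(\text{coeff of }\La_r\text{ part})$, which lives in $\z$; all of this sits in degrees $\ge r-1 > 0$ and therefore does not contribute to the $\g^0$-component.

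Next I would extract the $\g^0$-part. On the left side, $\frac{\partial\L}{\partial t_r}=\frac{\partial V}{\partial t_r}\in\B(\g^0)$, so the whole time-derivative is already in degree zero. On the right side, after the reduction above, the only contributions to degree zero come from the bracket of the negative-degree tail $e^{\ad U}(\phi(\La_r)^-)$ against $\La$ and against $\der$ and $H$. Tracking degrees carefully, the $\der$-part of $\L_0$ contributes $-\frac{d}{dx}$ applied to the degree-$0$ component of $\phi(\La_r)$ (after conjugating back), and the remaining brackets either vanish or land in nonzero degree. Undoing the conjugation and using that the $\g^0$-component is intrinsic (the normal-form reduction is by an automorphism that is the identity plus negative-degree terms, so it preserves the degree-$0$ piece up to terms that cancel), one lands on $\frac{\partial\L}{\partial t_r} = -\frac{d}{dx}\phi(\La_r)^0$.

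A cleaner route, which I would actually follow, is this: by Proposition \ref{Prop U} the operators $\L$ and $\L_0=\der+\La+H$ are related by $e^{\ad U}$, and the flow \Ref{mKdVr} is the compatibility (zero-curvature) condition for $\L$; conjugating, the flow on $\L_0$ is $\frac{\partial\L_0}{\partial t_r}=[(e^{\ad U}(\phi(\La_r)^+))^{\ge 0}_{\text{appropriate}},\L_0] + (\text{exact derivative terms})$. Since $\La_r$ is central in $\z$ and $H\in\B(\z)$, the bracket $[\La_r,\L_0]$ is purely $-\frac{d}{dx}$ of a $\z$-valued function of positive degree, hence vanishes in degree $0$. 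So the degree-$0$ part of $\frac{\partial\L_0}{\partial t_r}$ equals the degree-$0$ part of $-[e^{\ad U}(\phi(\La_r)^-),\L_0]$, and matching the $\der$-term of $\L_0$ produces exactly $-\frac{d}{dx}$ of the degree-$0$ coefficient, i.e. $-\frac{d}{dx}\phi(\La_r)^0$; one then notes $\frac{\partial V}{\partial t_r} = \frac{\partial\L}{\partial t_r}$ and that conjugation by $e^{\ad U}$ does not alter the degree-$0$ statement.

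The main obstacle I anticipate is the bookkeeping of which brackets survive into $\g^0$: one must be careful that $[e^{\ad U}(\phi(\La_r)^+),\L]$ is \emph{a priori} a sum of terms in many degrees, and that the potentially positive-degree and zero-degree contributions from $[\La_r,H]$, $[\La_r,\La]$, and the tail $[\phi(\La_r)^-,\La]$ are correctly isolated. The essential simplification making it work is exactly the abelianness of $\z$ together with $\La_r,\La\in\z$, so that the "large" part of the bracket collapses to an exact $x$-derivative of a $\z$-valued quantity concentrated in degrees $\ge r-1\ge 0$; since $r\ge 1$ with $r\equiv 1,5\ (6)$, in fact $r-1\ge 0$ with equality impossible in the relevant range, so nothing leaks into degree $0$ from there, and the only degree-$0$ survivor is the $\der$-commutator term. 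This is essentially \cite[Section 6]{DS} specialized to type $A^{(2)}_2$, and I would cite that computation for the routine parts.
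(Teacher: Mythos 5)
First, note that the paper does not actually prove this lemma; it simply refers to the proof of Proposition 6.8 in \cite{DS}, so the comparison is with the standard Drinfeld--Sokolov argument that you are reconstructing. Your core mechanism is the right one: $\phi(\La_r)=e^{-\ad U}(\La_r)$, the abelianness of $\z$, and a degree count that isolates the $\der$-commutator in degree zero. But several intermediate claims are wrong and obscure the argument. (i) You conjugate the flow equation itself; since $U$ depends on $t_r$, one has $\frac{\der}{\der t_r}e^{\ad U}(\L)\ne e^{\ad U}\bigl(\frac{\der\L}{\der t_r}\bigr)$, and the ``exact derivative terms'' you allude to are precisely the terms you would have to control. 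This is unnecessary: the only fact you need from the normal form is the pointwise algebraic identity $[\phi(\La_r),\L]=e^{-\ad U}\bigl([\La_r,\L_0]\bigr)=0$, which holds because $\La_r$ is a constant element of $\z$ while $\L_0-\der=\La+H$ is $\B(\z)$-valued and $\z$ is abelian; in particular $[\La_r,\der+H]=0$ exactly, not ``$-\frac{d}{dx}$ of a coefficient.'' (ii) The parenthetical that $e^{\ad U}$ ``preserves the degree-$0$ piece'' is false: $U$ has negative degrees, so for instance $[U_{-1},\La]$ contributes in degree $0$ --- that is exactly how $V$ gets absorbed when passing from $\L$ to $\L_0$. (iii) In the conjugated picture the degree-zero contribution does not come from the $\der$-commutator against the negative tail (that lands in negative degrees); it would come from bracketing the degree-$(-1)$ part against $\La$.

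The clean route stays entirely in the unconjugated picture: from $[\phi(\La_r),\L]=0$ one gets $[\phi(\La_r)^+,\L]=-[\phi(\La_r)^-,\L]$. The left-hand side has components only in degrees $\ge 0$ (bracketing $\g^{\ge0}$-valued terms against $\der$, against $V\in\B(\g^0)$ and against $\La\in\g^1$), while the right-hand side has components only in degrees $\le 0$. Hence $[\phi(\La_r)^+,\L]$ lies in $\B(\g^0)$ and equals its degree-zero component, namely $[\phi(\La_r)^0,\der+V]=-\frac{d}{dx}\phi(\La_r)^0+[\phi(\La_r)^0,V]$, where the last bracket vanishes because $\g^0=\h$ is abelian. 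Combined with $\frac{\der\L}{\der t_r}=\frac{\der V}{\der t_r}$, this is \Ref{mKdVr 0}. If you repair your write-up along these lines you recover exactly the cited Drinfeld--Sokolov computation.
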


See the proof of Proposition 6.8 in \cite{DS}.

\subsection{mKdV equations of type $A^{(1)}_2$}

A {\it Miura oper} of type $A^{(1)}_2$  is a differential operator of the form
\bean
\label{Miura la}
\L = \der + \Lambda + V
\eean
where $\Lambda = E_0+E_1+E_2\in\gA$ and $V\in \B(\gA^0)$.
Denote by $\mc{M}(A^{(1)}_2)$ the space of all Miura opers of type $A^{(1)}_2$.

\begin{prop}
[{\cite[Proposition 6.2]{DS}}]
\label{Prop U3}

For any Miura oper $\L$ of type $A^{(1)}_2$ there exists an element
$U=\sum_{i<0} U_i$, $U_i \in \B(\gA^{i})$, such that the operator $\L_0 = e^{\ad U}(\L)$ has the form
\bea
\L_0 = \der + \La + H,
\eea
where $H=\sum_{j<0} H_j, H_j\in \B(\z(A^{(1)}_2)^{j})$. If $U,\tilde U$ are two such elements, then
$e^{\ad U}e^{-\ad \tilde U} = e^{\ad T}$, where $T = \sum_{j<0} T_j$, $T_j\in \z(A^{(1)}_2)^j$.

\end{prop}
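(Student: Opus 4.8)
The plan is to run the standard Drinfeld--Sokolov dressing argument degree by degree, exactly as in the proof of Proposition~\ref{Prop U} for type $A^{(2)}_2$ (that is, \cite[Proposition 6.2]{DS}); the only algebra-specific input is the graded transversal decomposition
\[
\gA^{\,j}\ =\ \z(\At)^j\ \oplus\ [\La^{(1)},\gA^{\,j-1}],\qquad j\in\Z .
\]
First I would record this. Since $\ad\La^{(1)}\colon\gA^{j-1}\to\gA^{j}$ has kernel $\z(\At)^{j-1}$, and since $\dim\gA^{j}\in\{2,3\}$ and $\dim\z(\At)^{j}\in\{0,1\}$ as described in Section~\ref{sec KM}, a dimension count gives $\dim[\La^{(1)},\gA^{j-1}]+\dim\z(\At)^{j}=\dim\gA^{j}$ for every $j$. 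Transversality $[\La^{(1)},\gA^{j-1}]\cap\z(\At)^{j}=0$ is equivalent to $\ker(\ad\La^{(1)})^{2}=\ker(\ad\La^{(1)})$, which in the realization $\gA\cong\slt[\la,\la^{-1}]$ of Section~\ref{sec KM} is immediate because $\La^{(1)}$, viewed as a $3\times3$ matrix, is regular semisimple with separable characteristic polynomial $t^{3}-\la$; alternatively one simply quotes \cite[Section 3]{DS}.

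For existence of $U$: write $\L=\der+\La^{(1)}+V$ with $V\in\B(\gA^{0})$ and look for $U=\sum_{i\ge1}U_{-i}$, $U_{-i}\in\B(\gA^{-i})$. Using $[U,\der]=-\der(U)$ one expands
\[
e^{\ad U}(\L)=\der+\La^{(1)}+V-\der(U)+[U,\La^{(1)}]+[U,V]+\tfrac12[U,[U,\,\cdot\,]]+\cdots,
\]
the point being that the only term involving $U_{-m-1}$ that lands in degree $-m$ is $[U_{-m-1},\La^{(1)}]=-[\La^{(1)},U_{-m-1}]$, all other contributions of $U_{-m-1}$ having degree $\le-m-1$. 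One then proceeds from degree $0$ downward: once $e^{\ad(U_{-1}+\dots+U_{-m})}(\L)=\der+\La^{(1)}+H_{-1}+\dots+H_{-m+1}+W_{-m}+(\text{degree}<-m)$ with $H_{-i}\in\B(\z(\At)^{-i})$ and $W_{-m}\in\B(\gA^{-m})$ arbitrary, decompose $W_{-m}=W_{-m}^{\z}+[\La^{(1)},X]$ via the decomposition above and set $U_{-m-1}:=X$; the resulting operator has degree-$(-m)$ component $W_{-m}^{\z}=:H_{-m}\in\B(\z(\At)^{-m})$ and its components of degree $>-m$ are unchanged. The base step $m=0$ is $[\La^{(1)},U_{-1}]=V$, solvable because $\B(\gA^{0})=[\La^{(1)},\B(\gA^{-1})]$. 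The limiting formal series $U=\sum_{i\ge1}U_{-i}$ then gives $\L_{0}=e^{\ad U}(\L)=\der+\La^{(1)}+H$ with $H\in\B(\z(\At)^{<0})$.

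For the uniqueness clause: if $U,\tilde U$ both work, then since the transformations $e^{\ad U}$, $U=\sum_{i<0}U_{i}$, form a group, $e^{\ad U}e^{-\ad\tilde U}=e^{\ad T}$ for some $T=\sum_{i\ge1}T_{-i}$, $T_{-i}\in\B(\gA^{-i})$, with $e^{\ad T}(\der+\La^{(1)}+\tilde H)=\der+\La^{(1)}+H$. I would prove $T_{-m}\in\B(\z(\At)^{-m})$ by induction on $m$. Assuming $T_{-1},\dots,T_{-m+1}\in\z(\At)$, conjugate the last identity by $e^{-\ad(T_{-1}+\dots+T_{-m+1})}$; because $\z(\At)$ is abelian and commutes with $\La^{(1)}$, this keeps both sides of the form $\der+\La^{(1)}+(\z(\At)\text{-valued})$, preserves the degree-$(-m)$ part $T_{-m}$ of $T$, and kills the components of $T$ in degrees $-1,\dots,-m+1$, so we may assume $-m$ is the top degree present in $T$. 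Then the degree-$(-m+1)$ component of $e^{\ad T}(\der+\La^{(1)}+\tilde H)$ equals $\tilde H_{-m+1}+[T_{-m},\La^{(1)}]$ and must lie in $\B(\z(\At)^{-m+1})$; hence $[\La^{(1)},T_{-m}]\in\B(\z(\At)^{-m+1})\cap[\La^{(1)},\B(\gA^{-m})]=0$, i.e. $T_{-m}\in\B(\z(\At)^{-m})$. Summing and using once more that $\z(\At)$ is abelian yields $T\in\B(\z(\At)^{<0})$. The only real difficulty here is the bookkeeping of which commutators contribute to which graded component at each step; the single structural ingredient is the transversal decomposition $\gA^{j}=\z(\At)^j\oplus[\La^{(1)},\gA^{j-1}]$, and with it the whole proof is the verbatim analogue of that of Proposition~\ref{Prop U}.
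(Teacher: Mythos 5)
Your argument is correct and is exactly the standard Drinfeld--Sokolov dressing proof that the paper itself invokes by citation (the paper gives no independent proof of Proposition~\ref{Prop U3}, referring to \cite[Proposition 6.2]{DS}): the graded decomposition $\gA^{j}=\z(\At)^{j}\oplus[\La^{(1)},\gA^{j-1}]$, justified by the dimension count and semisimplicity of $\ad\La^{(1)}$, is the right structural input, and both the degree-by-degree construction of $U$ and the inductive identification of $T$ as $\z(\At)$-valued go through as you describe. Nothing further is needed.
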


Let $\L, U$ be as in Proposition \ref{Prop U3}. Let  $r\ne 0$ mod $3$.
The element $\phi((\La^{(1)})^r)=e^{-\ad U}( (\La^{(1)})^r)$ does not depend on the choice of $U$ in Proposition \ref{Prop U3}.

The element $\phi((\La^{(1)})^r)$
is of the form $\sum^n_{i=-\infty} \phi((\La^{(1)})^r)^i$, $\phi((\La^{(1)})^r)^i\in\B(\gA^i)$.
We set $\phi((\La^{(1)})^r)^+ = \sum^n_{i=0} \phi((\La^{(1)})^r)^i$,
$\phi((\La^{(1)})^r)^- = \sum_{i<0} \phi((\La^{(1)})^r)^i$.

\medskip

Let $r\in\Z_{>0}$ and  $r\ne 0$ mod $3$.
The differential equation
\bean
\label{mKdVr 3}
\frac{\partial \L}{\partial t_r}  = [\phi((\La^{(1)})^r)^+, \L]
\eean
 is called the {\it  $r$-th  mKdV equation} of type $A^{(1)}_2$.

Equation \Ref{mKdVr 3} defines a vector field $\frac\der{\der t_r}$ on the space $\M(A^{(1)}_2)$ of Miura opers of type $A^{(1)}_2$. For
all $r,s$, the vector fields $\frac\der{\der t_r}$, $\frac\der{\der t_s}$ commute, see \cite[Section 6]{DS}.

\subsection{Comparison of mKdV equations of types $A^{(2)}_2$ and $A^{(1)}_2$}
\label{sec comp}

Consider $\g(A^{(2)}_2)$ as a Lie subalgebra of $\gA$, see Section \ref{as a sub}. Let $\L$ be a Miura oper
of type $A^{(2)}_2$. Then $\L$ is a Miura oper of type $A^{(1)}_2$.

\begin{lem}
Let $r=1,5$ mod $6$, $r>0$. Let $\L^{A^{(2)}_2}(t_r)$ be the solution of the $r$-th mKdV equation of type $A^{(2)}_2$
with initial condition $\L^{A^{(2)}_2}(0)=\L$. Let $\L^{A^{(1)}_2}(t_r)$ be the solution of the $r$-th mKdV equation of type $A^{(1)}_2$
with initial condition $\L^{A^{(1)}_2}(0)=\L$. Then $\L^{A^{(2)}_2}(t_r)=\L^{A^{(1)}_2}(t_r)$ for all values of $t_r$.
\qed
\end{lem}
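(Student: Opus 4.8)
The plan is to reduce the mKdV flow of type $A^{(2)}_2$ to the corresponding flow of type $A^{(1)}_2$ by showing that the two right-hand sides $[\phi(\La_r)^+,\L]$ agree once $\g(A^{(2)}_2)$ is viewed inside $\gA$ via $\varrho$. The key observation is that under $\varrho$ we have $\La = \La^{(1)}$ and, by the Lemma preceding this statement (identifying $\La_r$ with a power of $\La^{(1)}$, equivalently with $(A_1)^r$ in the matrix realization), the cyclic element $\La_r\in\z^r$ maps to $(\La^{(1)})^r\in\z(A^{(1)}_2)^r$. So the only thing to check is that the conjugating element $U$ of Proposition \ref{Prop U} can be chosen to coincide with the conjugating element of Proposition \ref{Prop U3}, and that the resulting $\phi(\La_r)^+$ is the same element of $\tilde\g\subset\tilde{\g(\At)}$ in both cases.

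First I would observe that if $\L = \der + \La + V$ is a Miura oper of type $A^{(2)}_2$, then regarded in $\gA$ it is $\der + \La^{(1)} + \varrho(V)$ with $\varrho(V)\in\B(\gA^0)$, hence a Miura oper of type $A^{(1)}_2$; this is the sentence just before the lemma. Next, take $U=\sum_{i<0}U_i$, $U_i\in\B(\g^i)$, as in Proposition \ref{Prop U}, so that $e^{\ad U}(\L) = \der + \La + H$ with $H\in\B(\z^{<0})$. Pushing everything into $\gA$ via $\varrho$: since $\varrho$ is a graded Lie algebra embedding with $\varrho(\z(A^{(2)}_2))\subset\z(A^{(1)}_2)$, the element $\varrho(U)$ satisfies $e^{\ad \varrho(U)}(\L) = \der + \La^{(1)} + \varrho(H)$ with $\varrho(H)\in\B(\z(A^{(1)}_2)^{<0})$. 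Therefore $\varrho(U)$ is a legitimate choice of the conjugating element in Proposition \ref{Prop U3}. By the uniqueness-up-to-$\z$ clauses in both propositions, the elements $\phi(\La_r) = e^{-\ad U}(\La_r)$ and $\phi((\La^{(1)})^r) = e^{-\ad\varrho(U)}((\La^{(1)})^r)$ are well defined independently of these choices, and since $\varrho(\La_r)=(\La^{(1)})^r$ and $\varrho$ commutes with $\ad$, we get $\varrho(\phi(\La_r)) = \phi((\La^{(1)})^r)$; taking graded ($\ge 0$) parts, $\varrho(\phi(\La_r)^+) = \phi((\La^{(1)})^r)^+$. Hence the two mKdV vector fields, $[\phi(\La_r)^+,\L]$ and $[\phi((\La^{(1)})^r)^+,\L]$, coincide as derivations along $\L$, so by uniqueness of solutions of the flow ODE with the common initial condition $\L$, the solutions agree for all $t_r$.

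The one genuine subtlety — the step I expect to need care — is the compatibility of the two normal-form reductions: a priori Proposition \ref{Prop U3} allows $U$ to range over $\sum_{i<0}\B(\gA^i)$, a strictly larger space than $\sum_{i<0}\B(\g^i)$, so "$\varrho(U)$ is one admissible choice" does not by itself say the $A^{(1)}_2$-normalization of $\L$ has $H$ supported in $\varrho(\z(A^{(2)}_2))$; but that is exactly what we just verified directly ($\varrho(H)$ does lie in $\z(A^{(1)}_2)^{<0}$), and the independence of $\phi$ from the choice of $U$ (stated in both propositions) is what makes the comparison go through regardless. So the argument is essentially: (i) $\varrho$ is a graded embedding sending $\La\mapsto\La^{(1)}$, $\z\to\z(\At)$, and $\La_r\mapsto(\La^{(1)})^r$ (the latter from the matrix-power lemma above); (ii) conjugation data for the $A^{(2)}_2$ normal form pushes forward to valid conjugation data for the $A^{(1)}_2$ normal form; (iii) therefore $\phi(\La_r)^+$ pushes forward to $\phi((\La^{(1)})^r)^+$; (iv) the two flow equations on $\M(A^{(2)}_2)$ coincide, and uniqueness for ODEs finishes it.
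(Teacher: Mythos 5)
Your proposal is correct and follows essentially the same route as the paper: the paper's entire proof is the one-sentence observation that the element $U$ of Proposition \ref{Prop U} can be reused as the conjugating element in Proposition \ref{Prop U3}, which is precisely the point you identify and then justify in detail (via $\varrho$ being a graded embedding with $\varrho(\La)=\La^{(1)}$, $\varrho(\z(A^{(2)}_2))\subset\z(A^{(1)}_2)$, and $\varrho(\La_r)=(\La^{(1)})^r$). Your elaboration of why $\varrho(U)$ is admissible and why the choice-independence of $\phi$ makes the comparison work is a faithful filling-in of the details the paper leaves implicit.
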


\begin{proof}
The element $U$ in Proposition \ref{Prop U} which is used to construct the mKdV equation of type $A^{(2)}_2$ can be used also
to construct the mKdV equation of type $A^{(1)}_2$.
\end{proof}

\subsection{KdV equations of type $A^{(1)}_2$}
\label{sec KdV}

Let $\Bb((\der^{-1}))$ be the algebra of formal pseudodifferential operators of the form
 $a=\sum_{i \in \Z} a_i \der^i$, with $a_i \in \Bb$ and finitely many terms with $i > 0$.
 The relations in this algebra are
\bea
\partial^k u - u \partial^k = \sum_{i = 1}^\infty k(k-1)\dots(k-i+1)\frac{d^i u}{dx^i}\partial^{k-i}
\eea
for any $k\in\Z$ and $u\in\Bb$.
For $a = \sum_{i \in \Z} a_i \der^i \in \Bb((\der^{-1}))$, define $a^+ = \sum_{i \geq 0} a_i \der^i$.

Denote $\Bb[\der] \subset \Bb((\der^{-1}))$ the subalgebra of differential operators $a = \sum_{i=0}^m a_i \der^i$
with $m\in\Z_{\geq 0}$. Denote $\D \subset \Bb[\der]$ the affine subspace of the differential operators of the
form $L=\der^3 + u_1\der + u_0$.

For $L \in \D$, there exists a unique $L^{\frac{1}{3}} =\der + \sum_{i\leq 0}a_i\der^i \in \Bb((\der^{-1}))$
 such that  $(L^{\frac{1}{3}})^3 = L$. For $r\in\N$, we have $L^{\frac{r}{3}} = \der^r + \sum^{r-1}_{i=-\infty}
 b_i\der^i$, $b_i\in \B$.
 We set $(L^{\frac{r}{3}})^+ = \der^r + \sum^{r-1}_{i=0} b_i\der^i$.

For $r\in \Z_{>0}$, the differential equation
\beq
\label{KdVr}
\frac{\partial L}{\partial t_r} = [L,(L^{\frac{r}{3}})^+]
\eeq
is called the  {\it  $r$-th  KdV equation} of type $A^{(1)}_2$.

 Equation \Ref{KdVr} defines flows $\frac{\partial}{\partial t_r}$ on the space $\D$.
  For all $r,s $ the flows $\frac{\partial}{\partial t_r}$ and $\frac{\partial}{\partial t_s}$ commute, see \cite{DS}.

\subsection{Miura maps}
\label{sec Miura maps}

Let $\Ll = \der + \Lambda + V$ be a Miura oper of type $A^{(1)}_2$ with $V = \sum_{k = 1}^3 v_k e_{k,k}$, $\sum_{k=1}^3v_k=0$.  For $i=0,1,2$, define
the scalar differential operator $L_i=\der^3+u_{1,i}\der + u_{0,i}\in\D$ by the formula
\bean
\label{miuramap}
&&
L_0 = (\der - v_3)(\der - v_{2})(\der - v_1),
\quad
L_1 = (\der - v_1)(\der - v_3)(\der - v_2),
\\
&&
\phantom{aaaaaaaaaaaa}
L_2 = (\der - v_2)(\der - v_1)(\der - v_3).
\notag
\eean

\begin{thm}[{\cite[Proposition 3.18]{DS}}]
\label{thm mkdvtokdv}
Let a Miura oper $\Ll$ satisfy the mKdV equation \Ref{mKdVr 3} for some $j$.  Then for every $i=0,1,2,$ the differential operator
$L_i$ satisfies the KdV equation \Ref{KdVr}.
\end{thm}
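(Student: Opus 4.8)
The plan is to reduce the statement for $L_i$ of type $A^{(1)}_2$ to an identity purely about scalar pseudodifferential operators, using the factorizations \Ref{miuramap} to connect the matrix Miura oper with the three scalar operators. First I would recall the standard Drinfeld--Sokolov observation: if $\Ll = \der + \La^{(1)} + V$ is a Miura oper, then conjugating $\Ll$ by the appropriate lower-triangular unipotent matrix (built from $v_1,v_2,v_3$) turns it into a companion-type operator whose characteristic scalar operator is exactly one of the $L_i$; more precisely, the three cyclic orderings of the factors $(\der - v_k)$ correspond to the three choices of which simple root is omitted, i.e. to the three ``marked'' gradations of $\gln$ in which $\La^{(1)}$ plays the role of the principal cyclic element. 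So I would fix $i$, say $i=0$, and exhibit a matrix $g \in \Bb(N_-)$ (unipotent lower-triangular) such that $g\,\Ll\,g^{-1}$ acting on the first component reproduces $L_0 = (\der - v_3)(\der-v_2)(\der-v_1)$.

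Next I would translate the mKdV flow into this picture. By Proposition \ref{Prop U3} the dressing element $U$ that conjugates $\Ll$ to the ``diagonal'' form $\der + \La^{(1)} + H$, $H_j \in \z(A^{(1)}_2)^j$, is essentially unique, and $\phi((\La^{(1)})^r)^+$ is built from it. The key point is that the scalar object $(L_i^{r/3})^+$ is the exact scalar shadow of the matrix object $\phi((\La^{(1)})^r)^+$: under the identification of $\Bb((\der^{-1}))$ with the appropriate completion of $\z(A^{(1)}_2)$-modulo-conjugation, the operator $L_i^{1/3}$ corresponds to $\La^{(1)}$ and taking $r$-th powers and the $(\,\cdot\,)^+$ truncation commute with the correspondence. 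Concretely, I would show that the resolvent-type element $e^{-\ad U}((\La^{(1)})^r)$, when pushed through the gauge $g$ to the scalar side, becomes $(L_i^{r/3})^+$ plus a strictly-negative-order remainder, and that the remainder does not contribute to the commutator with $L_i$ in \Ref{KdVr}. This is the content that makes the zero-curvature equation $\der_{t_r}\Ll = [\phi((\La^{(1)})^r)^+,\Ll]$ map onto the Lax equation $\der_{t_r} L_i = [L_i,(L_i^{r/3})^+]$ (note the sign flip is absorbed because $\Ll$ sits on the left of the companion-operator factorization, matching the order $[L,\cdot]$ versus $[\cdot,\Ll]$).

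The concrete mechanism for the last step is the classical factorization identity: if $L = (\der - w_1)\cdots(\der - w_m)$ with $\sum w_k = 0$ then the matrix first-order operator $\der + \La^{(1)} + \mathrm{diag}(w_1,\dots,w_m)$ is gauge-equivalent, via a unipotent lower-triangular $g$ depending polynomially on the $w_k$ and their derivatives, to the companion form of $L$; and this $g$ is precisely the one intertwining the $\gln$-valued flow with the scalar flow. I would cite that $g$ depends only on $V$ (not on $r$), so the same $g$ works simultaneously for all flows $t_r$, which is exactly why a single element $U$ suffices in Proposition \ref{Prop U3} and why all $L_i$ evolve by KdV at once.

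The main obstacle I expect is bookkeeping the three cyclic permutations and the sign/ordering conventions: one must check that the particular orderings of factors chosen in \Ref{miuramap} are exactly the ones produced by the three marked gradations, so that $\La^{(1)}$ really is the principal element in each, and then verify that the truncation $(\cdot)^+$ on the Lie-algebra side (grading by $\deg$) matches the truncation $(\cdot)^+$ on the pseudodifferential side (grading by powers of $\der$) under the gauge $g$ — these two notions of ``positive part'' agree only after the conjugation, and proving that agreement honestly is where the real work lies. Everything else is the standard Drinfeld--Sokolov dictionary, so in the write-up I would state the gauge lemma, quote \cite[Section 3]{DS} for the matching of gradations and the $(\cdot)^+$ operations, and then the theorem follows by transporting \Ref{mKdVr 3} through $g$.
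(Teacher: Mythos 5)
The paper does not actually prove this statement: it is imported verbatim from Drinfel{$'$}d--Sokolov as \cite[Proposition 3.18]{DS}, so there is no in-paper argument to measure yours against. Your sketch is a faithful reconstruction of the standard Drinfel{$'$}d--Sokolov mechanism: unipotent lower-triangular gauge to companion form, the three cyclic orderings in \Ref{miuramap} matching the three choices of omitted vertex (i.e.\ the three subalgebras $\n^-_i$), identification of $L_i^{1/3}$ with the dressed $\La^{(1)}$, and the observation that $[L_i,(L_i^{r/3})^-]$ is a differential operator of order $\le 1$, so the negative tail does not obstruct the Lax form. You also correctly isolate the genuinely delicate point, namely that the truncation $(\cdot)^+$ in the standard grading of $\gA$ and the truncation by powers of $\der$ on the pseudodifferential side agree only after the gauge. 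One step in your outline is stated too quickly, however: the unipotent element $g$ taking $\Ll$ to companion form depends on $V$ and hence on $t_r$, so ``transporting \Ref{mKdVr 3} through $g$'' is not a literal conjugation of the flow equation; it produces an extra term $\bigl(\frac{\der}{\der t_r}e^{\ad w}\bigr)e^{-\ad w}$, which is tangent to the $i$-gauge orbit. The way this is absorbed in \cite{DS} is not equivariance of conjugation but invariance of $\frak m_i$ under $i$-gauge transformations (the paper's Theorem \ref{thm gaugemiura}): one shows the mKdV vector field at $\Ll$ agrees with the KdV-type vector field on $i$-opers modulo vectors tangent to the gauge orbit and then pushes forward by $d\frak m_i$, which kills that orbit direction. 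With that correction your outline is exactly the Drinfel{$'$}d--Sokolov proof; the rest is the bookkeeping you already identified.
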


We define the {\it  $i$-th Miura map} by the formula
\bea
\frak{ m}_i \ : \ \mc M(A^{(1)}_2)\  \to \ \D,
\quad
 \Ll \ \mapsto \ L_i,
 \eea
  see \Ref{miuramap}.

For $i=0,1,2$, an {\it $i$-oper } is a differential operator of the form
\bea
\Ll = \der + \Lambda + V + W
\eea
with $V \in \Bb(\gA^0)$ and $W \in \Bb(\n^-_i)$.
For $w \in \Bb(\n^-_i)$ and an $i$-oper $\Ll$, the differential operator
$e^{\text{ad} \, w}(\L)$
is an $i$-oper.  The $i$-opers $\Ll$ and $e^{\text{ad} \, w} (\Ll)$ are called {\it $i$-gauge equivalent.}
  A Miura oper is an $i$-oper for any $i$.

\begin{thm}
[{\cite[Proposition 3.10]{DS}}]
\label{thm gaugemiura}
If Miura opers $\Ll$ and $\tilde \Ll$ are $i$-gauge equivalent, then
$\frak m_i(\Ll) = \frak m_i(\tilde\Ll)$.
\end{thm}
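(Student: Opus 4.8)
\emph{Proof proposal.}
The plan is to realize both opers as $3\times 3$ matrix differential operators and to observe that $\frak m_i$ simply reads off one particular scalar coordinate of a horizontal section of that matrix operator, namely the coordinate that every $i$-gauge transformation leaves untouched.

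First I would pass to the matrix realization $\tau_0^{(0)}\colon\gA\to\slt[\la,\la^{-1}]$, so that a Miura oper becomes $\Ll=\der+\Lambda+V$ with $\Lambda=e_{2,1}+e_{3,2}+\la\,e_{1,3}$ and $V=\sum_{k=1}^{3}v_k e_{k,k}$, $\sum_k v_k=0$, and compute the image of $\n^-_i$. From $F_0\mapsto\la^{-1}e_{3,1}$, $F_1\mapsto e_{1,2}$, $F_2\mapsto e_{2,3}$, together with the one nontrivial bracket of the two generators of $\n^-_i$, one finds that $\n^-_i$ is a three-dimensional nilpotent subalgebra every element of which, as a $3\times 3$ matrix, has vanishing $(i{+}1)$-st column; this is visibly preserved under matrix multiplication. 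Hence for $w\in\B(\n^-_i)$ the matrix $e^{-w}=1-w+\frac{1}{2!}w^2-\dots$ has $(i{+}1)$-st column equal to that of the identity matrix, so right multiplication by $e^{-w}$ fixes the $(i{+}1)$-st coordinate of any row vector.

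Next I would describe $\frak m_i$ through horizontal sections. With the sign conventions of \cite{DS} the equation $\Ll\psi=0$ for a row vector $\psi=(\psi_1,\psi_2,\psi_3)$ is the linear system $\psi'=\psi(\Lambda+V)$, whose space of solutions is three-dimensional for each generic value of $\la$ (locally in $x$). Solving the three scalar equations successively expresses $\psi_j$, for $j\neq i{+}1$, as explicit differential polynomials in $\psi_{i+1}$ and leaves a single third-order equation $(L_i-\la)\psi_{i+1}=0$; a direct calculation identifies this $L_i$ with the ordered product in \Ref{miuramap} and shows that it carries no $\la$ — its $\der^2$-coefficient equals $-\on{tr}(\Lambda+V)=-\sum_k v_k=0$, so $L_i\in\D$. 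In particular $L_i-\la$ annihilates the three-dimensional space of functions $\{\psi_{i+1}:\psi\text{ a solution}\}$, and being monic of order $3$ it is the only such operator. This matching of the elimination with the product formula \Ref{miuramap} is the single genuine computation in the proof, and the step I would treat most carefully — above all the $\la$-bookkeeping — but it is routine.

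Finally, let $\tilde\Ll=e^{\ad w}(\Ll)=e^{w}\Ll e^{-w}$ with $w\in\B(\n^-_i)$, and write $\tilde\Ll=\der+\Lambda+\tilde V$. If $\psi$ solves $\psi'=\psi(\Lambda+V)$ then $\psi e^{-w}$ solves $(\psi e^{-w})'=(\psi e^{-w})(\Lambda+\tilde V)$, so $\psi\mapsto\psi e^{-w}$ is a bijection of solution spaces, and by the first step it preserves the $(i{+}1)$-st coordinate. Thus $\tilde L_i-\la$ annihilates the same three-dimensional space of functions as $L_i-\la$; a monic third-order operator is determined by its three-dimensional solution space, so $L_i-\la=\tilde L_i-\la$ and $\frak m_i(\Ll)=L_i=\tilde L_i=\frak m_i(\tilde\Ll)$. (Running the same elimination for $\Lambda+V+W$ in place of $\Lambda+V$ proves the statement for arbitrary $i$-opers, which is how \cite{DS} states it.) The main obstacle is entirely one of bookkeeping: fixing the conventions so that the elimination reproduces \Ref{miuramap} exactly and the residual $\la$ sits where claimed.
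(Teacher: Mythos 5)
Your argument is correct, and it is essentially the standard Drinfeld--Sokolov proof that the paper itself invokes by citation (the paper gives no proof of Theorem \ref{thm gaugemiura}, referring only to \cite[Proposition 3.10]{DS}): realize the oper as a first-order $3\times 3$ system, check that elimination in favor of the $(i{+}1)$-st component reproduces the ordered product \Ref{miuramap}, and observe that $i$-gauge transformations act by unipotent matrices whose $(i{+}1)$-st column is that of the identity, so they fix that component and hence the scalar operator. I verified the two computational claims you flagged — that $\n^-_i$ maps to matrices with vanishing $(i{+}1)$-st column (a set closed under multiplication) and that the row-vector elimination yields exactly $L_0,L_1,L_2$ of \Ref{miuramap} with vanishing $\der^2$-coefficient — and both check out.
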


\section{Critical points of master functions and generation of pairs of polynomials}
\label{sec gene}
In this section we follow \cite{MV1, MV2, VW}.
For functions $f(x),g(x)$,  we denote
\bea
\Wr(f,g) = f(x)g'(x)-f'(x)g(x)
\eea
the Wronskian determinant.

\subsection{Master function}
Choose a pair of nonnegative integers $\bs{k} = (k_0,k_1).$
Consider variables $ u = (u_i^{(j)})$, where $j = 0,1,$ and $i = 1,\dots,k_j$.
 The {\it master function} $\Phi(u; \bs k)$
is defined by the formula
\bean
\label{Master}
\phantom{aaa}
\Phi(u,\bs k) =
2  \sum_{i<i'}
\ln (u^{(0)}_i-u^{(0)}_{i'}) + 8 \sum_{i<i'}
\ln (u^{(1)}_i-u^{(1)}_{i'})
- 4 \sum_{i,i'}\ln (u^{(0)}_i-u^{(1)}_{i'}) .
\eean
The product of symmetric groups
$\Sigma_{\bs k}=\Si_{k_0}\times \Si_{k_1}$ acts on the set of variables
by permuting the coordinates with the same upper index.
The  function $\Phi$ is symmetric with
respect to the $\Si_{\bs k}$-action.

A point $u$ is a {\it critical point} if $d\Phi=0$ at $u$. In other words, $u$ is a critical point if
\bean
\label{Bethe eqn 1}
&&
\sum_{i' \neq i}\frac {2}{ u_i^{(0)} - u_{i'}^{(0)}}
-\sum_{i'=1}^{k_{1}} \frac{4}{ u_i^{(0)} -u_{i'}^{(1)}} = 0, \qquad i=1,\dots,k_0,
\\
\notag
&&
\sum_{i' \neq i}\frac {8}{ u_i^{(1)} - u_{i'}^{(1)}}
-\sum_{i'=1}^{k_{0}} \frac{4}{ u_i^{(1)} -u_{i'}^{(0)}} = 0, \qquad i=1,\dots,k_1.
\eean
The critical set is $\Si_{\bs k}$-invariant.
All orbits have the same cardinality $k_0! k_1!$\ .
We do not make distinction between critical points in the same orbit.

\begin{rem}
The master functions $\Phi(u,\bs k)$ for all vectors $\bs k$ are associated with the Kac-Moody algebra with  Cartan matrix
$\left(
\begin{matrix}
2 & -4 \\
-1 & 2
\end{matrix}
\right)$, which is dual to the Cartan matrix $\AT$, see \cite{SV, MV1, MV2}.
\end{rem}

\subsection{Polynomials representing critical points}
\label{PRCP}

Let $u = (u_i^{(j)})$ be a critical point of the master function
$\Phi$.
Introduce the pair of polynomials $ y=( y_0(x),  y_1(x))$,
\bean\label{y}
y_j(x)\ =\ \prod_{i=1}^{k_j}(x-u_i^{(j)}).
\eean
  Each polynomial is considered up to multiplication
by a nonzero number.
The pair defines a point in the direct product
$\PCN$.
We say that the pair $ y=( y_0(x),  y_1(x))$ {\it represents the
critical point } $u = (u_i^{(j)})$.

It is convenient to think that the pair $\bs{y}^\emptyset = (1, 1)$ of constant polynomials
represents  in $\PCN$ the critical point of the master function with no variables.
This corresponds to the case  $\bs k = (0, 0)$.

We say that a given pair $ y\in \PCN$ is {\it generic} if each polynomial $y_j(x)$ of the pair
has no multiple roots and
the polynomials  $y_0(x)$ and
 $y_{1}(x)$ have no common roots. If a pair represents a critical point, then it is generic,
 see \Ref{Bethe eqn 1}.  For example, the pair  ${y}^\emptyset $ is generic.

\subsection{Elementary generation}
\label{Elementary generation}

The pair is called {\it fertile} if  there exist polynomials $\tilde y_0, \tilde y_1\in \C[x]$ such that
the following two equations are  satisfied,
\bean
\label{wronskian-critical eqn 2}
\Wr(y_0, \tilde y_0) = y_1^4,
\qquad
\Wr(y_1,\tilde y_1) = y_0 .
\eean
These equations  can be written as
\bean
\label{Wr-Cr}
\Wr(y_j, \tilde{y_j}) = \prod_{i\ne j}y_i^{-a_{i,j}} , \qquad j=0,1.
\eean

For  example, the pair $y^\emptyset$ is fertile and $\tilde y_0=x+c_1, \tilde y_1=x+c_2$, where $c_1,c_2$ are arbitrary numbers.

Assume that a pair of polynomials $y=(y_0,y_1)$ is fertile.
Equation $\Wr(y_0,\tilde y_0) = y_1^4$ is a first order inhomogeneous differential equation with respect to $\tilde y_0$.
Its solutions are
\bean
\label{deG 0}
\tilde y_0 = y_0\int \frac{y_1^4}{y_0^2} dx + c y_0,
\eean
where $c$ is any number.
The pairs
\bean
\label{simple 0}
y^{(0)}(x,c) = (\tilde y_0(x,c), y_1(x))
 \in   \PCN \
\eean
 form a one-parameter family.  This family  is called
 the {\it generation  of pairs  from $ y$ in the $0$-th direction}.
 A pair of this family is called an {\it immediate descendant} of $y$ in the $0$-th direction.

Similarly, equation $\Wr(y_1, \tilde y_1) = y_0$ is a first order inhomogeneous differential equation with respect to $\tilde y_1$.
Its solutions are
\bean
\label{deG 1}
\tilde y_1 = y_1\int \frac{y_0}{y_1^2} dx + c y_1,
\eean
where $c$ is any number.
The pairs
\bean
\label{simple 1}
y^{(1)}(x,c) = (y_0(x), \tilde y_1(x,c))
\quad \in  \quad \PCN \
\eean
 form a one-parameter family.  This family  is called
 the {\it generation  of pairs  from $ y$ in the $1$-st direction}.
A pair of this family is called an {\it immediate descendant} of $y$ in the $1$-st direction.

\begin{thm}
[\cite{MV1}]
\label{fertile cor}
${}$

\begin{enumerate}
\item[(i)]
A generic pair $y = (y_0, y_1)$
represents a critical point of a master function
if and only if $y$ is fertile.

\item[(ii)] If $ y$ represents a critical point,
then for any $c\in\C$ the pairs $y^{(0)}(x,c)$ and $y^{(1)}(x,c)$
are fertile.

\item[(iii)]
If $y$ is generic and fertile, then for almost all values of the parameter
 $c\in \C$ both  pairs $y^{(0)}(x,c)$ and $y^{(1)}(x,c)$ are generic.
The exceptions form a finite set in $\C$.

\item[(iv)]

Assume that a sequence ${y}_i, i = 1,2,\dots$ of fertile pairs
has a limit ${y}_\infty$ in $\PCN$ as $i$ tends to infinity.
\begin{enumerate}
\item[(a)]
Then the limiting pair ${y}_\infty$ is fertile.
\item[(b)]
For $j = 0,1$, let ${y}_\infty^{(j)}$ be an immediate
descendant of ${y}_\infty$.
Then for $j=0,1$, there exist immediate descendants
${y}_i^{(j)}$ of ${y}_i$ such that ${y}_\infty^{(j)}$
is the limit of ${y}_i^{(j)}$ as $i$ tends to infinity.

\end{enumerate}

\end{enumerate}
\end{thm}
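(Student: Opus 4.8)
The plan is to prove Theorem \ref{fertile cor} by unwinding the two statements---(i) the correspondence between generic critical points and generic fertile pairs, and (ii)--(iv) the behavior of fertility under the elementary generation construction and under limits---in that order, reducing everything to the defining system \Ref{Bethe eqn 1} and to elementary properties of the Wronskian.

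\textbf{Proof of (i).}\ First I would show that for a \emph{generic} pair $y=(y_0,y_1)$, the polynomial $\tilde y_0$ produced by solving $\Wr(y_0,\tilde y_0)=y_1^4$ is actually a polynomial (not merely a function with logarithms) precisely when a certain residue condition holds, and that this residue condition is equivalent to the first half of \Ref{Bethe eqn 1}. Concretely: writing $\tilde y_0 = y_0\int (y_1^4/y_0^2)\,dx + cy_0$, the integrand $y_1^4/y_0^2$ has double poles at the roots $u_i^{(0)}$ of $y_0$, and $\int$ is polynomial-times-$y_0$ exactly when the residue of $y_1^4/y_0^2$ at each $u_i^{(0)}$ vanishes; computing that residue via logarithmic derivatives gives $\sum_{i'\ne i} 2/(u_i^{(0)}-u_{i'}^{(0)}) - \sum_{i'} 4/(u_i^{(0)}-u_{i'}^{(1)})$, which is the $i$-th equation of the first group in \Ref{Bethe eqn 1}. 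The same computation with $\Wr(y_1,\tilde y_1)=y_0$ reproduces the second group. So $y$ is fertile $\iff$ both residue systems vanish $\iff$ $u$ is a critical point. This is essentially the argument of \cite{MV1, ScV}; I would cite those and reproduce the residue calculation since it is short.

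\textbf{Proof of (ii) and (iii).}\ For (ii), suppose $y$ represents a critical point; I must show each $y^{(0)}(x,c)$ is fertile, i.e.\ that there exist polynomials solving $\Wr(\tilde y_0,\tilde{\tilde y}_0)=y_1^4$ and $\Wr(y_1,\tilde{\tilde y}_1)=\tilde y_0$. The first is automatic since $y_0$ already solves $\Wr(y_0,\tilde y_0)=y_1^4$, so $y_0$ itself is an immediate descendant of $\tilde y_0$ in the $0$-th direction (the generation relation is symmetric in the two entries of the Wronskian up to sign). The nontrivial point is the second equation, and this is where one invokes the key algebraic identity from \cite{MV1}: there is a ``Wronskian identity'' expressing a solution of $\Wr(y_1,\tilde{\tilde y}_1)=\tilde y_0$ polynomially in terms of $y_0,y_1,\tilde y_0$ and the data of the original fertile pair---this is the heart of the generation procedure and I expect it to be the main obstacle, as it requires the precise bookkeeping of degrees and the bilinear relations among $y_0,\tilde y_0,y_1$. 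For (iii), fertility of $y^{(0)}(x,c)$ is already known from (ii); genericity fails only when $\tilde y_0(x,c)$ acquires a multiple root or a common root with $y_1$, and since $\tilde y_0(x,c)$ is a polynomial in $x$ whose coefficients depend affinely (hence polynomially) on $c$, the discriminant of $\tilde y_0$ and the resultant $\mathrm{Res}(\tilde y_0,y_1)$ are polynomials in $c$ that are not identically zero (one checks they are nonzero for one value, e.g.\ by a degree/leading-term argument), so they vanish only on a finite set.

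\textbf{Proof of (iv).}\ Part (a): fertility is a closed condition. The pair $y_i$ fertile means the residue systems above vanish; these residues are, after clearing denominators, polynomial relations among the coefficients of $y_0,y_1$, so they persist in the limit $y_\infty$---care is needed because as roots collide the ``residue'' formulation degenerates, so instead I would phrase fertility directly as: there exist $\tilde y_0,\tilde y_1\in\C[x]$ of the correct degrees satisfying \Ref{wronskian-critical eqn 2}, and note that the space of such triples $(y_0,y_1,\tilde y_0,\tilde y_1)$ with fixed degree bounds is closed in the relevant affine space (it is cut out by the vanishing of the coefficients of $\Wr(y_0,\tilde y_0)-y_1^4$ and $\Wr(y_1,\tilde y_1)-y_0$), and projection from this closed set is proper after suitable normalization, so a limit of fertile pairs is fertile. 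Part (b): given an immediate descendant $y_\infty^{(j)}$ determined by a parameter value $c_\infty$ via \Ref{deG 0} or \Ref{deG 1}, I would choose $c_i\to c_\infty$ and set $y_i^{(j)}$ to be the corresponding descendant of $y_i$; the explicit integral formulas \Ref{deG 0}, \Ref{deG 1} show $y_i^{(j)}$ depends continuously on $(y_i,c_i)$ wherever the construction is defined (again after fixing normalizations of the polynomials), giving $y_i^{(j)}\to y_\infty^{(j)}$. The only delicate point is matching normalizations---each polynomial is defined only up to scalar---so I would fix representatives by, say, prescribing the leading coefficient, and track how the generation formulas interact with that choice.
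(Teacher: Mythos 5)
First, a remark on the comparison itself: the paper contains no proof of Theorem \ref{fertile cor} --- it is imported from \cite{MV1} with only the citation --- so your sketch can only be measured against the argument of \cite{MV1}. Your parts (i), (iii), (iv) follow that argument in substance. In (i) the residue computation is exactly right: fertility in the $j$-th direction is equivalent to exactness of $\prod_{i\ne j}y_i^{-a_{i,j}}/y_j^2$, hence to the vanishing of its residues, and the residue at a root of $y_j$ is the corresponding equation of \Ref{Bethe eqn 1}. In (iii), the one detail to add is that a multiple root $v$ of $\tilde y_0$ forces $\Wr(y_0,\tilde y_0)(v)=0$, hence $y_1(v)=0$; so the discriminant locus sits inside the resultant locus and only $\mathrm{Res}(\tilde y_0(\cdot,c),y_1)\not\equiv 0$ needs checking, which follows since $y_0$ and $y_1$ have no common roots. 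In (iv)(a) your ``properness after normalization'' is the crux, not a throwaway: the clean justification is that for $y_0\ne 0$ the linear map $w\mapsto \Wr(y_0,w)$ on polynomials of bounded degree has kernel exactly $\C y_0$, hence constant rank, so the condition $y_1^4\in\on{Im}$ is cut out by vanishing of minors of an augmented matrix and is closed; without the constant-rank observation the limit of witnesses can degenerate into the kernel and the argument stalls.

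The genuine gap is in (ii), and you flag it yourself without closing it. That the descendant $(\tilde y_0,y_1)$ is fertile in the $0$-th direction is free (take $-y_0$); the content of (ii) is fertility in the \emph{other} direction, i.e.\ the existence of $w$ with $\Wr(y_1,w)=\tilde y_0$, and ``invoke the key algebraic identity from \cite{MV1}'' is not an argument. The missing step is a second residue computation and should be written out. Since $y$ represents a critical point it is generic, so each root $v$ of $y_1$ is simple and $y_0(v)\ne 0$; write $y_1=(x-v)q$. The residue of $\tilde y_0/y_1^2$ at $v$ equals $\bigl(\tilde y_0'(v)q(v)-2\tilde y_0(v)q'(v)\bigr)/q(v)^3$. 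Because $y$ is fertile in the $1$-st direction, the analogous residue of $y_0/y_1^2$ vanishes, i.e.\ $y_0'(v)q(v)-2y_0(v)q'(v)=0$; and evaluating $\Wr(y_0,\tilde y_0)=y_1^4$ at $v$ gives $y_0(v)\tilde y_0'(v)=y_0'(v)\tilde y_0(v)$, so $\tilde y_0'(v)=y_0'(v)\tilde y_0(v)/y_0(v)$. Substituting shows the residue is $\tilde y_0(v)/y_0(v)$ times a vanishing quantity, hence zero, for \emph{every} value of $c$; then $w=y_1\int(\tilde y_0/y_1^2)\,dx$ is a polynomial and $(\tilde y_0,y_1)$ is fertile. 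The symmetric computation with the exponent $4$ (where one also notes that if $\tilde y_1(u)=0$ at a root $u$ of $y_0$ then $\tilde y_1^4/y_0^2$ is already regular at $u$) handles the $1$-st direction. Without this computation the induction underlying the whole multistep generation procedure --- and hence everything built on Lemma \ref{lem gen procedure} --- is unsupported.
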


\subsection{Degree increasing generation}
\label{Degree increasing generation}

Let $y=(y_0,y_1)$ be a generic fertile pair of polynomials.
For $j=0,1$, define $k_j=\deg y_j$.

The polynomial $\tilde y_0$ in \Ref{wronskian-critical eqn 2}
is of degree $k_0$ or
$\tilde k_0=4k_1+ 1 - k_0$. We say that the  generation $(y_0,y_1) \to (\tilde y_0,y_1)$ is
{\it  degree increasing } in the $0$-th direction  if $\tilde k_0 > k_0$. In that
case $\deg \tilde y_0=\tilde k_0$ for all $c$.

If the generation is degree increasing in the $0$-th direction we normalize family
 \Ref{simple 0} and construct a map
$ Y_{y,0} : \C \to (\C[x])^2$ as follows. First we multiply the polynomials $y_0,y_1$ by numbers to make them monic.
 Then we choose the monic polynomial $ y_{0,0}$ satisfying the equation $\Wr(y_0, y_{0,0})$
 $=\, a\,y_1^4$, for some $a\in\C^\times$,
 and such that the coefficient of $x^{k_0}$ in $y_{0,0}$ equals zero.
Such $y_{0,0}$ exists and is unique.
 Set
 \bean
 \label{tilde y0}
 \tilde y_0(x,c)=y_{0,0}(x) + cy_0(x)
 \eean
 and define
\bean
\label{normalized 0}
 Y_{y,0} \ :\ \C\ \to\ (\C[x])^2, \qquad c \mapsto\ y^{(0)}(x,c) = (\tilde y_0(x,c),y_1(x)).
 \eean
Both polynomials of the pair $y^{(0)}(x,c)$ are monic.

The polynomial $\tilde y_1$ in \Ref{wronskian-critical eqn 2}
is of degree $k_1$ or
$\tilde k_1= k_0 + 1 - k_1$. We say that the  generation $(y_0,y_1) \to (y_0,\tilde y_1)$ is
{\it  degree increasing } in the $1$-st direction  if $\tilde k_1 > k_1$. In that
case $\deg \tilde y_1=\tilde k_1$ for all $c$.

If the generation is degree increasing in the $1$-st direction we normalize family
 \Ref{simple 1} and construct a map
$ Y_{y,1} : \C \to (\C[x])^2$ as follows. First we multiply the polynomials $y_0,y_1$ by numbers to make them monic.
 Then we choose the monic polynomial $ y_{1,0}$ satisfying the equation $\Wr(y_1, y_{1,0})$
 $=\,a\,y_0$, for some $a\in\C^\times$,
 and such that the coefficient of $x^{k_1}$ in $y_{1,0}$ equals zero. Such $y_{1,0}$ exists and is unique.
 Set
 \bean
 \label{tilde y1}
 \tilde y_1(x,c)=y_{1,0}(x) + cy_1(x)
 \eean
 and define
\bean
\label{normalized 1}
 Y_{y,1} \ :\ \C\ \to\ (\C[x])^2, \qquad c \mapsto\ y^{(1)}(x,c) = (y_0(x),\tilde y_1(x,c)).
 \eean
Both polynomials of the pair $y^{(1)}(x,c)$ are monic.

\subsection{Degree-transformations and generation of vectors of integers}
\label{sec degree transf}

The degree-transformations
\bean
\label{l-transformation}
&&
\bs k=(k_0,k_1)\ \mapsto \
\bs k^{(0)}=(4k_1+1-k_0,k_1),
\\
\notag
&&
\bs k=(k_0,k_1)\ \mapsto \
\bs k^{(1)}=(k_0,k_0+1-k_1)
\eean
correspond to the shifted action of reflections $ w_0,w_1\in W_{\! A_{2}^{(2)}}$,
where $W_{A_{2}^{(2)}}$ is the Weyl group of type ${A_{2}^{(2)}}$ and $w_0,w_1$
are its standard generators, see Lemma 3.11 in \cite{MV1} for more detail.

We take formula \Ref{l-transformation} as the definition of {\it degree-transformations}:
\bean
\label{s_i l-transf}
&&
w_0\ :\ \bs k=(k_0,k_1)\ \mapsto \
\bs k^{(0)}=(4k_1+1-k_0,k_1),
\\
\notag
&&
w_1\ :\ \bs k=(k_0,k_1)\ \mapsto \
\bs k^{(1)}=(k_0,k_0+1-k_1)
\eean
acting on arbitrary vectors $\bs k=(k_0,k_1)$.

\medskip

We start with the vector $\bs k^\emptyset=(0,0)$ and a sequence $J=(j_1,j_2,\dots, j_m)$ of
integers, where $J= (0,1,0,1,0,1,\dots)$ or  $J=(1,0,1,0,1,0,\dots)$.
We apply the corresponding degree transformations to  $\bs k^\emptyset$ and obtain
the sequence of vectors $\bs k^\emptyset,$\ $  \bs k^{(j_1)} =w_{j_1}\bs k^\emptyset, $\ $
  \bs k^{(j_1,j_2)} = w_{j_2} w_{j_1}\bs k^\emptyset$,\dots,
\bean
\label{gen vector}
\bs k^J  = w_{j_m}\dots w_{j_2} w_{j_1}\bs k^\emptyset .
\eean
We say that the {\it vector $\bs k^J $ is generated from $(0,\dots,0)$ in the direction of $J$}.

\medskip
For example, for $J=(0,1,0,1,0,1)$ we get the sequence
$(0,0),$ $ (1,0),$ $ (1,2),$ $(8,2), (8,7), $ $(21,7), (21,15)$. If
$J=(1,0,1,0,1,0)$, then the sequence  is
$(0,0),$ $ (0,1), (5,1), (5,5),$ $ (16,5), (16,12), (33,12)$.

\medskip
We call a sequence $J$ {\it degree increasing} if for every $i$ the transformation
$w_{j_i}$ applied to  $  w_{j_{i-1}}\dots w_{j_1}\bs k^\emptyset$
increases the $j_i$-th coordinate.

\begin{lem}
\label{lem cyclic increas}
If $J=(0,1,0,1,0,1,\dots)$, then after $2n$ steps of this procedure the degree vector is
$(3n^2-2n,(3n^2+n)/2)$. If $J=(1,0,1,0,1,0,\dots)$, then after $2n+1$ steps of this procedure the degree vector is
$(3n^2+2n,(3n^2+5n+2)/2)$.
\qed
\end{lem}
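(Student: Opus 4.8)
The plan is to reduce both assertions to one linear computation followed by an easy induction. First I would compute the composite degree-transformation $\Psi := w_1\circ w_0$, where the order of composition is forced by the convention $\bs k^J=w_{j_m}\cdots w_{j_1}\bs k^\emptyset$ in \Ref{gen vector} (so that $w_0$ is applied first). From \Ref{s_i l-transf},
\[
\Psi(k_0,k_1)\ =\ w_1\bigl(4k_1+1-k_0,\ k_1\bigr)\ =\ \bigl(4k_1+1-k_0,\ 3k_1+2-k_0\bigr).
\]
For $J=(0,1,0,1,\dots)$ of length $2n$, the image of $\bs k^\emptyset=(0,0)$ is exactly $\Psi^{\,n}(0,0)$, because the $2n$ factors $w_1,w_0,w_1,w_0,\dots$ group into $n$ blocks $w_1\circ w_0$. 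For $J=(1,0,1,0,\dots)$ of length $2n+1$, the first factor applied is $w_1$, which sends $(0,0)$ to $(0,1)$, and the remaining $2n$ factors again group into $n$ blocks $w_1\circ w_0$; hence the image is $\Psi^{\,n}(0,1)$.

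The second step is to prove by induction on $n$ the two closed formulas
\[
\Psi^{\,n}(0,0)=\Bigl(3n^2-2n,\ \frac{3n^2+n}{2}\Bigr),
\qquad
\Psi^{\,n}(0,1)=\Bigl(3n^2+2n,\ \frac{3n^2+5n+2}{2}\Bigr).
\]
The base cases $n=0$ are immediate. For the inductive step one substitutes the assumed value of $\Psi^{\,n}(\cdot)$ into the displayed formula for $\Psi$ and checks that the result is the claimed value of $\Psi^{\,n+1}(\cdot)$; this amounts to four polynomial identities in $n$, for instance $4\cdot\frac{3n^2+n}{2}+1-(3n^2-2n)=3(n+1)^2-2(n+1)$ and $3\cdot\frac{3n^2+n}{2}+2-(3n^2-2n)=\frac{3(n+1)^2+(n+1)}{2}$, and the two analogous ones in the $(0,1)$-case. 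Integrality of the half-expressions is automatic, since $3n^2+n=n(3n+1)$ and $3n^2+5n+2=(n+1)(3n+2)$ are even for every $n$. Combining with the first step yields the degree vectors claimed in the lemma.

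The argument is entirely elementary, so I do not anticipate a genuine obstacle; the only points that need care are getting the composition order in $\Psi=w_1\circ w_0$ to match the order of application in \Ref{gen vector}, and, in the odd case, peeling off the initial $w_1$ before iterating the block $w_1\circ w_0$. As a byproduct the computation also shows that each step strictly increases the coordinate it acts on, so the alternating sequences $J$ are degree increasing in the sense defined above.
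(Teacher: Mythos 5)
Your proof is correct: the composition order matches the convention $\bs k^J=w_{j_m}\cdots w_{j_1}\bs k^\emptyset$, the formula $\Psi(k_0,k_1)=(4k_1+1-k_0,\,3k_1+2-k_0)$ is right, and the inductive identities and the integrality check all verify (they also reproduce the sample sequences listed in the paper). The paper states this lemma without proof, and your argument is exactly the elementary computation it leaves to the reader, so there is nothing to compare beyond noting that your closing remark about strict degree increase is indeed what Corollary \ref{cor deg incr} asserts.
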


\begin{cor}
\label{cor deg incr}
Both sequences $(0,1,0,1,0,1,\dots)$ and $(1,0,1,0,1,0,\dots)$ are degree increasing.
\qed
\end{cor}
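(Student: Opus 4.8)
The plan is to read off Corollary \ref{cor deg incr} directly from the closed-form expressions in Lemma \ref{lem cyclic increas}. Recall that a sequence $J$ is degree increasing precisely when, at each step $i$, the reflection $w_{j_i}$ raises the $j_i$-th coordinate of the current degree vector $w_{j_{i-1}}\dots w_{j_1}\bs k^\emptyset$. For each of the two alternating sequences the odd-numbered steps are applications of one generator and the even-numbered steps applications of the other, so it suffices to compute one generic $w_0$-increment and one generic $w_1$-increment in each case and check that both are strictly positive.

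First I would take $J=(0,1,0,1,\dots)$. By Lemma \ref{lem cyclic increas} the vector after $2n$ steps is $\bs k=(3n^2-2n,\,(3n^2+n)/2)$, and step $2n+1$ is an application of $w_0$. It changes the zeroth coordinate to $4\cdot\frac{3n^2+n}{2}+1-(3n^2-2n)=3n^2+4n+1$, an increment of $6n+1>0$ (the case $n=0$ covers the very first step $w_0:(0,0)\mapsto(1,0)$). Step $2n+2$ is an application of $w_1$ to the resulting vector $(3n^2+4n+1,\,(3n^2+n)/2)$: it changes the first coordinate to $(3n^2+4n+1)+1-\frac{3n^2+n}{2}=\frac{3n^2+7n+4}{2}$, an increment of $3n+2>0$. (As a consistency check this equals the first coordinate predicted by Lemma \ref{lem cyclic increas} after $2(n+1)$ steps.) Hence every step of $J=(0,1,0,1,\dots)$ is degree increasing.

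The sequence $J=(1,0,1,0,\dots)$ is handled in exactly the same way from the second formula of Lemma \ref{lem cyclic increas}: the initial step is $w_1:(0,0)\mapsto(0,1)$ with increment $1>0$; starting from the vector $(3n^2+2n,\,(3n^2+5n+2)/2)$ after $2n+1$ steps, the next $w_0$ step sends the zeroth coordinate to $3n^2+8n+5$, an increment of $6n+5>0$, and the following $w_1$ step sends the first coordinate to $\frac{3n^2+11n+10}{2}$, an increment of $3n+4>0$. Thus both alternating sequences are degree increasing.

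There is essentially no obstacle here: all the substance is already contained in Lemma \ref{lem cyclic increas}, and what remains is the one-line observation that the elementary increments $6n+1,\ 3n+2$ (respectively $6n+5,\ 3n+4$, together with the first increment $1$) are positive for every $n\ge 0$. If one preferred to avoid quoting the explicit quadratic formulas, one could instead argue by induction on $\bs k^{(j_1,\dots,j_i)}$, showing that the alternating generation never leaves the region of the $(k_0,k_1)$-plane on which $w_0$ and $w_1$ act in a degree-increasing manner; but invoking the lemma is the most economical route.
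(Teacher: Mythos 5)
Your proof is correct and follows exactly the route the paper intends: the corollary is stated with no written argument precisely because it is meant to be read off from the closed-form degree vectors of Lemma \ref{lem cyclic increas}, and your computation of the increments $6n+1$, $3n+2$ (resp.\ $1$, $6n+5$, $3n+4$) makes that deduction explicit and verifies they are all positive. Nothing is missing.
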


The sequences $(0,1,0,1,0,1,\dots)$ and $(1,0,1,0,1,0,\dots)$ will be called the {\it basic sequences}.

\subsection{Multistep generation}
\label{sec generation procedure}

Let $J = (j_1,\dots,j_m)$ be a basic sequence.
Starting from $y^\emptyset=(1,1)$ and $J$, we construct by induction on $m$
a map
\bea
Y^J : \C^m \to (\C[x])^2.
\eea
If $J=\emptyset$, the map $Y^\emptyset$ is the map $\C^0=(pt)\ \mapsto y^\emptyset$.
If $m=1$ and $J=(j_1)$,  the map
$Y^{(j_1)} :  \C \to (\C[x])^N$ is given by one of the formulas \Ref{normalized 0} or \Ref{normalized 1}
for $y=y^\emptyset$ and $j=j_1$. More precisely, equation $\Wr(y_0,\tilde y_{0}) = y_1^4$
takes the form $\Wr(1,\tilde y_{0}) =1$. Then $\tilde y_{0,0}= x$ and
\bea
Y^{(0)}\ :\ \C \mapsto (\C[x])^2, \qquad
c \mapsto (x+c, 1).
\eea
 By Theorem \ref{fertile cor}
all pairs in the image are fertile and almost all pairs are generic
(in this example all pairs are generic).  Similarly, equation $\Wr(y_1,\tilde y_{1}) = y_0$
takes the form $\Wr(1,\tilde y_{1}) =1$. Then $\tilde y_{1,0}= x$ and
\bea
Y^{(1)}\ :\ \C \mapsto (\C[x])^2, \qquad
c \mapsto (1,x+c).
\eea

Assume that for ${\tilde J} = (j_1,\dots,j_{m-1})$,  the map
$Y^{{\tilde J}}$ is constructed. To obtain  $Y^J$ we apply the
generation procedure in the $j_m$-th
direction to every pair of the image of $Y^{{\tilde J}}$. More precisely, if
\bean
\label{J'}
Y^{{\tilde J}}\ : \
{\tilde c}=(c_1,\dots,c_{m-1}) \ \mapsto \ (y_0(x,{\tilde c}), y_1(x,{\tilde c})),
\eean
then
\bean
\label{Ja}
&&
Y^{J} : \C^m \mapsto (\C[x])^2, \quad
({\tilde c},c_m) \mapsto
(y_{0,0}(x,{\tilde c}) + c_m y_{0}(x,{\tilde c}), y_1(x,{\tilde c})),\qquad \on{if}\, j_m=0,
\\
\notag
&&
Y^{J} : \C^m \mapsto (\C[x])^2, \quad
({\tilde c},c_m) \mapsto
(y_0(x,{\tilde c}), y_{1,0}(x,{\tilde c}) + c_m y_{1}(x,{\tilde c})),\qquad \on{if}\, j_m=1,
\eean
see formulas \Ref{tilde y0}, \Ref{tilde y1}.
The map  $Y^J$ is called  the {\it generation  of pairs   from $ y^\emptyset$ in the $J$-th direction}.

\begin{lem}
\label{lem gen procedure}
All pairs in the image of $Y^J$ are fertile and almost all pairs are generic. For any $c\in\C^m$
the pair $Y^J(c)$ consists of monic polynomials. The degree vector of this pair
equals $\bs k^J$, see \Ref{gen vector}.
\qed
\end{lem}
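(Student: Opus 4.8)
I would prove the lemma by induction on the length $m$ of the basic sequence $J=(j_1,\dots,j_m)$. For $m=0$ the pair $Y^\emptyset=y^\emptyset=(1,1)$ is monic, fertile and generic, with degree vector $(0,0)=\bs k^\emptyset$, so all three assertions hold. For the inductive step fix $m\ge 1$, put $\tilde J=(j_1,\dots,j_{m-1})$, and assume the lemma for $\tilde J$: every pair $(y_0(x,\tilde c),y_1(x,\tilde c))$ in the image of $Y^{\tilde J}$ is monic and fertile, is generic for $\tilde c$ outside a proper Zariski-closed subset of $\C^{m-1}$, and has the single degree vector $\bs k^{\tilde J}=w_{j_{m-1}}\dots w_{j_1}\bs k^\emptyset$. (For $m=1$, $\tilde J=\emptyset$, and this reproduces the explicit maps $Y^{(0)}:c\mapsto(x+c,1)$ and $Y^{(1)}:c\mapsto(1,x+c)$ of Section \ref{sec generation procedure}.)

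\emph{Step 1: monicity and the degree vector.} Since $J$ is a basic sequence it is degree increasing by Corollary \ref{cor deg incr}, so $w_{j_m}$ strictly increases the $j_m$-th coordinate of $\bs k^{\tilde J}$. As every pair in the image of $Y^{\tilde J}$ is fertile and has exactly this degree vector, the generation in the $j_m$-th direction (Section \ref{Elementary generation}) applied to any such pair is degree increasing in the sense of Section \ref{Degree increasing generation}; hence the normalized construction \Ref{tilde y0}--\Ref{normalized 1} applies to every pair in the image of $Y^{\tilde J}$, and $Y^J$ coincides with the assembled map \Ref{Ja}. In that formula the normalized polynomial $y_{j_m,0}(x,\tilde c)$ is monic of degree $\tilde k_{j_m}>k_{j_m}=\deg y_{j_m}(x,\tilde c)$, so $y_{j_m,0}(x,\tilde c)+c_m\,y_{j_m}(x,\tilde c)$ is monic of degree $\tilde k_{j_m}$ for every $c_m\in\C$, while the other polynomial of the pair is unchanged and still monic. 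This proves that $Y^J(c)$ consists of monic polynomials and that its degree vector is $\bs k^{\tilde J}$ transformed by $w_{j_m}$ as in \Ref{s_i l-transf}, i.e.\ equals $w_{j_m}\bs k^{\tilde J}=\bs k^J$, cf.\ \Ref{gen vector}.

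\emph{Step 2: fertility and genericity.} Write $c=(\tilde c,c_m)\in\C^{m-1}\times\C$. For $\tilde c$ in the generic locus of $Y^{\tilde J}$ the pair $Y^{\tilde J}(\tilde c)$ is generic and fertile, hence represents a critical point of a master function by Theorem \ref{fertile cor}(i); then $Y^J(\tilde c,c_m)$ is an immediate descendant of $Y^{\tilde J}(\tilde c)$ and so is fertile for every $c_m$ by Theorem \ref{fertile cor}(ii). The coefficients of the polynomials in $Y^J(c)$ are polynomials in $c$ (built into the construction of $Y^J$ in Section \ref{sec generation procedure}), all pairs $Y^J(c)$ lie in the fixed finite-dimensional space of pairs of monic polynomials of degrees $\bs k^J$, and the set of $c$ whose first $m-1$ coordinates are generic is dense open in $\C^m$; therefore every pair in the image of $Y^J$ is a limit of fertile pairs $Y^J(c)$ with $\tilde c$ generic, hence fertile by Theorem \ref{fertile cor}(iv)(a). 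For genericity, the degree vector being constant on the image, genericity of $Y^J(c)$ amounts to the non-vanishing of the product of the two discriminants and the resultant of its polynomials, a single polynomial in $c$; thus the non-generic locus is Zariski-closed in $\C^m$, and it is proper because, picking $\tilde c$ with $Y^{\tilde J}(\tilde c)$ generic and applying Theorem \ref{fertile cor}(iii) to this generic fertile pair, $Y^J(\tilde c,c_m)$ is generic for all but finitely many $c_m$. Hence almost all pairs in the image of $Y^J$ are generic, and the induction is complete.

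\emph{Main difficulty.} The degree bookkeeping and monicity of Step 1 are routine. The real point is the passage in Step 2 from ``fertile on the dense generic locus'' to ``fertile everywhere in the image'', which rests on the limit statement Theorem \ref{fertile cor}(iv)(a); and it is essential here that the degree-increasing property of basic sequences fixes the degrees of all polynomials occurring in the family, without which one could neither speak of limits inside a fixed finite-dimensional space nor cut out the non-generic locus by one equation.
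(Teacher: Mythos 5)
Your proof is correct and fills in exactly the argument the paper leaves implicit (the lemma is stated with no written proof, as an immediate consequence of the construction): induction on $m$, with monicity and the degree vector coming from Corollary \ref{cor deg incr} and the normalization \Ref{tilde y0}--\Ref{tilde y1}, and fertility/genericity coming from parts (i)--(iv) of Theorem \ref{fertile cor} together with the polynomial dependence of the coefficients on $c$. No discrepancies to report.
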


\begin{lem}
\label{lem uniqeness}
The map  $Y^J$ sends distinct points of $\C^m$ to distinct points of  $(\C[x])^2$.
\end{lem}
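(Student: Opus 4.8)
The plan is to argue by induction on $m$, the length of the basic sequence $J = (j_1,\dots,j_m)$. For $m=0$ there is nothing to prove, and for $m=1$ the maps $c\mapsto (x+c,1)$ and $c\mapsto (1,x+c)$ are manifestly injective. For the inductive step, write $J = (\tilde J, j_m)$ with $\tilde J = (j_1,\dots,j_{m-1})$, and assume $Y^{\tilde J}:\C^{m-1}\to(\C[x])^2$ is injective. Say $j_m = 0$ (the case $j_m=1$ is symmetric). Then by \Ref{Ja},
\bea
Y^J(\tilde c, c_m) = \bigl(y_{0,0}(x,\tilde c) + c_m\,y_0(x,\tilde c),\ y_1(x,\tilde c)\bigr),
\eea
where $(y_0(x,\tilde c), y_1(x,\tilde c)) = Y^{\tilde J}(\tilde c)$ and $y_{0,0}(x,\tilde c)$ is the normalized antiderivative-type solution of $\Wr(y_0, y_{0,0}) = a\,y_1^4$ with vanishing $x^{k_0}$-coefficient.

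First I would recover $\tilde c$ from $Y^J(\tilde c, c_m)$. The second component of $Y^J(\tilde c,c_m)$ is exactly $y_1(x,\tilde c)$; the first component is a monic polynomial of the degree-increased degree $\tilde k_0 = 4k_1+1-k_0 > k_0$. From the pair $Y^J(\tilde c,c_m)$ one can reconstruct $y_0(x,\tilde c)$ up to scalar via the Wronskian equation in the reverse direction — indeed, by Theorem \ref{fertile cor} and the structure of \Ref{wronskian-critical eqn 2}, $y_0(x,\tilde c)$ is determined by the fertility relation $\Wr(y_1, \cdot)$ applied appropriately, or more directly: $y_0(x,\tilde c)$ is (a scalar multiple of) the Wronskian-type partner determined by the pair. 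Since $Y^{\tilde J}$ is injective by the inductive hypothesis and the pair $(y_0(x,\tilde c), y_1(x,\tilde c))$ is determined by $Y^J(\tilde c,c_m)$, we recover $\tilde c$ uniquely. Having $\tilde c$, hence $y_0(x,\tilde c)$ and $y_{0,0}(x,\tilde c)$, the equation $y_{0,0}(x,\tilde c) + c_m y_0(x,\tilde c) = (\text{first component})$ determines $c_m$ uniquely, since $y_0(x,\tilde c)\neq 0$. This gives injectivity of $Y^J$.

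The main obstacle is the step of recovering the \emph{unordered} datum $(y_0(x,\tilde c), y_1(x,\tilde c))$ — and in particular $y_0(x,\tilde c)$ up to scalar — purely from the new pair $Y^J(\tilde c,c_m) = (\tilde y_0, y_1)$. The component $y_1$ is immediate, but extracting $y_0$ requires knowing that, among all monic polynomials $p$ with $\Wr(p, y_1) \propto y_0$-type data, the correct $y_0$ is pinned down; equivalently, one must show the "de-generation" in the $0$-th direction is well-defined on the image. Here I would invoke the generic/fertile structure from Theorem \ref{fertile cor}: since $Y^J(\tilde c,c_m)$ is fertile with degree vector $\bs k^J$ (Lemma \ref{lem gen procedure}), and the degree-decreasing partner in the $0$-direction has degree $k_0 < \tilde k_0$, the equation $\Wr(\tilde y_0, z) = a\, y_1^4$ forces $z$ of degree $k_0$ (the lower solution) uniquely up to the one-parameter ambiguity $z \mapsto z + c\,\tilde y_0$, and the degree constraint $\deg z = k_0 < \tilde k_0 = \deg \tilde y_0$ removes that ambiguity. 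Thus $y_0(x,\tilde c)$ is recovered, closing the induction. I would also remark that Lemma \ref{lem cyclic increas} and Corollary \ref{cor deg incr} guarantee the strict degree increase $\tilde k_0 > k_0$ (resp. $\tilde k_1 > k_1$) at every step of a basic sequence, which is what makes this degree-comparison argument valid throughout.
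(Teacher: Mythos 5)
Your proof is correct and takes the same route as the paper: the paper's entire proof of this lemma is the one-line assertion that it ``is easily proved by induction on $m$,'' and your argument is exactly such an induction, with the key step filled in correctly --- recovering $y_0(x,\tilde c)$ from the descendant pair as the unique monic solution of degree $k_0<\tilde k_0$ of the Wronskian equation (the strict degree increase guaranteed by Corollary \ref{cor deg incr} is indeed what kills the $z\mapsto z+c\,\tilde y_0$ ambiguity), after which $\tilde c$ is determined by the inductive hypothesis and $c_m$ is read off from the coefficient of $x^{k_0}$.
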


\begin{proof}
The lemma is easily proved by induction on $m$.
\end{proof}

\begin{example}
\label{example AM}
If $J=(0,1)$, then
\bea
Y^{(0)}(c_1)= (x+c_1,1), \qquad
Y^{(0,1)}(c_1,c_2)=
(x+c_1, (x+c_1)^2+ c_2-c_1^2).
\eea
If $J=(1,0)$, then
\bea
Y^{(1)}(c_1)= (1,x+c_1), \qquad
Y^{(1,0)}(c_1,c_2)=
((x+c_1)^4+ c_2-c_1^4, x+c_1).
\eea
\end{example}

The set of all pairs $(y_0,y_1)\in (\C[x])^2$ obtain from $y^\emptyset=(1,1)$
by generations in all degree increasing directions is called the {\it population of pairs}
generated from $y^\emptyset$, c.f. \cite{MV1}.

\section{Critical points of master functions and Miura opers}
\label{sec cr and Miu}

\subsection{Miura oper associated with a pair of polynomials, \cite{MV2}}
Define a map
\bea
\mu : (\C[x])^2 \to \mc M(A^{(2)}_2),
\eea
which sends a pair $y=(y_0,y_1)$ to the Miura oper $\L=\der + \La + V$ with
\bea
V=\ln'\big( \frac{y_1^2}{y_0}\big)\,h_0\,,
\eea
where for a function $f(x)$ we denote $\ln'(f(x))=f'(x)/f(x)$.
We say that the Miura oper $\mu(y)$ is {\it associated to the pair of polynomials} $y$.
For example,
\bea
\L^\emptyset = \der +\La
\eea
is associated to the pair $y^\emptyset=(1,1)$.

We have
\bean
\label{def eq}
\langle \alpha_0, V\rangle = \ln' \big( \frac{y_1^4}{y_0^2}\big) ,
\qquad
\langle \alpha_1, V\rangle = \ln' \big( \frac{y_0}{y_1^2} \big).
\eean
 Equations \Ref{def eq} can be written as
\bean
\label{Def eq}
\langle \alpha_j, V\rangle = \ln' \big( \prod_{i=0}^1 y_i^{-a_{i,j}} \big) ,
\eean
see \cite{MV2}.

\subsection{Deformations of Miura opers of type $\AT$, \cite{MV2}}

\begin{lem}[\cite{MV2}]

Let $\L$ $= \der + \Lambda + V$ be a Miura oper of type $\AT$.  Let $g \in \Bb$. Let $f_j, j \in \{0,1\}$, be one of canonical
generators of $\g(\AT)$, see Section \ref{sec def}.  Then
\bean \label{adeq}
e^{\on{ad} \,gf_j}(\L) = \der + \Lambda + V - g h_j - (g^\prime - \langle \alpha_j , V \rangle g + g^2)f_j.
\eean
\end{lem}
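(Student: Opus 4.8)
The plan is to compute $e^{\on{ad}\,gf_j}(\L)$ directly from the definition of the exponential, using the commutation relations of $\g(\AT)$ listed in Section \ref{sec def}. First I would recall that $f_j$ acts on $\g(\AT)$ with $\on{ad} f_j$ and that, since $\L = \der + \Lambda + V$ with $\Lambda = e_0 + e_1$ and $V \in \Bb(\g^0)$, the terms to which $\on{ad}\, gf_j$ is applied are: the differential operator $\der$, the element $\Lambda$, and the function $V$. The key observation is that $\der$ contributes a derivative: $[gf_j, \der] = -g' f_j$ (acting on $\Bb(\g)$, since $\der \circ (gf_j) - (gf_j)\circ \der = g' f_j$ as multiplication operators, so $[\,gf_j,\der\,] = -g'f_j$). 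This is the source of the $-g'f_j$ term.

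Next I would compute $[gf_j, \Lambda] = g[f_j, e_0 + e_1]$. Using $[f_j, e_i] = -\delta_{i,j} h_j$ (from $[e_i,f_j] = \delta_{i,j}h_i$), we get $[gf_j,\Lambda] = -g h_j$, which gives the $-gh_j$ term. Then I would compute $[gf_j, V] = g\,[f_j, V]$. Since $V \in \g^0 = \h$, write $[f_j, V] = -[V, f_j]$; because $[h, f_j] = -\langle \alpha_j, h\rangle f_j$ for $h \in \h$ (this is the relation $[h_i, f_j] = -a_{i,j}f_j$ extended linearly, with $\langle \alpha_j, h_i\rangle = a_{i,j}$), we get $[f_j, V] = \langle \alpha_j, V\rangle f_j$, so $[gf_j, V] = g\langle \alpha_j, V\rangle f_j$. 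Collecting the first-order terms: $e^{\on{ad}\,gf_j}(\L) = \L + [gf_j, \L] + \tfrac12[gf_j,[gf_j,\L]] + \cdots$, and $[gf_j,\L] = -g'f_j - gh_j + g\langle\alpha_j,V\rangle f_j$.

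The second-order term is $\tfrac12 [gf_j, [gf_j,\L]]$. Here $[gf_j, f_j] = 0$, so only the $-gh_j$ piece contributes: $[gf_j, -gh_j] = -g^2[f_j,h_j] = -g^2 \cdot a_{j,j} f_j \cdot(-1)$... more carefully, $[h_j, f_j] = -a_{j,j}f_j = -2f_j$, so $[f_j, h_j] = 2f_j$, hence $[gf_j, -gh_j] = -g^2 \cdot 2 f_j = -2g^2 f_j$, and $\tfrac12$ of this is $-g^2 f_j$. All higher commutators vanish because $[gf_j, f_j] = 0$ kills everything once we have reached a multiple of $f_j$. Summing: $e^{\on{ad}\,gf_j}(\L) = \der + \Lambda + V - gh_j - (g' - \langle\alpha_j,V\rangle g + g^2)f_j$, as claimed.

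The main obstacle, such as it is, will be bookkeeping of signs and the precise bracket conventions — in particular handling the $[gf_j,\der]$ term correctly (treating $\der$ as an operator on $\Bb(\g)$ rather than an element of $\g$) and making sure the factor from the second-order term of the exponential is exactly $-g^2$. I do not expect any conceptual difficulty: everything reduces to the rank-one $\frak{sl}_2$-triple $\{e_j, h_j, f_j\}$ together with the fact that $V$ lies in the Cartan and that $\on{ad} f_j$ applied twice to $\L$ already lands in $\C f_j$, which is annihilated by $\on{ad} f_j$. One should also note that the formula is stated for both $j = 0$ and $j = 1$ uniformly, and that $a_{0,0} = a_{1,1} = 2$ is what makes the $g^2$ coefficient come out with no extra constant; I would remark on this to confirm consistency with Lemma \ref{lem exp}.
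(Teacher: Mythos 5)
Your computation is correct: the paper states this lemma without proof (citing [MV2]), and your direct expansion of $e^{\on{ad}\,gf_j}$ --- with $[gf_j,\der]=-g'f_j$, $[gf_j,\Lambda]=-gh_j$, $[gf_j,V]=\langle\alpha_j,V\rangle gf_j$, the second-order term contributing exactly $-g^2f_j$ because $a_{j,j}=2$, and all higher brackets vanishing --- is precisely the standard verification carried out in that reference. Nothing is missing.
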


\begin{cor}[\cite{MV2}]
Let $\L = \der + \Lambda + V$ be a Miura oper. Then $e^{\on{ad}\, g f_j } (\L)$
is a Miura oper if and only if the scalar function
$g$ satisfies the Ricatti equation
\bean
\label{Ric}
g' - \langle \alpha_j , V \rangle g +  g^2 = 0 \ .
\eean
\end{cor}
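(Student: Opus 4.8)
The plan is to read the corollary off directly from the Lemma that precedes it, using only the $\Z$-grading $\g = \oplus_{i\in\Z}\g^i$ of $\g(\AT)$. First I would apply formula \Ref{adeq} to the given Miura oper $\L = \der + \La + V$, which gives
\[
e^{\on{ad}\, g f_j}(\L) \ = \ \der + \La + \big(V - g h_j\big) - \big(g' - \langle \alpha_j, V\rangle g + g^2\big) f_j .
\]
This is an element of $\tilde\g$, so it makes sense to ask whether it is a Miura oper.

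Next I would recall the definition: a differential operator is a Miura oper of type $\AT$ precisely when it has the form $\der + \La + W$ with $W \in \B(\g^0)$, where $\g^0 = \h$ is the one-dimensional space generated by $h_0$ (equivalently $h_1 = -2h_0$). In the displayed expression the summand $\der + \La$ is already of the required shape; the summand $V - g h_j$ lies in $\B(\g^0)$ automatically, since $V \in \B(\g^0)$ by hypothesis and $h_j \in \h = \g^0$ and $g$ is a scalar function; and the remaining summand is a scalar function times $f_j$, which lies in $\g^{-1}$.

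Then I would invoke that the standard grading is an honest direct-sum decomposition, so $\g^0 \cap \g^{-1} = \{0\}$. Consequently the degree-$(-1)$ component of $e^{\on{ad}\, g f_j}(\L)$ equals zero --- equivalently, $e^{\on{ad}\, g f_j}(\L)$ is a Miura oper --- if and only if the scalar coefficient of $f_j$ vanishes identically, i.e. $g' - \langle \alpha_j, V\rangle g + g^2 = 0$, which is exactly the Riccati equation \Ref{Ric}. This completes the argument.

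I do not anticipate any genuine obstacle: all of the analytic content is already contained in the Lemma \Ref{adeq}, and the only point that must be stated with a little care is that the $f_j$-term genuinely cannot be absorbed into the $\g^0$-part of the operator --- which is immediate from the fact that $f_j$ has strictly negative degree. As an optional remark one can note that the identical computation and conclusion hold when $\g(\AT)$ is regarded inside $\g(\At)$ via the embedding $\varrho$ of Section \ref{as a sub}, since $\varrho$ preserves the standard grading and carries $f_j$ into $\gA^{-1}$.
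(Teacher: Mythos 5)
Your argument is correct and is exactly the route the paper intends: the corollary is an immediate consequence of formula \Ref{adeq}, since the only obstruction to $e^{\on{ad}\,g f_j}(\L)$ being of the form $\der+\La+W$ with $W\in\B(\g^0)$ is the degree-$(-1)$ term $-(g'-\langle\al_j,V\rangle g+g^2)f_j$, which vanishes precisely when $g$ solves \Ref{Ric}. The paper gives no separate proof for this corollary, treating it as immediate from the Lemma in just the way you spell out.
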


Let $\L = \der + \Lambda + V$ be a Miura oper with $V=vh_0$. Assume that the functions $v$ is a rational functions of $x$.
For $j\in\{0,1\}$, we say that $\L$ is  {\it deformable in the $j$-th direction}
if equation \Ref{Ric} has a nonzero solution $g$, which is a rational function.

\begin{thm} [\cite{MV2}]
\label{ricc thm}
Let the  Miura oper
$\L = \partial  +  \La  +  V$ be associated with a pair of polynomials $y=(y_0, y_1)$.
Let $j\in \{0,1\}$. Then $\L$ is deformable in the $j$-th direction
if and only if there exists a polynomial $\tilde y_j$ satisfying equation
\Ref{Wr-Cr}. Moreover, in that case any nonzero rational
solution $g$ of the Ricatti equation \Ref{Ric} has the form
$g = \mathrm{ln}' (\tilde y_j/ y_j)$ where $\tilde y_j$ is a solution of equation
\Ref{Wr-Cr}. If $g = \mathrm{ln}' (\tilde y_i/ y_i)$, then
the Miura oper
\bean
\label{transformation}
e^{\on{ad}\, g f_i}(\L) = \partial  + \La + V - g  h_j
\eean
is associated  the pair $y^{(j)}$, which is obtained from the pair $y$ by replacing $y_j$ with $\tilde y_j$.

\end{thm}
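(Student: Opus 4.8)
The plan is to analyze the Ricatti equation \Ref{Ric} directly using the explicit form $V = v h_0$ with $v = \ln'(y_1^2/y_0)$, and exploit the relation between scalar Ricatti equations and second-order (or, more precisely, Wronskian-type) linear equations. First I would substitute the pairings computed in \Ref{def eq}: for $j=0$ the equation becomes $g' - \ln'(y_1^4/y_0^2)\,g + g^2 = 0$, and for $j=1$ it becomes $g' - \ln'(y_0/y_1^2)\,g + g^2 = 0$. The standard trick is the substitution $g = \ln'(h/y_j)$ for an unknown function $h$; a direct computation (using $\langle\alpha_j,V\rangle = \ln'(\prod_i y_i^{-a_{i,j}})$ and $y_j' / y_j$) converts \Ref{Ric} into the statement that $\Wr(y_j, h)$ is proportional to $\prod_{i\ne j} y_i^{-a_{i,j}}$, i.e. precisely equation \Ref{Wr-Cr} with $\tilde y_j$ a constant multiple of $h$. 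This is the computational heart of the argument, but it is routine once the substitution is set up: one expands $g' + g^2 = (h/y_j)''/(h/y_j)$-type identities and checks that the $y_j$-terms cancel against $\langle\alpha_j,V\rangle g$, leaving $\Wr(y_j,h)' / \Wr(y_j,h) = \ln'(\prod_{i\ne j} y_i^{-a_{i,j}})$, which integrates to the Wronskian identity.

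Next I would address the rationality and polynomiality bookkeeping. Starting from an arbitrary nonzero rational solution $g$ of \Ref{Ric}, one sets $h = y_j \exp(\int g)$; a priori $h$ need not be rational, so the argument must show that the Wronskian identity forces $h$ (up to scalar) to be a polynomial. Here I would argue as in \cite{MV1, MV2}: since $\Wr(y_j, h) = c\prod_{i\ne j} y_i^{-a_{i,j}}$ with the right side a polynomial (as $-a_{i,j}\ge 0$ for $i\ne j$ in the Cartan matrix $\AT$), and $y_j$ is a polynomial, $h$ satisfies a first-order linear ODE $h' - (y_j'/y_j) h = c\prod_{i\ne j} y_i^{-a_{i,j}}/y_j$ whose solution, given that $g = h'/h - y_j'/y_j$ is rational, must itself be rational; then a pole/degree analysis at the roots of $y_j$ and at infinity shows $h$ is a polynomial $\tilde y_j$. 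Conversely, if such a polynomial $\tilde y_j$ exists, then $g = \ln'(\tilde y_j/y_j)$ is manifestly rational and one checks by reversing the computation that it solves \Ref{Ric}, so $\L$ is deformable in the $j$-th direction.

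Finally I would identify the resulting Miura oper. Applying the deformation lemma \Ref{adeq} with this $g$ gives $e^{\on{ad}\, g f_j}(\L) = \der + \La + V - g h_j$, the $f_j$-coefficient vanishing exactly because $g$ solves \Ref{Ric}. It remains to recognize $V - g h_j$ as the potential associated to the pair $y^{(j)}$ obtained by replacing $y_j$ with $\tilde y_j$. For $j=0$ this means checking $v - g = \ln'(y_1^2/\tilde y_0)$, i.e. $g = \ln'(\tilde y_0/y_0)$, which is what we have; for $j=1$ one uses $h_1 = -2h_0$ together with $g = \ln'(\tilde y_1/y_1)$ to get the new potential $\ln'(\tilde y_1^2/y_0)\,h_0$, matching $\mu(y^{(1)})$. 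The only genuine obstacle is the rationality-to-polynomiality step in the middle paragraph — controlling $h$ at the zeros of $y_j$ and at $\infty$ — and this is handled exactly as in the cited works \cite{MV1, MV2}; everything else is the substitution computation and straightforward unwinding of definitions.
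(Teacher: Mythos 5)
Your proposal follows essentially the same route as the paper's proof: the paper likewise reduces the Ricatti equation to the form $g'/g + g = \ln'\big(\prod_i y_i^{-a_{i,j}}\big)$, observes that a nonzero rational solution must be a logarithmic derivative $g=c'/c$ of a rational function, integrates to obtain $c$ and hence $\tilde y_j = c\,y_j$ (deferring the rational-to-polynomial bookkeeping to \cite{MV2}, just as you do), and finishes with the same pairing computation $\langle \al_k, V - g h_j\rangle = \ln'\big(\tilde y_j^{-a_{j,k}}\prod_{i\ne j} y_i^{-a_{i,k}}\big)$ identifying the resulting Miura oper. No substantive difference.
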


\begin{proof} Write \Ref{Ric} as
\bean\label{Ric1}
g'/g  +  g  =  \on{ln}'\big( \prod_{j=0}^1 y_i^{- a_{i,j}}
\big) \ .
\eean
If $g$ is a rational function, then $g \to 0$ as $x \to \infty$ and all poles
of $g$ are simple. Moreover, the residue of $g$ at any point is an integer. Hence
$g = c'/c$ for a suitable rational function $c(x)$. Then
\bean\label{Ric2}
c\ = \ \int  \prod_{j=0}^1 y_j(x)^{-a_{i,j}} dx \
\eean
and equation \Ref{Wr-Cr} has a polynomial solution
$\tilde y_j = - c y_j$. Conversely if  equation
\Ref{Wr-Cr} has a polynomial solution $\tilde y_i$, then
the function $c$ in \Ref{Ric2} is rational. Then $g = c'/c$ is a rational solution of
equation \Ref{Ric}.

Let $g = \on{ln}' c =\on{ln}' (\tilde y_i/ y_i)$,
where $\tilde y_i$ is a solution of \Ref{Wr-Cr}. Then
\bea
e^{\on{ad}\, g f_j }(\L) = \partial  + \La  +
V -\on{ln}'(\tilde y_j / y_j) h_j
\eea
and
\bea
\langle \al_k, V \rangle  - \langle   \al_k, h_j \rangle \on{ln}'(\tilde y_j / y_j)\,
 =
 \on{ln}'\Big( \prod_{i=0}^1 y_i^{-a_{i,k}}
\Big)  -  a_{j,k} \on{ln}'(\tilde y_j / y_j)\,
 =    \on{ln}'\Big(\tilde y_j^{-a_{j,k}}
\prod_{i=0, \, i\neq j }^1 y_i^{-a_{i,k}}\Big) .
\eea
\end{proof}

Note that if equation \Ref{Ric} has one nonzero rational solution
$g = c'/c$ with rational $c(x)$, then other nonzero (rational) solutions have the form
$g = c'/(c + \on{const})$.

\subsection{Miura opers associated with the generation procedure}

\label{Miura opers with cr points}

Let $J = (j_1,\dots,j_m)$ be a basic sequence, see Section \ref{sec degree transf}.
Let $Y^J : \C^m \to \PCN $
be the generation of pairs from $y^\emptyset$ in the $J$-th direction.
We define the associated family of Miura opers by the formula:
\bea
\mu^J \  :\ \C^m\ \to\ \mc M(\AT), \qquad c \ \mapsto \ \mu(Y^J(c)) .
\eea
The map $\mu^J$ is called the {\it generation of Miura opers from $\Ll^\emptyset$
in the $J$-th direction.}

For $\ell=1,\dots,m$, denote $J_\ell=(j_1,...,j_\ell)$ the beginning $\ell$-interval of
the sequence $J$. Consider the associated map $Y^{J_\ell} : \C^\ell\to\PCN$. Denote
\bea
Y^{J_\ell}(c_1,\dots,c_\ell) = (y_0(x,c_1,\dots,c_\ell, \ell), y_1(x,c_1,\dots,c_\ell, \ell)).
\eea
Introduce
\bean
\label{g's}
g_1(x,c_1,\dots,c_m) &=&
\ln'(y_{j_1}(x,c_1,1)) ,
\\
\notag
g_\ell(x,c_1,\dots,c_m) &=&
\ln'(y_{j_\ell}(x,c_1.\dots,c_\ell,\ell)) - \ln'(y_{j_\ell}(x,c_1,\dots,c_{\ell-1},\ell-1)),
\eean
for $\ell=2,\dots,m$.
For $c\in\C^m$, define  $U^J(c)=\sum_{i<0} (U^J(c))_i$, $ (U^J(c))_i\in\B(\g^i)$, depending on $c\in\C^m$,
by the formula
\bean
\label{T}
e^{-\,\ad U^J(c)} = e^{\ad g_m(x,c) f_{j_m}} \cdots e^{\ad g_1(x,c) f_{j_1}}.
\eean

\begin{lem}
\label{lem formula} For $c\in\C^m$, we have
\bean
\label{operformula}
\mu^J(c)\ = \ e^{-\,\ad U^J(c)}(\Ll^\8)
\eean
 and
\bean
\label{oper2}
\mu^J(c)\ =\
  \der + \Lambda -\sum_{\ell=1}^m g_\ell(x,c) h_{j_\ell}.
  \eean
\end{lem}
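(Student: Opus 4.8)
The plan is to prove the two formulas \Ref{operformula} and \Ref{oper2} simultaneously by induction on $m$, the length of the basic sequence $J$. The base case $m=0$ is immediate: $Y^\emptyset$ produces $y^\emptyset=(1,1)$, so $\mu^\emptyset(\text{pt})=\mu(y^\emptyset)=\Ll^\8=\der+\La$, and the empty product in \Ref{T} gives $U^{J}=0$, so both \Ref{operformula} and \Ref{oper2} hold trivially. For the inductive step I would write $\tilde J=(j_1,\dots,j_{m-1})$ and assume the statement for $\tilde J$: that $\mu^{\tilde J}(\tilde c)=e^{-\ad U^{\tilde J}(\tilde c)}(\Ll^\8)=\der+\La-\sum_{\ell=1}^{m-1}g_\ell h_{j_\ell}$, where the $g_\ell$ here are computed from $Y^{\tilde J}$. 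The first point to nail down carefully is that the functions $g_1,\dots,g_{m-1}$ appearing in the formula for $\mu^J$ (built from $Y^{J_\ell}$, $\ell\le m-1$) coincide with those appearing in the formula for $\mu^{\tilde J}$; this holds because $J_\ell$ for $\ell\le m-1$ is the same whether we regard it as a prefix of $J$ or of $\tilde J$, so the intermediate pairs $Y^{J_\ell}$ are literally the same polynomials.

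Next I would analyze the single generation step in the $j_m$-th direction. By construction \Ref{Ja}, $Y^J(\tilde c,c_m)$ is obtained from $Y^{\tilde J}(\tilde c)$ by replacing $y_{j_m}$ with $\tilde y_{j_m}=y_{j_m,0}+c_m\,y_{j_m}$, which is exactly the "replace $y_j$ by $\tilde y_j$" operation of Theorem \ref{ricc thm}. Therefore, setting $g_m=\ln'(\tilde y_{j_m}/y_{j_m})=\ln'(y_{j_m}(x,\dots,m))-\ln'(y_{j_m}(x,\dots,m-1))$ — which matches the definition \Ref{g's} — Theorem \ref{ricc thm} gives
\[
\mu^J(\tilde c,c_m)=\mu\big(Y^J(\tilde c,c_m)\big)=e^{\ad\,g_m f_{j_m}}\big(\mu^{\tilde J}(\tilde c)\big).
\]
Here I should check the mild compatibility that $\mu^{\tilde J}(\tilde c)$ is indeed of the form $\der+\La+V$ with $V=vh_0$ and $v$ rational (so that Theorem \ref{ricc thm} applies): this follows from the inductive formula $\mu^{\tilde J}(\tilde c)=\der+\La-\sum_{\ell<m}g_\ell h_{j_\ell}$ together with the relation $2h_0+h_1=0$, which lets us collect $-\sum g_\ell h_{j_\ell}$ into a single multiple of $h_0$, and from the fact that all $g_\ell$ are $\ln'$ of monic polynomials (Lemma \ref{lem gen procedure}, Lemma \ref{lem formula} applied inductively) hence rational.

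Combining: applying $e^{\ad\,g_m f_{j_m}}$ to the inductive identity $\mu^{\tilde J}(\tilde c)=e^{-\ad U^{\tilde J}}(\Ll^\8)$ gives $\mu^J(\tilde c,c_m)=e^{\ad\,g_m f_{j_m}}e^{-\ad U^{\tilde J}}(\Ll^\8)$, and comparing with the definition \Ref{T} of $U^J$ — namely $e^{-\ad U^J}=e^{\ad\,g_m f_{j_m}}e^{\ad\,g_{m-1}f_{j_{m-1}}}\cdots e^{\ad\,g_1 f_{j_1}}=e^{\ad\,g_m f_{j_m}}e^{-\ad U^{\tilde J}}$ — yields \Ref{operformula}. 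For \Ref{oper2}, I apply the explicit deformation formula from Theorem \ref{ricc thm}: $e^{\ad\,g_m f_{j_m}}(\der+\La+V)=\der+\La+V-g_m h_{j_m}$ (this is the content of \Ref{transformation}, which is just equation \Ref{adeq} together with the Ricatti equation \Ref{Ric} killing the $f_{j_m}$-term). Feeding in $V=-\sum_{\ell<m}g_\ell h_{j_\ell}$ from the inductive hypothesis gives $\mu^J(\tilde c,c_m)=\der+\La-\sum_{\ell=1}^m g_\ell h_{j_\ell}$, which is \Ref{oper2}.

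The main obstacle I anticipate is purely bookkeeping rather than conceptual: making sure the indexing of the functions $g_\ell$ is consistent between the "big" sequence $J$ and its prefix $\tilde J$, and that the normalization conventions in \Ref{tilde y0}–\Ref{tilde y1} (monic polynomials, vanishing subleading coefficient) are exactly what is needed for $g_m$ to be $\ln'(\tilde y_{j_m}/y_{j_m})$ with $\tilde y_{j_m}$ a genuine solution of \Ref{Wr-Cr} as required by Theorem \ref{ricc thm}. One should also observe that $g_m$ is independent of the later coordinates (there are none) and that $e^{\ad\,g_m f_{j_m}}$ really is applied to a Miura oper, using Lemma \ref{lem gen procedure} to guarantee genericity of $Y^{\tilde J}(\tilde c)$ for generic $\tilde c$ and then extending the resulting polynomial identity in the $c_i$ to all of $\C^m$ by continuity (both sides of \Ref{oper2} are manifestly rational, indeed polynomial-coefficient, in $c$).
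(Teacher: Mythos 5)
Your proposal is correct and follows essentially the same route as the paper, whose entire proof is the one-line remark that the lemma follows from Theorem \ref{ricc thm}; your induction on $m$, applying \Ref{transformation} at each generation step and matching the factors in \Ref{T}, is just the natural unpacking of that citation. The bookkeeping points you flag (consistency of the $g_\ell$ across prefixes, the normalization constants in \Ref{tilde y0}--\Ref{tilde y1}) are real but harmless, since polynomials are taken up to nonzero scalar throughout.
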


\begin{proof}
The lemma follows from Theorem \ref{ricc thm}.
\end{proof}

\begin{cor}
\label{cor der}
Let $r>0$ and $r=1,5$ mod $6$. Let $c\in\C^m$. Let $\frac {\der\phantom{a}}{\der t_r}\big|_{\mu^J(c)}$ be the value at $\mu^J(c)$
of the vector field of the
$r$-th mKdV flow on the space $\mc M(A^{(2)}_2)$, see \Ref{mKdVr}. Then
\bean
\label{T mkdv}
\frac {\der}{\der t_r}\big|_{\mu^J(c)} = - \frac {\der}{\der x}(e^{-\,\ad U^J(c)}(\La_r))^0.
\eean

\end{cor}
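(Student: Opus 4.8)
The plan is to combine the explicit conjugation formula for $\mu^J(c)$ from Lemma \ref{lem formula} with the general description of the mKdV flow given in Lemma \ref{lem der}. Recall that by \Ref{operformula} we have $\mu^J(c) = e^{-\ad U^J(c)}(\Ll^\8)$, where $\Ll^\8 = \der + \La$ and $U^J(c) = \sum_{i<0}(U^J(c))_i$ with $(U^J(c))_i \in \B(\g^i)$. Thus the element $U := U^J(c)$ is precisely of the type appearing in Proposition \ref{Prop U}: applying $e^{\ad U}$ to $\mu^J(c)$ returns $\der + \La$, which already has the ``reduced'' form $\der + \La + H$ with $H = 0 \in \bigoplus_{j<0}\B(\z^j)$. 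Hence $U$ is a valid choice of the conjugating element in Proposition \ref{Prop U} for the Miura oper $\L = \mu^J(c)$.

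Now I would invoke the definition of $\phi(\La_r)$ attached to the Miura oper $\mu^J(c)$, namely $\phi(\La_r) = e^{-\ad U}(\La_r)$, which by Proposition \ref{Prop U} is independent of the particular choice of $U$; using the specific $U = U^J(c)$ above, we get
\be
\phi(\La_r) = e^{-\ad U^J(c)}(\La_r).
\ee
Lemma \ref{lem der} states that along the $r$-th mKdV flow, $\dfrac{\der \L}{\der t_r} = -\dfrac{d}{dx}\,\phi(\La_r)^0$, where $\phi(\La_r)^0$ denotes the degree-zero component in the standard grading. Substituting the expression for $\phi(\La_r)$ yields exactly \Ref{T mkdv}:
\be
\frac{\der}{\der t_r}\Big|_{\mu^J(c)} = -\frac{\der}{\der x}\big(e^{-\ad U^J(c)}(\La_r)\big)^0.
\ee

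The only point requiring a little care — and the one I would spell out — is the verification that $U^J(c)$ genuinely satisfies the hypotheses of Proposition \ref{Prop U}: that it is a sum of strictly negative-degree terms (which follows from \Ref{T}, since each $g_\ell f_{j_\ell}$ has degree $-1$, so the Baker–Campbell–Hausdorff expansion of the composition $e^{\ad g_m f_{j_m}}\cdots e^{\ad g_1 f_{j_1}}$ lands in $\bigoplus_{i<0}\B(\g^i)$), and that $e^{\ad U^J(c)}(\mu^J(c)) = \der + \La$ has the required normal form, which is immediate from \Ref{operformula}. Given that, the corollary is just a direct substitution into Lemma \ref{lem der}, so there is no real obstacle; the substance of the result is entirely carried by the earlier lemmas, and this step merely records the consequence of writing the abstract conjugator $U$ in the concrete form $U^J(c)$.
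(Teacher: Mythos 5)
Your proposal is correct and follows essentially the same route as the paper, whose proof is a one-line citation of the mKdV equation (via Lemma \ref{lem der}) together with formula \Ref{operformula}; you simply make explicit the verification that $U^J(c)$ is an admissible conjugator in the sense of Proposition \ref{Prop U}, which the paper leaves implicit.
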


\begin{proof}
The corollary follows from \Ref{mKdVr} and \Ref{operformula}.
\end{proof}

We have a natural  embedding $\mc M(\AT) \hookrightarrow \mc M(\At)$. Let $\frak m_i : \mc M(\At)\to \D,\ \Ll \mapsto L_i,$ be the Miura maps
defined in Section \ref{sec Miura maps} for  $i=0,1$. Below we consider the composition of the embedding and a Miura map.

Denote ${\tilde J}=(j_1,\dots,j_{m-1})$. Consider the associated family
$\mu^{{\tilde J}} : \C^{m-1}\to\mc M(\AT)$. Denote ${\tilde c}=(c_1,\dots,c_{m-1})$.

\begin{lem}
\label{lem j j'}
For all $({\tilde c},c_m)\in\C^m$, we have  $\frak m_1\circ \mu^J({\tilde c},c_m) = \frak m_1\circ \mu^{{\tilde J}}({\tilde c})$ if $j_m=0$ and we have
 $\frak m_0\circ \mu^J({\tilde c},c_m) = \frak m_0\circ \mu^{{\tilde J}}({\tilde c})$ if $j_m=1$.

\end{lem}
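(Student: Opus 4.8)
The plan is to reduce the statement to the gauge-equivalence criterion for Miura maps, Theorem \ref{thm gaugemiura}, by exhibiting an explicit $i$-gauge transformation connecting $\mu^J(\tilde c, c_m)$ and $\mu^{\tilde J}(\tilde c)$. Consider first the case $j_m = 0$. By Lemma \ref{lem formula}, formula \Ref{operformula}, we have $\mu^J(\tilde c, c_m) = e^{-\ad U^J(\tilde c,c_m)}(\Ll^\8)$ and $\mu^{\tilde J}(\tilde c) = e^{-\ad U^{\tilde J}(\tilde c)}(\Ll^\8)$, so from the factored form \Ref{T} we get
\beq
\mu^J(\tilde c,c_m) \ =\ e^{\ad g_m(x,c) f_{0}}\bigl(\mu^{\tilde J}(\tilde c)\bigr),
\notag
\eeq
where $g_m$ is the rational function from \Ref{g's} built out of the $0$-th polynomial. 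The point is that $f_0$, viewed inside $\g(\At)$ via the embedding $\varrho$ of Section \ref{as a sub}, equals $F_0 \in \n^-_1$ (recall $\n^-_1$ is generated by $F_0, F_2$). Hence $g_m f_0 \in \Bb(\n^-_1)$, and the equality above exhibits $\mu^J(\tilde c,c_m)$ and $\mu^{\tilde J}(\tilde c)$ as $1$-gauge equivalent $1$-opers. Theorem \ref{thm gaugemiura} then gives $\frak m_1 \circ \mu^J(\tilde c,c_m) = \frak m_1 \circ \mu^{\tilde J}(\tilde c)$, as claimed.

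For the case $j_m = 1$ the argument is identical with the roles of the directions swapped: now $\mu^J(\tilde c,c_m) = e^{\ad g_m(x,c) f_1}(\mu^{\tilde J}(\tilde c))$, and under $\varrho$ we have $f_1 = 2F_1 + 2F_2 \in \n^-_0$ (the subalgebra generated by $F_1, F_2$). Therefore $g_m f_1 \in \Bb(\n^-_0)$, the two Miura opers are $0$-gauge equivalent as $0$-opers, and Theorem \ref{thm gaugemiura} yields $\frak m_0 \circ \mu^J(\tilde c,c_m) = \frak m_0 \circ \mu^{\tilde J}(\tilde c)$.

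The only genuine content to check — and the step I expect to be the main obstacle — is the membership $f_j \in \n^-_{1-j}$ under the embedding $\varrho$, i.e.\ that the single gauge generator appearing in the last generation step lies in the right nilpotent subalgebra, and that multiplication by the scalar rational function $g_m$ does not spoil the $i$-oper form (it does not, since $\Bb(\n^-_i)$ is a $\Bb$-module and $e^{\ad w}$ preserves $i$-opers for $w \in \Bb(\n^-_i)$ by the discussion preceding Theorem \ref{thm gaugemiura}). One should also note that $\mu^{\tilde J}(\tilde c)$ is itself a Miura oper, hence an $i$-oper for every $i$, so the hypotheses of Theorem \ref{thm gaugemiura} are met. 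Everything else is bookkeeping with the formulas \Ref{T}, \Ref{g's} already in place.
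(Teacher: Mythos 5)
Your proof is correct and follows essentially the same route as the paper, which simply cites formula \Ref{operformula} together with Theorem \ref{thm gaugemiura}. You have correctly supplied the details left implicit there: the factorization \Ref{T} isolates the last gauge factor $e^{\ad g_m f_{j_m}}$, and under the embedding $\varrho$ one has $f_0=F_0\in\n^-_1$ and $f_1=2F_1+2F_2\in\n^-_0$, so the two Miura opers are $(1-j_m)$-gauge equivalent and Theorem \ref{thm gaugemiura} applies.
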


\begin{proof}
The lemma follows from formula \Ref{operformula} and Theorem \ref{thm gaugemiura}.
\end{proof}

\begin{lem}
\label{lem der c-m}
If $j_m=0$, then
\bean
\label{form der c-m 0}
\frac{\der \mu^J}{\der c_m}({\tilde c},c_m) \ =\ -a\
\frac{y_{1}(x,{\tilde c},m-1)^4}
{y_{0}(x,{\tilde c},c_m,m)^2} \,h_{0}
\eean
for some  $a\in\C^\times$. If $j_m=1$, then
\bean
\label{form der c-m 1}
\frac{\der \mu^J}{\der c_m}({\tilde c},c_m) \ =\ -a\
\frac{y_{0}(x,{\tilde c},m-1)}
{y_{1}(x,{\tilde c},c_m,m)^2} \,h_{1}
\eean
for some $a\in\C^\times$.
\end{lem}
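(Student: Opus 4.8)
The plan is to differentiate the explicit formula \Ref{oper2} for $\mu^J$ with respect to the last parameter $c_m$. By Lemma \ref{lem formula}, we have
\be
\mu^J({\tilde c},c_m) = \der + \La - \sum_{\ell=1}^m g_\ell(x,{\tilde c},c_m)\,h_{j_\ell},
\ee
so $\frac{\der\mu^J}{\der c_m} = -\sum_{\ell=1}^m \frac{\der g_\ell}{\der c_m}\,h_{j_\ell}$. The key observation is that for $\ell \le m-1$ the functions $g_\ell$ depend only on the beginning interval $J_{m-1}$ and on $c_1,\dots,c_{m-1}$ — this is clear from the definition \Ref{g's}, since $g_\ell$ is built from $Y^{J_\ell}$ and $Y^{J_{\ell-1}}$ with $\ell-1,\ell \le m-1$, and those maps do not involve $c_m$. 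Hence $\frac{\der g_\ell}{\der c_m}=0$ for $\ell\le m-1$, and only the $\ell=m$ term survives:
\be
\frac{\der\mu^J}{\der c_m}({\tilde c},c_m) = -\,\frac{\der g_m}{\der c_m}(x,{\tilde c},c_m)\,h_{j_m}.
\ee

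The remaining task is to compute $\frac{\der g_m}{\der c_m}$. By \Ref{g's}, $g_m = \ln'(y_{j_m}(x,{\tilde c},c_m,m)) - \ln'(y_{j_m}(x,{\tilde c},m-1))$, and again only the first summand depends on $c_m$; the polynomial $y_{j_m}(x,{\tilde c},m-1)$ coming from $Y^{J_{m-1}}$ is independent of $c_m$. So I must differentiate $\ln' y_{j_m}(x,{\tilde c},c_m,m) = \frac{\der_x y_{j_m}}{y_{j_m}}$ in $c_m$. Treating the case $j_m=0$: by the degree-increasing generation formula \Ref{tilde y0}, $y_0(x,{\tilde c},c_m,m) = y_{0,0}(x,{\tilde c}) + c_m\, y_0(x,{\tilde c},m-1)$, where I write $\tilde y_0$ for the normalized descendant and $y_0(x,{\tilde c},m-1)$ for the parent polynomial. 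Differentiating,
\be
\frac{\der}{\der c_m}\Bigl(\frac{\der_x y_0}{y_0}\Bigr)
= \frac{\der_x y_0(x,{\tilde c},m-1)}{y_0} - \frac{(\der_x y_0)\,y_0(x,{\tilde c},m-1)}{y_0^2}
= \frac{\Wr\bigl(y_0(x,{\tilde c},m-1),\,y_0(x,{\tilde c},c_m,m)\bigr)}{y_0(x,{\tilde c},c_m,m)^2}.
\ee
Now the Wronskian in the numerator: since $y_0(x,{\tilde c},c_m,m) = y_{0,0} + c_m y_0(x,{\tilde c},m-1)$ and $\Wr$ is bilinear and antisymmetric, $\Wr\bigl(y_0(x,{\tilde c},m-1),\,y_0(x,{\tilde c},c_m,m)\bigr) = \Wr\bigl(y_0(x,{\tilde c},m-1),\,y_{0,0}\bigr)$, which by the normalization in Section \ref{Degree increasing generation} equals $a\, y_1(x,{\tilde c},m-1)^4$ for some $a\in\C^\times$ (the same nonzero constant from the defining Wronskian equation $\Wr(y_0,y_{0,0}) = a\,y_1^4$). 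Putting the pieces together gives exactly \Ref{form der c-m 0} up to the sign, which I would track carefully. The case $j_m=1$ is entirely parallel, using \Ref{tilde y1} and $\Wr(y_1,y_{1,0}) = a\,y_0$, yielding \Ref{form der c-m 1}.

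The only mild subtlety — and the step I'd expect to require the most care — is bookkeeping of which polynomials and parameters each $g_\ell$ actually depends on, so as to justify cleanly that all terms with $\ell<m$ drop out. Once that is pinned down, the computation is the elementary Wronskian identity above, and the nonvanishing constant $a$ is inherited verbatim from the normalization already set up in Section \ref{Degree increasing generation}. I would also note that $y_0(x,{\tilde c},c_m,m)$ never vanishes identically (it is monic of positive degree), so dividing by its square is legitimate as an identity of rational functions.
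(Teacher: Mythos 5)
Your proposal is correct and follows essentially the same route as the paper's own proof: differentiate the explicit formula \Ref{oper2}, observe that only $g_m$ depends on $c_m$, and reduce $\der g_m/\der c_m$ to the Wronskian $\Wr(y_{j_m}(x,\tilde c,m-1),y_{j_m,0}(x,\tilde c))$, which the normalization of Section \ref{Degree increasing generation} identifies with $a\,y_1^4$ (resp.\ $a\,y_0$) up to a sign absorbed into $a\in\C^\times$. Your extra bookkeeping that the $g_\ell$ with $\ell<m$ are independent of $c_m$ is a point the paper leaves implicit, but it is the same argument.
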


\begin{proof}
Let $j_m=0$. Then $ y_{0}(x,{\tilde c},c_m,m)=y_{0,0}(x,{\tilde c}) + c_m y_{0}(x,{\tilde c},m-1),$
where $y_{0,0}(x,{\tilde c})$ is such that
 \bea
 \Wr (y_{0}(x,{\tilde c},m-1), y_{0,0}(x,{\tilde c})) \ =\ a\ y_{1}(x,{\tilde c},m-1)^4,
 \eea
 for some $a\in\C^\times$, see \Ref{tilde y0}.
We have $ g_m = \ln'(y_{0}(x,{\tilde c},c_m,m))- \ln' (y_{0}(x,{\tilde c},m-1))$.

By formula \Ref{oper2}, we have
\bea
\frac{\der \mu^J}{\der c_m}({\tilde c},c_m) = -\frac{\der g_m}{\der c_m}({\tilde c},c_m) \,h_0
\eea
and
\bea
&&
\frac{\der g_m}{\der c_m}({\tilde c},c_m) = \frac \der{\der c_m}\left(
\frac{y_{0,0}'(x,{\tilde c}) + c_m y_{0}'(x,{\tilde c},m-1)}{y_{0,0}(x,{\tilde c}) + c_m y_{0}(x,{\tilde c},m-1)}
\right)=
\\
&&
\phantom{aaaa}
=
\frac{\Wr (y_{0,0}(x,{\tilde c}), y_{0}(x,{\tilde c},m-1))}{(y_{0,0}(x,{\tilde c}) + c_m y_{0}(x,{\tilde c},m-1))^2}
=a\
\frac{y_{1}(x,{\tilde c},m-1)^4}
{y_{0}(x,{\tilde c},c_m,m)^2} \,.
\eea
This proves formula \Ref{form der c-m 0}. Formula \Ref{form der c-m 1} is proved similarly.
\end{proof}

Let us describe the kernels of the differentials of the  Miura maps $\frak{m}_i, i=0,1$, restricted to Miura opers of type $\AT$.
A Miura oper $\L=\der + \La + vh_0$ of type $\AT$ is mapped to the differential operator
$(\der + v)\der(\der-v) = \der^3-(2v'+v^2)\der -(v''+vv')$ by the Miura map $\frak{m}_0$ and to  the differential operator
$(\der - v)(\der+v)\der = \der^3+(v'-v^2)\der$ by the Miura map $\frak{m}_1$. The derivative maps are
\bea
d\frak{m}_0 &:& Xh_0 \mapsto -(2X'+ 2vX)\der - (X'' + vX'+v'X),
\\
d\frak{m}_1 &:& Xh_0 \mapsto  (X' - 2vX)\der,
\eea
where $x\in\B$.

\begin{lem}
\label{lem kernel}
Assume that $\L$ is associated to a pair  $(y_0,y_1)$, that is,   $v=\ln'\big(\frac{y_1^2}{y_0}\big)$.
Then the kernel of $d\frak{m}_0$ at $\L$ is one-dimensional and is generated by the function $\frac{y_0}{y_1^2}h_0$.
Also the kernel of $d\frak{m}_1$ at $\L$ is one-dimensional and is generated by the function $\frac{y_1^4}{y_0^2}h_0$.

\end{lem}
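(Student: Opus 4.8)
The plan is to compute each kernel directly from the explicit formulas for $d\frak{m}_0$ and $d\frak{m}_1$ given just above the statement, and then recognize the resulting scalar differential equations as ones whose rational (indeed polynomial-ratio) solutions we already understand from Section~\ref{sec cr and Miu}. Recall $v=\ln'(y_1^2/y_0)$, so that $v = 2\ln' y_1 - \ln' y_0$.

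\textbf{Kernel of $d\frak{m}_1$.} Since $d\frak{m}_1 : Xh_0 \mapsto (X'-2vX)\der$, an element $Xh_0$ lies in the kernel exactly when $X' = 2vX$, i.e. $\ln' X = 2v = \ln'(y_1^4/y_0^2)$. Integrating, $X = \mathrm{const}\cdot y_1^4/y_0^2$, so the kernel is one-dimensional and spanned by $\frac{y_1^4}{y_0^2}h_0$, as claimed. This direction is essentially immediate.

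\textbf{Kernel of $d\frak{m}_0$.} Here $d\frak{m}_0 : Xh_0 \mapsto -(2X'+2vX)\der - (X''+vX'+v'X)$, and both components must vanish. The coefficient of $\der$ vanishes iff $X' = -vX$, i.e. $\ln' X = -v = \ln'(y_0/y_1^2)$, giving $X = \mathrm{const}\cdot y_0/y_1^2$. One then checks that this same $X$ also kills the zeroth-order component: substituting $X' = -vX$ gives $X'' = -v'X - vX' = -v'X + v^2 X$, hence $X'' + vX' + v'X = -v'X + v^2X - v^2X + v'X = 0$ automatically. So the kernel is one-dimensional, spanned by $\frac{y_0}{y_1^2}h_0$. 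The only mild subtlety is that a priori the kernel could be larger than the intersection of the two separate conditions only if it were smaller; in fact the $\der$-coefficient condition already pins $X$ down to a one-dimensional space, and that space turns out to satisfy the second condition too, so the kernel is exactly one-dimensional. (If one wants, the fact that $\frac{y_0}{y_1^2}$ is indeed annihilated can also be seen conceptually: $\frak{m}_0(\L) = (\der+v)\der(\der-v)$, and $\der - v$ annihilates $y_1^2/y_0 \cdot$? — no; rather $(\der - v)$ sends a function $f$ to $f' - vf$, and one notes $y_0/y_1^2$ is, up to constant, $e^{-\int v}$, which is in the kernel of $\der - v$... wait, of $\der + v$; the cleanest route is the direct computation above.)

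I expect no real obstacle here: both claims reduce to solving $\ln' X = \pm v$ and one routine verification. The only thing to be careful about is bookkeeping of signs and the observation that for $d\frak{m}_0$ the first-order condition already forces one-dimensionality, so the second condition need only be checked for consistency rather than used to cut down further. One should also remark, for completeness, that the resulting $X$ is a rational function (a ratio of the polynomials $y_0,y_1$), which is what makes these kernel directions relevant to the deformability/generation picture of Theorem~\ref{ricc thm}.
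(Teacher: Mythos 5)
Your proposal is correct and follows essentially the same route as the paper: both kernels are computed by solving the first-order equation $\ln' X = \mp v$ coming from the coefficient of $\der$, which immediately yields the one-dimensional spans $y_0/y_1^2$ and $y_1^4/y_0^2$. Your explicit check that the zeroth-order component $X''+vX'+v'X$ vanishes automatically (it is just $(X'+vX)'$) is a detail the paper leaves implicit, but it is the same argument.
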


\begin{proof}

We have $X\in \on{ker}\,d\frak{m}_0$ if and only if $X'+ vX=0$. This implies the first statement.
Similarly,  $X\in \on{ker}\,d\frak{m}_1$ if and only if $X'-2vX=0$.
This implies the second statement.
\end{proof}

\section{Vector fields}
\label{sec Vector fields}

\subsection{Statement}
\label{sec statement}

Let $r>0$ and $r=1,5$ mod $6$. Recall that  we denote by $ \frac \der{\der t_r}$ the $r$-th mKdV vector field
on the space  $\mc M(\AT)$  of Miura opers of type $\AT$. We also denote by $ \frac \der{\der t_r}$ the $r$-th mKdV vector field of type $\At$
on the space  $\mc M(\At)$  of Miura opers of type $\At$.
We have a natural embedding $\mc M(\AT) \hookrightarrow \mc M(\At)$.
Under this embedding the vector $ \frac \der{\der t_r}$  on $\mc M(\AT)$ equals the vector field $ \frac \der{\der t_r}$ on
$\mc M(\At)$ restricted to  $\mc M(\AT)$,  see Section \ref{sec comp}.
We also denote by  $ \frac \der{\der t_r}$ the $j$-th KdV vector field
on the space  $\D$, see  Section \ref{sec KdV}.

For a Miura map $\frak m_i : \mc M\to \D,\ \Ll \mapsto L_i,$ denote by $d\frak m_i$  the associated
derivative map $T\mc M(\At) \to T\D$.  By Theorem
\ref{thm mkdvtokdv} we have $d\frak m_i:  \frac \der{\der t_r}\big|_{\Ll}
\mapsto \frac \der{\der t_r}\big|_{L_i}$.

Fix a basic sequence $J=(j_1,\dots, j_m)$. Consider the associated family $\mu^J:\C^m\to \mc M(\AT)$
of Miura opers.
For a vector field $\Ga $ on $\C^m$, we denote by
$\frac {\der \mu^J}{\der \Ga}$ the derivative of $\mu^J$ along the vector field.
The derivative is well-defined since $\mc M(\AT)$ is an affine space.

\begin{thm}
\label{thm main}
Let $r>0$ and $r=1,5$ mod $6$. Then there exists a polynomial vector field $\Ga_r$ on $\C^m$
such that
\bean
\label{formula main}
\frac \der{\der t_r}\big|_{\mu^J(c)} = \frac {\der \mu^J}{\der \Ga_r}(c)
\eean
for all $c\in\C^m$. If $m$ is even and $r> 3m$, then $\frac \der{\der t_r}\big|_{\mu^J(c)}=0$
for all $c\in\C^m$ and, hence, $\Ga_r=0$. If $m$ is odd and $j_1=j_m=0$, then for $r> 3m-2$ we have
$\frac \der{\der t_r}\big|_{\mu^J(c)}=0$
for all $c\in\C^m$ and, hence,  $\Ga_r=0$. If $m$ is odd and $j_1=j_m=1$, then for $r> 3m+1$ we have
$\frac \der{\der t_r}\big|_{\mu^J(c)}=0$
for all $c\in\C^m$ and, hence, $\Ga_r=0$.

\end{thm}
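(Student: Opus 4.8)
The plan is to prove the identity \Ref{formula main} by induction on $m$, running the induction in parallel with a sharpening that controls the order of vanishing of the relevant elements of $\z$. The base case $m=0$ is the statement that the $r$-th mKdV flow fixes the point $\Ll^\emptyset = \der + \La$; indeed, for $\L = \der+\La$ one may take $U=0$ in Proposition \ref{Prop U}, so $\phi(\La_r) = \La_r$, which has only the component of degree $r>0$, hence $\phi(\La_r)^0 = 0$ and by Lemma \ref{lem der} the flow vanishes. For the inductive step I would use Corollary \ref{cor der}: writing $\tilde J = (j_1,\dots,j_{m-1})$ and $e^{-\ad U^J(c)} = e^{\ad g_m f_{j_m}}\,e^{-\ad U^{\tilde J}(\tilde c)}$ from \Ref{T}, one computes
\beq
(e^{-\ad U^J(c)}(\La_r))^0 \ = \ \big(e^{\ad g_m f_{j_m}}(e^{-\ad U^{\tilde J}(\tilde c)}(\La_r))\big)^0 .
\eeq
Expanding $e^{-\ad U^{\tilde J}}(\La_r) = \sum_i \psi_i$ with $\psi_i\in\B(\g^i)$, and using that $\ad f_{j_m}$ lowers degree by $1$ while $g_m f_{j_m}\in\B(\g^{-1})$, the degree-$0$ part of $e^{\ad g_m f_{j_m}}(\sum_i\psi_i)$ is $\psi_0 + [g_m f_{j_m},\psi_1] + \tfrac12[g_m f_{j_m},[g_m f_{j_m},\psi_2]] + \cdots$, a finite sum since $\g^{<-2}$ pairs trivially far enough down. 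By the inductive hypothesis applied to $\tilde J$, the $x$-derivative of $\psi_0$ produces $\frac{\der\mu^{\tilde J}}{\der\Ga_r^{\tilde J}}$, which via the relation \Ref{oper2} (both $\mu^J$ and $\mu^{\tilde J}$ are built from the same $g_1,\dots,g_{m-1}$ plus the extra $g_m h_{j_m}$) contributes a vector field in the $c_1,\dots,c_{m-1}$ directions; the remaining terms, all proportional to $h_{j_m}$ after bracketing with $f_{j_m}$, must be matched against $\frac{\der\mu^J}{\der c_m}$, whose explicit form is given in Lemma \ref{lem der c-m} as a multiple of $\frac{y_{1-j_m}}{y_{j_m}^2}$-type expression times $h_{j_m}$. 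So the coefficient of $\Ga_r$ in the $\der/\der c_m$ direction is the ratio of these two scalar functions, and the content of the theorem is that this ratio is a \emph{polynomial} in $c$.

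To establish polynomiality and the vanishing bounds I would exploit the Miura-map compatibility of Lemmas \ref{lem j j'} and \ref{lem kernel}. Applying $d\frak m_{1-j_m}$ to \Ref{formula main}: by Lemma \ref{lem j j'} the composition $\frak m_{1-j_m}\circ\mu^J$ is independent of $c_m$ and equals $\frak m_{1-j_m}\circ\mu^{\tilde J}(\tilde c)$, and by Theorem \ref{thm mkdvtokdv} (and the remark that $d\frak m_i$ intertwines the mKdV and KdV flows) the left side maps to the $r$-th KdV flow of $L_{1-j_m}$, which by induction is a polynomial vector field in $c_1,\dots,c_{m-1}$; meanwhile by Lemma \ref{lem kernel} the kernel of $d\frak m_{1-j_m}$ at $\mu^J(c)$ is exactly the line through $\frac{\der\mu^J}{\der c_m}$. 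Hence $\Ga_r$ is determined: its $c_1,\dots,c_{m-1}$ components are pinned down (polynomially) by the $d\frak m_{1-j_m}$ image, and its $c_m$ component is then the unique scalar making the $h_{j_m}$-components agree. Rationality of that scalar is automatic from the explicit formulas; polynomiality requires checking that the denominator $y_{j_m}(x,\tilde c,c_m,m)^2$ divides the numerator, which follows because the left side of \Ref{formula main}, being $-\frac{d}{dx}(\cdots)^0$ with $(\cdots)^0\in\B(\z^0)=\B(\C h_0)$ a rational function that is in fact polynomial-over-polynomial with controlled poles (the poles of $\mu^J(c)$ itself are only at the roots of the $y$'s to first order, and the mKdV flow preserves this pole structure).

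For the degree bounds, the key quantitative input is Lemma \ref{lem cyclic increas}: after $m$ steps the degrees $k_{j_m}$ of $y_{j_m}$ grow quadratically, and the leading behavior of $e^{-\ad U^J(c)}(\La_r)$ near $x=\infty$ is governed by $\La_r\sim \La^r$, whose degree-$0$ component after conjugation decays like $x^{-(\text{something linear in }m)-1}$; once $r$ exceeds the threshold, $(e^{-\ad U^J(c)}(\La_r))^0$ is a rational function vanishing at infinity to an order forcing it to be $0$ (a rational function that is a total $x$-derivative of something with no poles and vanishing at infinity, with numerator degree below denominator degree by enough, must vanish). The precise thresholds $3m$, $3m-2$, $3m+1$ come from tracking, case by case on the parity of $m$ and on $j_1=j_m$, the exact degrees $\deg y_0, \deg y_1$ from Lemma \ref{lem cyclic increas} and how $\La_r$ of degree $r$ conjugated by $U^J(c)$ of top degree $-1$ produces a degree-$0$ term whose $x\to\infty$ order of vanishing is $r$ minus (a linear function of $m$). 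I expect the main obstacle to be precisely this last bookkeeping: getting the constants $3m$, $3m-2$, $3m+1$ exactly right, which requires carefully relating the "depth" of $U^J(c)$ (how negative its components go, controlled by $m$ via the generation procedure) to the order of vanishing at infinity of each homogeneous component of $e^{-\ad U^J(c)}(\La_r)$, and separately handling the three parity/endpoint cases. The polynomiality of $\Ga_r$, by contrast, should fall out cleanly from the Miura-map argument once the induction is set up.
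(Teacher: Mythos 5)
Your treatment of the first statement (existence and polynomiality of $\Ga_r$) is essentially the paper's argument: induct on $m$, use Lemma \ref{lem j j'} together with the intertwining of mKdV and KdV flows under $d\frak m_{1-j_m}$ to show that the difference $\frac{\der}{\der t_r}\big|_{\mu^J(c)} - \frac{\der\mu^J}{\der \Ga_{r,{\tilde J}}}(c)$ lies in $\ker d\frak m_{1-j_m}$, identify that kernel via Lemma \ref{lem kernel} with the line spanned by $\frac{\der\mu^J}{\der c_m}$ (Lemma \ref{lem der c-m}), and extract a scalar $\ga_m(\tilde c,c_m)$. The paper establishes polynomiality of $\ga_m$ by comparing Laurent expansions at $x=\infty$: both vectors and the kernel generator expand as $\sum_i B_i(c)x^{-i}$ with polynomial $B_i$, and the kernel generator has leading Laurent coefficient $1$ because the $y_j$ are monic, so the ratio is polynomial in $c$. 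Your phrasing that ``the denominator $y_{j_m}^2$ must divide the numerator'' is off target, since $\ga_m$ is a function of $c$ alone and the issue is the proportionality constant between two functions of $x$, but the skeleton of this part matches the paper.

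The genuine gap is in the vanishing bounds. You propose to derive them from the order of vanishing at $x=\infty$ of $(e^{-\ad U^J(c)}(\La_r))^0$, invoking the quadratic degree growth of Lemma \ref{lem cyclic increas} and the principle that a rational function vanishing at infinity to sufficiently high order must be zero. That principle is false (a nonzero rational function can vanish at infinity to arbitrary order), and the quadratic growth of $\deg y_j$ plays no role here. The correct mechanism, used in the paper's Lemma \ref{lem r>2m}, is purely algebraic: by \Ref{T} and Lemma \ref{lem exp}, $e^{-\ad U^J(c)}$ acts on $\La_r=\La^r$ by conjugation by a product of matrices $1+g\,e_{3,3}\La^{-1}$ (for $j_\ell=0$) and $1+2g(e_{1,1}+e_{2,2})\La^{-1}+2g^2e_{1,1}\La^{-2}$ (for $j_\ell=1$), which lower the principal degree by at most $1$ and $2$ respectively; hence conjugation lowers the degree of $\La^r$ by at most $2\bigl(\#\{\ell: j_\ell=0\}+2\,\#\{\ell: j_\ell=1\}\bigr)$, and the degree-zero graded component is \emph{identically zero} --- not merely small at infinity --- once $r$ exceeds this count. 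For a basic sequence the count equals $3m$ ($m$ even), $3m-1$ ($m$ odd, $j_1=j_m=0$; combined with $r\equiv 1,5\bmod 6$ this yields the stated threshold $3m-2$), and $3m+1$ ($m$ odd, $j_1=j_m=1$). Without this graded-depth count your argument does not establish the second, third, and fourth assertions of the theorem.
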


\begin{cor}
\label{cor Main}
The family $\mu^J$ of Miura opers is invariant with respect to all mKdV flows of type $\AT$ and
is point-wise fixed by flows with $r>3m+1$.

\end{cor}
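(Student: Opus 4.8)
The plan is to prove Theorem \ref{thm main} by induction on $m$, using the two Miura maps $\frak m_0,\frak m_1$ of type $\At$ to ``peel off'' the last generation step, together with the explicit description of the kernels of $d\frak m_i$ from Lemma \ref{lem kernel} and the comparison of mKdV flows from Section \ref{sec comp}. The base case $m=0$: here $\mu^\emptyset(\mathrm{pt})=\Ll^\emptyset=\der+\La$, and by Corollary \ref{cor der} the flow value is $-\frac\der{\der x}(\La_r)^0=-\frac\der{\der x}(\La_r^0)$; since $\La_r\in\z^r\subset\g^r$ with $r>0$, its degree-$0$ component vanishes, so $\frac\der{\der t_r}|_{\Ll^\emptyset}=0$ and we may take $\Ga_r=0$ (consistent with the bound $r>3m=0$, i.e. $r\ge 1$).

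For the inductive step, write $J=(\tilde J,j_m)$ with $\tilde J=(j_1,\dots,j_{m-1})$. Suppose first $j_m=0$. By Lemma \ref{lem j j'}, $\frak m_1\circ\mu^J(\tilde c,c_m)=\frak m_1\circ\mu^{\tilde J}(\tilde c)$ for all $(\tilde c,c_m)$. Differentiating along $\frac\der{\der t_r}$ and using $d\frak m_1:\frac\der{\der t_r}|_\Ll\mapsto\frac\der{\der t_r}|_{L_1}$ together with the inductive hypothesis (which gives a polynomial field $\Ga_r^{\tilde J}$ on $\C^{m-1}$ with $\frac\der{\der t_r}|_{\mu^{\tilde J}(\tilde c)}=\frac{\der\mu^{\tilde J}}{\der\Ga_r^{\tilde J}}(\tilde c)$), one deduces that $d\frak m_1$ applied to $\frac\der{\der t_r}|_{\mu^J(\tilde c,c_m)}$ equals $d\frak m_1$ applied to the lift of $\Ga_r^{\tilde J}$. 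Hence the difference lies in $\ker d\frak m_1$, which by Lemma \ref{lem kernel} is one-dimensional, spanned by $\frac{y_1^4}{y_0^2}h_0$; by Lemma \ref{lem der c-m} this kernel direction is exactly (a nonzero scalar times) $\frac{\der\mu^J}{\der c_m}$. So $\frac\der{\der t_r}|_{\mu^J(c)}=\frac{\der\mu^J}{\der\Ga_r^{\tilde J}}(c)+f_r(c)\,\frac{\der\mu^J}{\der c_m}(c)$ for some function $f_r$, i.e. $\Ga_r=\Ga_r^{\tilde J}+f_r\,\frac\der{\der c_m}$. The symmetric argument with $\frak m_0$ handles $j_m=1$. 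One then checks $f_r$ is polynomial in $c$: both sides of the identity are rational in $c$ with controlled denominators (powers of $y_{j_m}(x,c,m)$), and matching the principal parts / leading asymptotics in $x$ forces $f_r$ to be polynomial; this is the kind of routine-but-delicate bookkeeping I would defer.

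The vanishing statements for large $r$ require a degree count. The key is to track the top degree in $x$ appearing in $\phi(\La_r)^0$, equivalently in $(e^{-\ad U^J(c)}(\La_r))^0$ via Corollary \ref{cor der}. Using the factorization \Ref{T}, $e^{-\ad U^J(c)}=e^{\ad g_m f_{j_m}}\cdots e^{\ad g_1 f_{j_1}}$, and the fact that each $g_\ell$ is $\ln'$ of a ratio of the generated polynomials, whose degrees are governed by Lemma \ref{lem cyclic increas}, one bounds the order of the pole/degree in $x$ of each term. The element $\La_r$ has ``$\la$-degree'' growing linearly in $r$ (Lemma on $A_{6m\pm1}=(A_1)^{6m\pm1}$); conjugating by the $e^{\ad g_\ell f_{j_\ell}}$'s can only produce in the degree-$0$ part a function whose $x$-degree is bounded by something like $3m+1$ minus $r$ (with the precise constant depending on the parity of $m$ and on $j_1,j_m$, matching the three cases $3m$, $3m-2$, $3m+1$ in the statement), so once $r$ exceeds that threshold the degree-$0$ component is forced to be constant in $x$, hence its $x$-derivative vanishes. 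I expect this degree/pole-order estimate to be the main obstacle: it needs a careful simultaneous induction keeping track of the exact leading behavior in both $x$ and the spectral parameter through all $m$ conjugations, and the three different bounds according to parity and endpoints of $J$ must be extracted from that bookkeeping. Finally, Corollary \ref{cor Main} is immediate: \Ref{formula main} shows each mKdV vector field is everywhere tangent to $\mu^J(\C^m)$, and the vanishing for $r>3m+1$ (the weakest of the three thresholds) gives the point-wise fixing.
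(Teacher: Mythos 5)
Your proposal is correct and follows the paper's route: the corollary is obtained exactly as in the paper from Theorem \ref{thm main} (tangency of each mKdV field to $\mu^J(\C^m)$ via \Ref{formula main}, and pointwise fixing from the weakest of the three vanishing thresholds, $r>3m+1$), and your sketch of the theorem itself mirrors the paper's induction on $m$ using Lemma \ref{lem j j'}, the kernel description of Lemma \ref{lem kernel}, and Lemma \ref{lem der c-m}. The only point worth flagging is that the vanishing for large $r$ is, in the paper, a purely algebraic count of negative powers of $\La^{(1)}$ supplied by Lemma \ref{lem exp} (one per $f_0$-step, two per $f_1$-step, on each side of the conjugation in \Ref{T mkdv}), which makes the degree-zero component of $e^{-\ad U^J(c)}(\La_r)$ identically zero once $r$ exceeds that count — not an estimate on $x$-degrees as you describe — so this step is considerably less delicate than you anticipate.
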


\subsection{Proof of Theorem \ref{thm main} for $m=1$}

If $m=1$, then $J=(0)$ of $J=(1)$.

Let $J=(0)$.  Then
\bea
\mu^J(c_1) &=& e^{g_1F_{0}}\Ll^\emptyset e^{-g_1F_0}
= (1 + g_1e_{3,3}\La^{-1})\Ll^\emptyset (1 - g_1e_{3,3}\La^{-1})
=
\\
&=&
\der + \La + g_1(e_{3,3}-e_{1,1})= \der + \La - g_1 h_0,
\eea
where $g_1 = \frac 1{x+c_1}$. By formula \Ref{T mkdv},
\bea
\frac \der{\der t_r}\big|_{\mu^J(c_1)}
= - \frac {\der}{\der x}((1 + g_1e_{3,3}\La^{-1}) \Lambda^r (1 - g_1e_{3,3}\La^{-1}))^0.
\eea
It follows from Lemma \ref{lem lambda} that $\frac \der{\der t_r}\big|_{\mu^J(c_1)}=0$ for $r>1$ and hence
 $\Ga_r=0$. For $r=1$, we have $\frac \der{\der t_1}\big|_{\mu^J(c_1)} = -\frac 1{(x+c_1)^2}(e_{1,1}-e_{3,3})$. On the other hand,
 $\frac\der{\der c_1}\mu^J(c_1)
 = -\frac{\der g_1}{\der c_1}h_0 = \frac 1{(x+c_1)^2}h_0$. Hence  $\Ga_1= -\frac \der{\der c_1}$.
Theorem \ref{thm main} is proved for $m=1$, $J=(0)$.

Let $J=(1)$.  Then
\bea
\mu^J(c_1) = e^{\ad g_1(2F_1+2F_2)}(\Ll^\emptyset)= \der + \La - g_1 h_1,
\eea
where $g_1 = \frac 1{x+c_1}$. By formula \Ref{T mkdv},
\bea
\frac \der{\der t_r}\big|_{\mu^J(c_1)}
=
&-& \frac {\der}{\der x}\Big((1 + g_1(e_{1,1}+e_{2,2})\La^{-1} +2g_1^2e_{1,1}\La^{-2})\times
\\
&\times &
 \Lambda^r(1 - g_1(e_{1,1}+e_{2,2})\La^{-1} +2g_1^2e_{1,1}\La^{-2})\Big)^0.
\eea
It follows from Lemma \ref{lem lambda} that $\frac \der{\der t_r}\big|_{\mu^J(c_1)}=0$ for $r>4$ and hence
 $\Ga_r=0$. For $r=1$, we have $\frac \der{\der t_1}\big|_{\mu^J(c_1)} = \frac 1{(x+c_1)^2}(e_{1,1}-e_{3,3})$. On the other hand,
$\frac\der{\der c_1}\mu^J(c_1)
 = -\frac{\der g_1}{\der c_1}h_1 = \frac 1{(x+c_1)^2}h_1$. Hence  $\Ga_1= -\frac 12\frac \der{\der c_1}$.
Theorem \ref{thm main} is proved for $m=1$, $J=(1)$.

\subsection{Proof of Theorem \ref{thm main} for $m>1$}
\begin{lem}
\label{lem r>2m}
If $m$ is even and $r> 3m$, then $\frac \der{\der t_r}\big|_{\mu^J(c)}=0$
for all $c\in\C^m$ and, hence, $\Ga_r=0$. If $m$ is odd and $j_1=j_m=0$, then for $r> 3m-2$ we have
$\frac \der{\der t_r}\big|_{\mu^J(c)}=0$
for all $c\in\C^m$ and, hence,  $\Ga_r=0$. If $m$ is odd and $j_1=j_m=1$, then for $r> 3m+1$ we have
$\frac \der{\der t_r}\big|_{\mu^J(c)}=0$
for all $c\in\C^m$ and, hence, $\Ga_r=0$.

\end{lem}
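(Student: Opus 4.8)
The plan is to deduce this from Corollary~\ref{cor der}. Since $\frac \der{\der t_r}\big|_{\mu^J(c)}=-\frac \der{\der x}\bigl(e^{-\ad U^J(c)}(\La_r)\bigr)^0$, it is enough to prove that the degree-$0$ component of $e^{-\ad U^J(c)}(\La_r)$ vanishes for $r$ in the asserted ranges. Realize $\g(\AT)\subset\gA\subset\slt[\la,\la^{-1}]$ as in Section~\ref{sec KM}; then $\La_r$ is the matrix $(\La^{(1)})^r$, the generators $f_0,f_1$ are represented by $\la^{-1}e_{3,1}$ and $2e_{1,2}+2e_{2,3}$, and $e^{\ad A}(B)=e^ABe^{-A}$. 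By \Ref{T},
\[
e^{-\ad U^J(c)}(\La_r)\;=\;M\,(\La^{(1)})^r\,M^{-1},\qquad M\,=\,e^{g_m f_{j_m}}\cdots e^{g_1 f_{j_1}}\quad(\text{as matrices}).
\]

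By Lemma~\ref{lem exp}, $e^{\pm g_\ell f_0}=1\pm g_\ell e_{3,3}(\La^{(1)})^{-1}$ and $e^{\pm g_\ell f_1}=1\mp2g_\ell(e_{1,1}+e_{2,2})(\La^{(1)})^{-1}+2g_\ell^2 e_{1,1}(\La^{(1)})^{-2}$. Multiplying these out and using the commutation rules of Lemma~\ref{lem lambda} to collect all powers of $(\La^{(1)})^{-1}$ on the right, one gets $M=\sum_{j=0}^{K}P_j(\La^{(1)})^{-j}$ and $M^{-1}=\sum_{j=0}^{K}Q_j(\La^{(1)})^{-j}$ with $P_j,Q_j$ diagonal, $P_0=Q_0=1$, and $K\le p+2q$, where $p=\#\{i:j_i=0\}$ and $q=\#\{i:j_i=1\}$ (the depths in $(\La^{(1)})^{-1}$ add under multiplication, and $e^{\pm g_\ell f_0}$, $e^{\pm g_\ell f_1}$ have depths $1$, $2$). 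Since conjugating a diagonal matrix by a power of $\La^{(1)}$ yields a diagonal matrix (Lemma~\ref{lem lambda}),
\[
M(\La^{(1)})^rM^{-1}=\sum_{0\le j,j'\le K}P_j\bigl((\La^{(1)})^{r-j}Q_{j'}(\La^{(1)})^{-(r-j)}\bigr)(\La^{(1)})^{\,r-j-j'},
\]
with each summand a diagonal matrix times $(\La^{(1)})^{\,r-j-j'}$, hence homogeneous of degree $r-j-j'$. Thus the degree-$0$ component collects only the terms with $j+j'=r$; it vanishes for $r>2K$, and for $r=2K$ it equals $P_K\bigl((\La^{(1)})^{K}Q_K(\La^{(1)})^{-K}\bigr)$. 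For a basic sequence, $p=q=m/2$ with $2K=3m$ if $m$ is even; $p=(m+1)/2$, $q=(m-1)/2$ with $2K=3m-1$ if $m$ is odd and $j_1=j_m=0$; $p=(m-1)/2$, $q=(m+1)/2$ with $2K=3m+1$ if $m$ is odd and $j_1=j_m=1$. This already proves the claim for $m$ even ($r>3m$) and for $m$ odd with $j_1=j_m=1$ ($r>3m+1$).

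It remains to treat $r=2K=3m-1$ when $m$ is odd and $j_1=j_m=0$, i.e. to show $P_K\bigl((\La^{(1)})^{K}Q_K(\La^{(1)})^{-K}\bigr)=0$. The $(\La^{(1)})^{-K}$-component of $M$ is the product of the deepest parts of all factors; the leftmost factor is $e^{g_m f_0}=1+g_m e_{3,3}(\La^{(1)})^{-1}$, whose deepest part carries $e_{3,3}$ on the left, and this $e_{3,3}$ is preserved under all subsequent right multiplications (which only permute and rescale diagonal entries), so $P_K=c\,e_{3,3}$ for a scalar $c$; similarly, the leftmost factor of $M^{-1}=e^{-g_1 f_{j_1}}\cdots e^{-g_m f_{j_m}}$ is $e^{-g_1 f_0}$, so $Q_K=c'\,e_{3,3}$. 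Writing $m=2t+1$ we have $K=(3m-1)/2=3t+1\equiv 1\pmod 3$, whence $(\La^{(1)})^{K}e_{3,3}(\La^{(1)})^{-K}=e_{3+K,3+K}=e_{1,1}$ (indices mod $3$) by Lemma~\ref{lem lambda}. Therefore $P_K\bigl((\La^{(1)})^{K}Q_K(\La^{(1)})^{-K}\bigr)=cc'\,e_{3,3}e_{1,1}=0$, so the degree-$0$ component of $e^{-\ad U^J(c)}(\La_r)$ vanishes for $r=3m-1$ too, i.e. for all $r>3m-2$, and the proof is complete.

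The main obstacle is the bookkeeping in the two middle paragraphs: verifying that $M$ and $M^{-1}$ indeed reduce to the form $\sum_{j\le K}P_j(\La^{(1)})^{-j}$ with diagonal $P_j,Q_j$ and depth $K\le p+2q$, and tracking correctly the cyclic index shifts mod $3$ coming from Lemma~\ref{lem lambda}, on which both the degree count and the final identity $e_{3,3}e_{1,1}=0$ rely. The case split mirrors exactly where the ``extra'' generator of a basic sequence sits; the improvement from $r>3m-1$ to $r>3m-2$ when $j_1=j_m=0$ comes solely from that last identity.
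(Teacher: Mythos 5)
Your proposal is correct and follows essentially the same route as the paper, which proves the lemma by combining formula \Ref{T mkdv} with Lemmas \ref{lem exp} and \ref{lem lambda}; you have simply carried out in full the depth count in $(\La^{(1)})^{-1}$ that the paper leaves implicit. Your extra argument for $r=2K=3m-1$ in the case $j_1=j_m=0$ (via $P_K=c\,e_{3,3}$, $Q_K=c'e_{3,3}$ and $e_{3,3}e_{1,1}=0$) is valid, though strictly speaking that case is vacuous since $3m-1$ is even for odd $m$ and hence is not an admissible index $r\equiv 1,5 \pmod 6$.
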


\begin{proof}
 The vector
 $\frac \der{\der t_r}\big|_{\mu^J(c)}$ equals the right hand side of formula  \Ref{T mkdv}.
By Lemmas \ref{lem exp} and \ref{lem lambda} the right hand side of \Ref{T mkdv} is zero if $r$ is as described in the lemma.
\end{proof}

We prove the first statement of Theorem \ref{thm main} by induction on $m$.
Assume that the statement is proved for ${\tilde J}=(j_1,\dots,j_{m-1})$.
Let
\bea
\label{J'2}
Y^{{\tilde J}}\ : \
{\tilde c}=(c_1,\dots,c_{m-1}) \ \mapsto \ (y_0(x,{\tilde c}), y_1(x,{\tilde c}))
\eea
be the generation of pairs in the ${\tilde J}$-th direction. Then the generation
of pairs in the $J$-th direction is
\bea
\label{J2}
Y^{J}\ :\ \C^m \mapsto (\C[x])^2, \quad
({\tilde c},c_m) \mapsto
\ (\dots,  y_{j_m,0}(x,{\tilde c}) + c_m y_{j_m}(x,{\tilde c}),\dots,
),
\eea
see \Ref{J'} and \Ref{Ja}.
We have $g_m = \ln'(y_{j_m,0}(x,{\tilde c}) + c_m y_{j_m}(x,{\tilde c}))- \ln' (y_{j_m}(x,{\tilde c}))$,
see \Ref{g's}.

By the induction assumption,
there exists a polynomial vector field $\Gamma_{r,{\tilde J}}=\sum_{i=1}^{m-1}\ga_i(\tilde c)\frac\der{\der c_i}$ on $\C^{m-1}$ such that
\bean
\label{fromula main m-1}
\frac \der{\der t_r}\big|_{\mu^{{\tilde J}}({\tilde c})} = \frac {\der \mu^{{\tilde J}} }{\der \Ga_{r,{\tilde J}}}({\tilde c})
\eean
for all ${\tilde c}\in\C^{m-1}$.

\begin{thm}
\label{prop ind}
There exists a scalar polynomial
$\ga_{m}(\tilde c,c_m)$ on $\C^m$  such that the vector field
$\Ga_r = \Gamma_{r,{\tilde J}} + \ga_{m}({\tilde c},c_m)\frac \der{\der c_m}$
satisfies \Ref{formula main} for all $(\tilde c,c_m)\in\C^m$.
\end{thm}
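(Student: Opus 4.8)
The plan is to carry out the induction step by comparing $\frac{\der}{\der t_r}|_{\mu^J(\tilde c,c_m)}$ with the already-constructed vector field on the smaller cell $\C^{m-1}$, and then correcting by a multiple of $\frac{\der}{\der c_m}$. First I would apply a Miura map to kill the ambiguity coming from the last generation step. Concretely, suppose $j_m=0$ (the case $j_m=1$ is symmetric). By Lemma \ref{lem j j'}, $\frak m_1\circ\mu^J(\tilde c,c_m)=\frak m_1\circ\mu^{\tilde J}(\tilde c)$, so this composite does not depend on $c_m$ at all. Since the mKdV flow of type $\AT$ agrees with the one of type $\At$ under the embedding $\mc M(\AT)\hookrightarrow\mc M(\At)$, and $d\frak m_1$ intertwines the $r$-th mKdV flow with the $r$-th KdV flow (Theorem \ref{thm mkdvtokdv}), we get
\begin{equation*}
d\frak m_1\Big(\frac\der{\der t_r}\Big|_{\mu^J(\tilde c,c_m)}\Big)
=\frac\der{\der t_r}\Big|_{\frak m_1\mu^J(\tilde c,c_m)}
=\frac\der{\der t_r}\Big|_{\frak m_1\mu^{\tilde J}(\tilde c)}
=d\frak m_1\Big(\frac\der{\der t_r}\Big|_{\mu^{\tilde J}(\tilde c)}\Big)
=d\frak m_1\Big(\frac{\der\mu^{\tilde J}}{\der\Ga_{r,\tilde J}}(\tilde c)\Big),
\end{equation*}
where in the last step I used the induction hypothesis \Ref{fromula main m-1}. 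Regarding $\Ga_{r,\tilde J}$ as a vector field on $\C^m$ (acting trivially in the $c_m$-direction), the difference
\begin{equation*}
W(\tilde c,c_m):=\frac\der{\der t_r}\Big|_{\mu^J(\tilde c,c_m)}-\frac{\der\mu^{J}}{\der\Ga_{r,\tilde J}}(\tilde c,c_m)
\end{equation*}
therefore lies in $\ker d\frak m_1$ at the Miura oper $\mu^J(\tilde c,c_m)$. Here one must check that $\frac{\der\mu^J}{\der\Ga_{r,\tilde J}}$ also maps into $\ker d\frak m_1$; this holds because $\Ga_{r,\tilde J}$ involves only $\der/\der c_1,\dots,\der/\der c_{m-1}$, and differentiating the identity $\frak m_1\mu^J(\tilde c,c_m)=\frak m_1\mu^{\tilde J}(\tilde c)$ in those directions, then applying the induction step at level $m-1$, keeps us inside the relevant kernel — more directly, $\frac{\der\mu^J}{\der c_i}$ for $i<m$ differs from $\frac{\der\mu^{\tilde J}}{\der c_i}$ by a term that Lemma \ref{lem der c-m} and Lemma \ref{lem kernel} control.

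Next I would identify $\ker d\frak m_1$ explicitly. By Lemma \ref{lem kernel}, at a Miura oper associated to a pair $(y_0,y_1)$ the kernel of $d\frak m_1$ is one-dimensional, spanned by $\frac{y_1^4}{y_0^2}h_0$. On the other hand, Lemma \ref{lem der c-m} tells us that $\frac{\der\mu^J}{\der c_m}(\tilde c,c_m)=-a\,\frac{y_1(x,\tilde c,m-1)^4}{y_0(x,\tilde c,c_m,m)^2}h_0$ for some $a\in\C^\times$, which spans exactly the same line, since $y_1(x,\tilde c,m-1)=y_1(x,\tilde c,c_m,m)$ when $j_m=0$. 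Hence $W(\tilde c,c_m)=\ga_m(\tilde c,c_m)\,\frac{\der\mu^J}{\der c_m}(\tilde c,c_m)$ for a uniquely determined scalar function $\ga_m$, and with that choice the vector field $\Ga_r=\Ga_{r,\tilde J}+\ga_m\frac\der{\der c_m}$ satisfies \Ref{formula main}. The remaining point is to show $\ga_m$ is a polynomial in $(\tilde c,c_m)$. For this I would compare leading behavior in $x$: both $\frac\der{\der t_r}|_{\mu^J}$ (via Corollary \ref{cor der}, i.e. formula \Ref{T mkdv}, together with Lemmas \ref{lem exp}, \ref{lem lambda}) and $\frac{\der\mu^J}{\der c_m}$ are explicit rational functions of $x$ whose coefficients are polynomial in the $c_i$; taking the ratio of the $h_0$-components at a convenient value of $x$ (or reading off a single coefficient in a partial-fraction / Laurent expansion) exhibits $\ga_m$ as a ratio of polynomials which, by the one-dimensionality of the kernel, has no poles, hence is itself a polynomial.

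The main obstacle I anticipate is the polynomiality of $\ga_m$: the identity $W=\ga_m\frac{\der\mu^J}{\der c_m}$ holds as rational functions of $x$ for each fixed $c$, so $\ga_m(c)$ is a priori only a rational function of $c$, and one has to rule out poles in $c$. The cleanest route is to bound the $x$-degrees: by Lemma \ref{lem r>2m} the left side $\frac\der{\der t_r}|_{\mu^J(c)}$ has a controlled (finite, $r$- and $m$-dependent) pole order in $x$ with polynomial-in-$c$ numerator, $\mu^J(c)$ has monic polynomial entries $y_j(x,c)$ whose coefficients are polynomial in $c$ (Lemma \ref{lem gen procedure}), so the denominators appearing are powers of these monic polynomials; matching $W$ against the explicit generator $\frac{y_1^4}{y_0^2}h_0$ of the kernel then forces $\ga_m(c)$ to be a polynomial. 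This also makes transparent the vanishing statements: when $r$ exceeds the stated bound, Lemma \ref{lem r>2m} gives $\frac\der{\der t_r}|_{\mu^J(c)}=0$, and by the induction hypothesis $\Ga_{r,\tilde J}=0$ as well, so $\ga_m=0$ and $\Ga_r=0$, completing the induction.
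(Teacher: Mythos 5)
Your proposal is correct and follows essentially the same route as the paper: show the difference of the two vector fields lies in $\ker d\frak m_{1-j_m}$ (the paper's Lemma \ref{lem ker 1}, whose proof it defers to \cite{VW} but which you spell out via Lemma \ref{lem j j'} and Theorem \ref{thm mkdvtokdv}), identify that kernel with the span of $\frac{\der\mu^J}{\der c_m}$ using Lemmas \ref{lem kernel} and \ref{lem der c-m}, and deduce polynomiality of $\ga_m$ from Laurent expansions at $x=\infty$ with monic leading terms, exactly as in the paper's Proposition \ref{prop polyn}.
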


The first statement of Theorem \ref{thm main} follows from Theorem \ref{prop ind}.

\subsection{ Proof of Theorem \ref{prop ind}}
\label{sec proof prop ind}

\begin{lem}
\label{lem ker 1}
If $j_m=1$, then for all $({\tilde c},c_m)\in\C^m$, we have
\bean
\label{formula for ker j_m=1}
d\frak m_0\big|_{\mu^J({\tilde c},c_m)} \left(\frac{\der}{\der t_r}\big|_{\mu^J({\tilde c},c_m)} -
\frac{\der\mu^J}{\der \Ga_{r,{\tilde J}}}({\tilde c},c_m)\right) =0\,,
\eean
If $j_m=0$, then for all $({\tilde c},c_m)\in\C^m$, we have
\bean
\label{formula for ker j_m=0}
d\frak m_1\big|_{\mu^J({\tilde c},c_m)} \left(\frac{\der}{\der t_r}\big|_{\mu^J({\tilde c},c_m)} -
\frac{\der\mu^J}{\der \Ga_{r,{\tilde J}}}({\tilde c},c_m)\right) =0\,.
\eean
\end{lem}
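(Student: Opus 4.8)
The plan is to prove the two equalities \Ref{formula for ker j_m=1} and \Ref{formula for ker j_m=0} by differentiating the two identities of Lemma \ref{lem j j'} and feeding in the induction hypothesis \Ref{fromula main m-1} together with the functoriality of the Miura maps under the flows. I would treat the case $j_m=1$ and obtain the case $j_m=0$ by exchanging the roles of the indices $0$ and $1$ (equivalently, of $\frak m_0$ and $\frak m_1$) throughout, since Lemma \ref{lem j j'}, Lemma \ref{lem formula}, and the definitions are symmetric under this exchange.

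\textbf{Main argument.} Assume $j_m=1$. By Lemma \ref{lem j j'} the composition $\frak m_0\circ\mu^J:\C^m\to\D$ is independent of the last variable $c_m$; precisely, it equals $(\frak m_0\circ\mu^{\tilde J})\circ p$, where $p:\C^m\to\C^{m-1}$ is the projection $(\tilde c,c_m)\mapsto\tilde c$. The vector field $\Gamma_{r,\tilde J}=\sum_{i=1}^{m-1}\gamma_i(\tilde c)\frac\der{\der c_i}$ has no $\frac\der{\der c_m}$-component and $c_m$-independent coefficients, hence it is $p$-related to the field of the same name on $\C^{m-1}$. Therefore, by the chain rule, $d\frak m_0\big|_{\mu^J(\tilde c,c_m)}$ applied to $\frac{\der\mu^J}{\der\Gamma_{r,\tilde J}}(\tilde c,c_m)$ equals the derivative of $\frak m_0\circ\mu^J$ along $\Gamma_{r,\tilde J}$, which by the above factorization equals the derivative of $\frak m_0\circ\mu^{\tilde J}$ along $\Gamma_{r,\tilde J}$ at $\tilde c$, that is, $d\frak m_0\big|_{\mu^{\tilde J}(\tilde c)}\bigl(\frac{\der\mu^{\tilde J}}{\der\Gamma_{r,\tilde J}}(\tilde c)\bigr)$. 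By the induction hypothesis \Ref{fromula main m-1} this in turn equals $d\frak m_0\big|_{\mu^{\tilde J}(\tilde c)}\bigl(\frac\der{\der t_r}\big|_{\mu^{\tilde J}(\tilde c)}\bigr)$. Now put $L_0=\frak m_0(\mu^J(\tilde c,c_m))$, which by Lemma \ref{lem j j'} also equals $\frak m_0(\mu^{\tilde J}(\tilde c))$. Since $\frak m_0$ carries the $r$-th mKdV vector field to the $r$-th KdV vector field (Theorem \ref{thm mkdvtokdv}, in the form recorded in \S\ref{sec statement}), the last expression equals $\frac\der{\der t_r}\big|_{L_0}$, and likewise $d\frak m_0\big|_{\mu^J(\tilde c,c_m)}\bigl(\frac\der{\der t_r}\big|_{\mu^J(\tilde c,c_m)}\bigr)=\frac\der{\der t_r}\big|_{L_0}$. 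Thus
\[
d\frak m_0\big|_{\mu^J(\tilde c,c_m)}\Bigl(\frac\der{\der t_r}\big|_{\mu^J(\tilde c,c_m)}\Bigr)=\frac\der{\der t_r}\big|_{L_0}=d\frak m_0\big|_{\mu^J(\tilde c,c_m)}\Bigl(\frac{\der\mu^J}{\der\Gamma_{r,\tilde J}}(\tilde c,c_m)\Bigr),
\]
and subtracting gives exactly \Ref{formula for ker j_m=1}. The case $j_m=0$ runs verbatim with $\frak m_1$ in place of $\frak m_0$, using the other half of Lemma \ref{lem j j'}, and yields \Ref{formula for ker j_m=0}.

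\textbf{Main obstacle.} The argument is essentially formal, and the only point I expect to require care is the chain-rule step: one must verify that $\Gamma_{r,\tilde J}$ genuinely descends along the projection $p$ (no dependence on $c_m$ in its coefficients, no $\frac\der{\der c_m}$ component), so that differentiating the $c_m$-independent map $\frak m_0\circ\mu^J$ along it on $\C^m$ coincides with differentiating $\frak m_0\circ\mu^{\tilde J}$ along $\Gamma_{r,\tilde J}$ on $\C^{m-1}$. Once Lemma \ref{lem j j'}, Theorem \ref{thm mkdvtokdv}, and the inductive identity \Ref{fromula main m-1} are in hand, there is no further difficulty.
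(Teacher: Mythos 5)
Your argument is correct and is exactly the one the paper intends: the paper's proof is only a pointer to Lemma 5.5 of \cite{VW}, whose proof is precisely your computation that both vectors are sent by $d\frak m_i$ to the same KdV vector $\frac\der{\der t_r}\big|_{L_i}$, using Lemma \ref{lem j j'}, the induction hypothesis \Ref{fromula main m-1}, and the intertwining property from Theorem \ref{thm mkdvtokdv}. Your care about $\Ga_{r,{\tilde J}}$ descending along the projection $p$ is the right point to check, and it holds since its coefficients depend only on $\tilde c$ and it has no $\frac\der{\der c_m}$ component.
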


\begin{proof}
The proof of this lemma is the same as the proof of Lemma 5.5 in \cite{VW}.
\end{proof}

Let $j_m=1$. By Lemma \ref{lem ker 1}, the vector $\frac{\der}{\der t_r}\big|_{\mu^J({\tilde c},c_m)} -
\frac{\der\mu^J}{\der \Ga_{r,{\tilde J}}}({\tilde c},c_m)$ lies in the kernel of
the map $d\frak m_0\big|_{\mu^J({\tilde c},c_m)}$. By Lemma \ref{lem kernel}, this kernel is generated
by  $\frac{y_0(x,\tilde c,m-1)}{y_1(x,\tilde c,c_m)^2}h_0$. By Lemma \ref{lem der c-m},
we have
$\frac{\der \mu^J}{\der c_m}({\tilde c},c_m)  = a
\frac{y_0(x,{\tilde c},m-1)}
{y_1(x,{\tilde c},c_m,m)^2} \,h_{0}
$ for some $a\in\C^\times$.  Hence there exists a number $\gamma_m(\tilde c,c_m)$ such that
$\frac{\der}{\der t_r}\big|_{\mu^J(\tilde c,c_m)} = \Ga_{r,\tilde J}\big|_{\tilde c} + \gamma_m(\tilde c,c_m)\frac\der{\der c_m}$.

Let $j_m=0$. By Lemma \ref{lem ker 1}, the vector $\frac{\der}{\der t_r}\big|_{\mu^J({\tilde c},c_m)} -
\frac{\der\mu^J}{\der \Ga_{r,{\tilde J}}}({\tilde c},c_m)$ lies in the kernel of
the map $d\frak m_1\big|_{\mu^J({\tilde c},c_m)}$. By Lemma \ref{lem kernel}, this kernel is generated
by the polynomial $\frac{y_1(x,\tilde c,m-1)^4}{y_0(x,\tilde c,c_m)^2}h_0$. By Lemma \ref{lem der c-m},
we have
$\frac{\der \mu^J}{\der c_m}({\tilde c},c_m)  = a
\frac{y_{1}(x,{\tilde c},m-1)^4}
{y_{0}(x,{\tilde c},c_m,m)^2} \,h_{0}
$ for some $a\in\C^\times$.  Hence there exists a number $\gamma_m(\tilde c,c_m)$ such that
$\frac{\der}{\der t_r}\big|_{\mu^J(\tilde c,c_m)} = \Ga_{r,\tilde J}\big|_{\tilde c} + \gamma_m(\tilde c,c_m)\frac\der{\der c_m}$.

\begin{prop}
\label{prop polyn}
The function $\ga_{m}({\tilde c},c_m)$ is a polynomial on $\C^m$.

\end{prop}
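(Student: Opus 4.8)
The plan is to produce an explicit formula $\ga_m=R/D$ with $R\in\C[x,\tilde c,c_m]$ and $D$ a polynomial that is monic in $x$ (up to a nonzero scalar), and then to read off polynomiality of $\ga_m$ from the top coefficient in $x$. First I would set up a recursion for the $t_r$-flow along the generation. By \Ref{T} we have $e^{-\ad U^J(c)}=e^{\ad g_m f_{j_m}}\circ e^{-\ad U^{\tilde J}(\tilde c)}$, because $g_\ell$ does not involve $c_m$ for $\ell<m$; and in the matrix realization of Section \ref{Realizations of g} each factor $e^{\ad g_\ell f_{j_\ell}}$ is, by Lemma \ref{lem exp}, a \emph{polynomial} of degree $\le 2$ in $g_\ell$ with coefficients in $\frak{gl}_3[\la,\la^{-1}]$. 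Hence $\Phi:=e^{-\ad U^{\tilde J}(\tilde c)}(\La_r)$ has every homogeneous component $\Phi^j\in\B(\g^j)$ equal to a polynomial in $g_1,\dots,g_{m-1}$ with $x$-independent coefficients. Writing $\Phi^0=P_{\tilde J}\,h_0$ and $(\ad f_{j_m})^k(\Phi^k)=\theta_k\,h_0$, and applying $e^{\ad g_m f_{j_m}}=\sum_{k\ge0}\tfrac1{k!}g_m^k(\ad f_{j_m})^k$ to $\Phi$, one gets $\big(e^{-\ad U^J(c)}(\La_r)\big)^0=\big(P_{\tilde J}+\sum_{k\ge1}\tfrac1{k!}g_m^k\theta_k\big)h_0$, where $P_{\tilde J}$ and the $\theta_k$ are polynomials in $g_1,\dots,g_{m-1}$. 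Combining this with \Ref{oper2}, Corollary \ref{cor der}, the vanishing $\der g_\ell/\der c_m=0$ for $\ell<m$, and the induction hypothesis \Ref{fromula main m-1} for $\tilde J$ (which cancels the $P_{\tilde J}$-terms against $\Ga_{r,\tilde J}$), and writing $h_{j_m}=\epsilon_m h_0$ with $\epsilon_m\in\{1,-2\}$, I obtain the scalar identity
\be
\epsilon_m\,\ga_m\,\frac{\der g_m}{\der c_m}\ =\ \frac{\der}{\der x}\Big(\sum_{k\ge1}\frac1{k!}\,g_m^k\,\theta_k\Big)\ -\ \epsilon_m\sum_{i=1}^{m-1}\ga_i(\tilde c)\,\frac{\der g_m}{\der c_i}\,.
\ee

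Next I would feed in the explicit shape of $g_m$ (take $j_m=0$; the case $j_m=1$ is symmetric). Put $p=y_0(x,\tilde c,c_m,m)$, $q=y_0(x,\tilde c,m-1)$, $s=y_1(x,\tilde c,m-1)$. Since $p=y_{0,0}+c_m q$ and $\Wr(q,y_{0,0})$ is a nonzero constant multiple of $s^4$ by \Ref{tilde y0}, bilinearity of the Wronskian gives $g_m=\ln'(p)-\ln'(q)=\Wr(q,p)/(pq)=a\,s^4/(pq)$ with $a\in\C^\times$, while $\der g_m/\der c_m$ is a nonzero constant multiple of $s^4/p^2$ by Lemma \ref{lem der c-m}. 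All of $g_1,\dots,g_{m-1}$ have denominators dividing $Q:=\prod_{\ell=1}^{m-1}y_0(x,\tilde c,\ell)\,y_1(x,\tilde c,\ell)$, hence so do the $\theta_k$; and $p,q,s,Q$ are monic in $x$ with coefficients in $\C[\tilde c,c_m]$, since the generation procedure for $\tilde J$ produces monic polynomials depending polynomially on $\tilde c$ (construction of Section \ref{sec generation procedure}, cf. Lemma \ref{lem gen procedure}), and $\ga_i\in\C[\tilde c]$ by the induction hypothesis. Clearing denominators in the displayed identity and using $\der g_m/\der c_m=(\text{const})\,s^4/p^2$ on the left (which cancels a factor of $p^2$), one obtains
\be
\ga_m\ =\ \frac{R(x,\tilde c,c_m)}{D(x,\tilde c,c_m)}\,,\qquad R\in\C[x,\tilde c,c_m],\quad D=c_0\,s^4\,p^{\alpha}q^{\beta}Q^{\gamma},
\ee
for some $c_0\in\C^\times$ and integers $\alpha,\beta,\gamma\ge0$; in particular $D/c_0$ is monic in $x$.

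Finally, the left-hand side of the last display does not depend on $x$ while $D/c_0$ is monic in $x$, so $R=\ga_m(\tilde c,c_m)\,D$ as polynomials in $x$; comparing the coefficients of the top power of $x$ gives $c_0\,\ga_m(\tilde c,c_m)=(\text{leading coefficient in }x\text{ of }R)\in\C[\tilde c,c_m]$, so $\ga_m$ is a polynomial on $\C^m$. The main obstacle is the bookkeeping of the third step: one must verify that after clearing denominators the numerator $R$ genuinely lies in the polynomial ring $\C[x,\tilde c,c_m]$. This rests on (a) the exponentials being polynomial in the $g_\ell$ (Lemma \ref{lem exp}), so that $P_{\tilde J}$ and the $\theta_k$ are polynomials in $g_1,\dots,g_{m-1}$ with $x$-independent coefficients, and (b) the generation polynomials for $\tilde J$ being monic with coefficients in $\C[\tilde c]$, so that all the denominators occurring are monic in $x$ up to constants; together with $\ga_i\in\C[\tilde c]$ from the inductive hypothesis. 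The remaining steps are formal; note in particular that one does not need to assume a priori that $\ga_m$ is rational, as the displayed formula produces that for free.
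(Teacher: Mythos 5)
Your proof is correct and follows essentially the same route as the paper's: both identify $\ga_m$ with a leading coefficient at $x=\infty$ of the identity that defines it, using that all generation polynomials are monic in $x$ with coefficients polynomial in $c$ --- the paper extracts the first nonzero Laurent coefficient, while you clear denominators and compare top coefficients in $x$, which is the same mechanism. The only substantive difference is that your explicit recursion $e^{-\ad U^J(c)}=e^{\ad g_m f_{j_m}}\circ e^{-\ad U^{\tilde J}(\tilde c)}$ combined with Lemma \ref{lem exp} supplies the detail behind the Laurent-expansion lemmas that the paper states without proof in Section \ref{sec proof prop ind}.
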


\begin{proof}

The proof is similar to the proof of Proposition 5.9 in \cite{VW}.

Let $g = x^d + \sum_{i=0}^{d-1} A_i(c_1,\dots,c_m) x^i$ be a polynomial in $x,c_1,\dots,c_m$. Denote
$h = \ln'g$ the logarithmic derivative of $g$ with respect to $x$. Consider the Laurent expansion
of $h$ at $x=\infty$, \
$h = \sum_{i=1}^\infty B_i(c_1,\dots,c_m) x^{-i}$.

\begin{lem}
\label{ lem Laurent}
All coefficients $B_i$ are polynomials in $c_1,\dots,c_m$.
\qed
\end{lem}

The vector $Y = \frac{\der}{\der t_r}\big|_{\mu^J({\tilde c},c_m)}$ is a $3\times 3$ diagonal matrix depending
on $x,c_1,\dots,c_m$, $Y=Y_1(e_{1,1}-e_{3,3})$ where $Y_1$ is a scalar function.

\begin{lem}
\label{ lem  dt Laurent}
The function $Y_1$ is a rational function in $x,c_1,\dots,c_m$ which  has a Laurent expansion of the form
$Y_1 = \sum_{i=1}^\infty B_i(c_1,\dots,c_m) x^{-i}$ where all coefficients $B_i$ are polynomials in $c_1,\dots,c_m$.
\qed
\end{lem}

The vector $Y = \frac{\der \mu^J}{ \der\Ga_{r,\tilde J} }({\tilde c},c_m)$ is a $3\times 3$ diagonal matrix depending
on $x,c_1,\dots,c_m$, $Y=Y_1(e_{1,1}-e_{3,3})$ where $Y_1$ is a scalar function.

\begin{lem}
\label{ lem  mu Laurent}
The function $Y_1$ is a rational function of $x,c_1,\dots,c_m$ which  has a Laurent expansion of the form
$Y_1 = \sum_{i=1}^\infty B_i(c_1,\dots,c_m) x^{-i}$ where all coefficients $B_i$ are polynomials in $c_1,\dots,c_m$.
\qed
\end{lem}

Let us finish the proof of Proposition \ref{prop polyn}. The function  $\ga_{m}({\tilde c},c_m)$ is determined from the
equation
\bea
\frac{\der}{\der t_r}\big|_{\mu^J({\tilde c},c_m)} -
\frac{\der\mu^J}{\der \Ga_{r,{\tilde J}}}({\tilde c},c_m) = a_1 \ga_{m}({\tilde c},c_m)\frac{y_0(x,{\tilde c}, m-1)}
{y_{1}(x,{\tilde c},c_m,m)^2}\, h_0
\eea
if $j_m=1$ and from the equation
\bea
\frac{\der}{\der t_r}\big|_{\mu^J({\tilde c},c_m)} -
\frac{\der\mu^J}{\der \Ga_{r,{\tilde J}}}({\tilde c},c_m) = a_2 \ga_{m}({\tilde c},c_m)\frac{y_1(x,{\tilde c}, m-1)^4}
{y_0(x,{\tilde c},c_m,m)^2}\, h_0
\eea
if $j_m=0$. Here $a_1,a_2$ are nonzero complex numbers independent of $\tilde c,c_m$.

The function  $\frac{y_0(x,{\tilde c}, m-1)}
{y_{1}(x,{\tilde c},c_m,m)^2}$ has the Laurent expansion of the form $\sum_{i=1}^\infty B_i(c_1,\dots,c_m) x^{-i}$ and the first nonzero coefficient
$B_i$ of this expansion is 1 since the polynomials $y_0, y_1$ are monic polynomials. Hence $\ga_m$ is a polynomial if $j_m=1$.
Similarly, the function  $\frac{y_1(x,{\tilde c}, m-1)^4}
{y_0(x,{\tilde c},c_m,m)^2}$ has the Laurent expansion of the form $\sum_{i=1}^\infty B_i(c_1,\dots,c_m) x^{-i}$ and the first nonzero coefficient
$B_i$ of this expansion is 1 since the polynomials $y_0, y_1$ are monic polynomials. Hence $\ga_m$ is a polynomial if $j_m=0$.
\end{proof}

Theorem \ref{thm main} is proved.

\subsection{Critical points and the population generated from $y^\emptyset$}

\begin{thm}
[\cite{MV3}] If a pair of polynomials $(y_0,y_2)$ represents a critical point of the master function
\Ref{Master} for some parameters $k=(k_0,k_1)$, then $(y_0,y_1)$ is a point
of the population of pairs  generated  from $y^\emptyset$.

\end{thm}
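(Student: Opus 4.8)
The plan is to argue by induction on the total degree $d=k_0+k_1$, peeling off one generation step at each stage. The base case $d=0$ is immediate: then $(y_0,y_1)=(1,1)=y^\emptyset$, which is a point of the population by construction.

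For the inductive step, let $(y_0,y_1)$ be a generic pair representing a critical point with parameters $\bs k=(k_0,k_1)\ne(0,0)$. By Theorem \ref{fertile cor}(i) the pair is fertile, so equation \Ref{Wr-Cr} has a polynomial solution for both $j$. Recall from Section \ref{Degree increasing generation} that for each $j\in\{0,1\}$ the smallest-degree solution $\tilde y_j$ of $\Wr(y_j,\tilde y_j)=\prod_{i\ne j}y_i^{-a_{i,j}}$ has degree equal to the $j$-th component of $w_j\bs k$, and that this degree is either strictly greater than $k_j$ (the generation in direction $j$ is degree increasing) or strictly smaller (degree decreasing). The first point to establish is that, for $\bs k\ne(0,0)$ admitting a critical point, at least one direction is degree decreasing: the only vectors for which neither direction decreases are those with $k_0\in\{2k_1-1,2k_1\}$, and for these I would check directly from the Bethe equations \Ref{Bethe eqn 1} that no critical point exists (equivalently, that the set of $\bs k$ admitting a critical point is exactly the shifted Weyl orbit of $(0,0)$, i.e.\ the collection of vectors $\bs k^J$ arising in Section \ref{sec generation procedure}); the case $k_0=2k_1-1$ is easiest since then $\bs k$ is fixed by $w_1$, a ``wall'' vector. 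Fix a degree-decreasing direction $j$, let $\tilde y_j$ be the primitive solution, and let $y'$ be the pair obtained from $(y_0,y_1)$ by replacing $y_j$ with $\tilde y_j$; then $y'$ has parameter vector $\bs k'=w_j\bs k$ with $k'_0+k'_1<d$, and $(y_0,y_1)$ is an immediate descendant of $y'$ in the degree-increasing direction $j$.

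The heart of the matter is to see that $y'$ again represents a critical point. Fertility of $y'$ in the $j$-th direction is built in, since $\Wr(\tilde y_j,y_j)=\pm\prod_{i\ne j}y_i^{-a_{i,j}}$; and fertility of $y'$ in the remaining direction is also automatic by Theorem \ref{fertile cor}(ii), because $(y_0,y_1)$ is a critical point and $y'$ is the member at parameter $c=0$ of the one-parameter family of immediate descendants of $y'$ in direction $j$, so it is fertile as an immediate descendant. What is \emph{not} automatic is genericity of $y'$: the degree count of Section \ref{Degree increasing generation} permits $\tilde y_j$ to acquire multiple roots, precisely at its common points with $y_{1-j}$, so $y'$ may be a non-generic fertile pair. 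I expect this to be the main obstacle. I would handle it in one of two ways: (a) strengthen the inductive statement to assert that \emph{every} fertile pair of degree type $\bs k^J$ lies in the image of $Y^J$, together with a proof that $Y^J(\C^m)$ is closed in $(\C[x])^2$ — using the limit assertion Theorem \ref{fertile cor}(iv) and the injectivity of $Y^J$ in Lemma \ref{lem uniqeness} — so that a non-generic $y'$, obtained as a limit of generic descendants, still lies in the population; or (b) invoke the identification from \cite{MV1,MV2} of the population of $y^\emptyset$ with the flag variety of the Langlands-dual Kac--Moody algebra, which already accounts for the non-generic points.

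Granting that $y'$ is a point of the population, write $y'=Y^{\tilde J}(\tilde c)$ for a basic sequence $\tilde J=(j_1,\dots,j_{m-1})$ and $\tilde c\in\C^{m-1}$. Since the generation from $y'$ in direction $j$ is degree increasing and $\bs k'=w_j\bs k\ne\bs k$, the index $j$ differs from $j_{m-1}$, so $J:=(j_1,\dots,j_{m-1},j)$ is again a basic sequence. Finally, $(y_0,y_1)$ lies in the one-parameter families \Ref{tilde y0} and \Ref{tilde y1} of immediate descendants of $y'$ in direction $j$, out of which the map $Y^J$ is built; hence $(y_0,y_1)=Y^J(\tilde c,c_m)$ for a suitable $c_m\in\C$, and $(y_0,y_1)$ is a point of the population. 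This completes the induction.
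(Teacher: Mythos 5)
First, a point of comparison: the paper does not prove this theorem at all --- it is quoted from the reference [MV3], which is listed as ``in preparation''. So there is no in-paper argument to measure your proposal against; I can only assess the proposal on its own terms. Your overall strategy (downward induction on $k_0+k_1$, peeling off one generation step by finding a degree-decreasing direction) is a natural one and is the standard way such statements are approached in the Mukhin--Varchenko framework, but as written it has two genuine gaps.

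The first gap is the claim that no critical point of \Ref{Master} exists when $k_0\in\{2k_1-1,2k_1\}$ and $\bs k\ne(0,0)$. This is exactly the assertion that the admissible degree vectors lie in the shifted Weyl orbit of $(0,0)$, which is a nontrivial part of the theorem itself; verifying it ``directly from the Bethe equations \Ref{Bethe eqn 1}'' is easy for $(1,1)$ or $(2,1)$ but you give no argument that works for all $k_1$, and without it the induction cannot even start its descent. The second and more serious gap is the one you yourself flag: the degree-decreased pair $y'$ need not be generic, so Theorem \ref{fertile cor}(i) does not let you conclude that $y'$ represents a critical point, and the inductive hypothesis (stated only for critical points) does not apply to it. Neither proposed repair is carried out: repair (a) replaces the theorem by the strictly stronger statement that every fertile pair of degree $\bs k^J$ lies in $Y^J(\C^m)$ and additionally asserts, without proof, that $Y^J(\C^m)$ is closed in $\PCN$ (the image of a polynomial map need not be closed, and Theorem \ref{fertile cor}(iv) by itself only produces \emph{some} population containing the limit, not membership in the image of the same $Y^J$); repair (b) simply defers to the flag-variety theorem of [MV1, MV2], i.e.\ to machinery of essentially the same depth as the statement being proved. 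Until one of these routes is actually executed, the inductive step is not established.
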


\bigskip

\end{document}